\numberwithin{equation}{section}
\newtheorem{thm}{Theorem}[section]
\newtheorem{lem}[thm]{Lemma}
\newtheorem{prop}[thm]{Proposition}
\newtheorem{cor}[thm]{Corollary}
\newtheorem{remark}[thm]{Remark}
\theoremstyle{definition}
\newtheorem{definition}[thm]{Definition}
\newtheorem{example}[thm]{Example}
\newenvironment{rem}{\begin{remark}\rm}{\end{remark}}
\title{Stabilities of affine Legendrian submanifolds and their moduli spaces}
\author{Kotaro Kawai
\footnote{The author is supported by Grant-in-Aid for JSPS fellows (26-7067).}}
\date{}
\begin{document}

\maketitle

\begin{abstract}
We introduce the notion of 
affine Legendrian submanifolds 
in Sasakian manifolds 
and define a canonical volume called the $\phi$-volume 
as 
odd dimensional analogues of affine Lagrangian (totally real or purely real) geometry. 
Then 
we derive the second 
variation formula of the $\phi$-volume 
to obtain the stability result 
in some $\eta$-Einstein Sasakian manifolds. 
It also implies the convexity of the $\phi$-volume functional 
on the space of affine Legendrian submanifolds.

Next, we introduce the notion of 
special affine Legendrian submanifolds in Sasaki-Einstein manifolds 
as a generalization of that of special Legendrian submanifolds. 
Then we show that 
the moduli space 
of compact connected special affine Legendrian submanifolds 
is a smooth Fr\'{e}chet manifold. 
\end{abstract}

\section{Introduction}

Affine Lagrangian (totally real or purely real) 
submanifolds are 
``maximally non-complex" submanifolds 
in almost complex manifolds 
defined by relaxing the Lagrangian condition (Definition \ref{def of aff Lag}). 
The affine Lagrangian condition is an open condition 
and hence there are many examples. 
Borrelli \cite{Borrelli} 
defined a canonical volume of an affine Lagrangian submanifold
called the $J$-volume. 
He obtained the stability result for the $J$-volume 
as in the Lagrangian case \cite{Chen}. 
Lotay and Pacini \cite{LotayPacini} pointed out the importance of 
affine Lagrangian submanifolds 
in the study of geometric flows. 
Opozda \cite{Opozda} showed that 
the moduli space of (special) affine Lagrangian submanifolds 
was a smooth Fr\'{e}chet manifold.

In this paper, 
we study the odd dimensional analogue. 
First, we introduce the notion of 
affine Legendrian submanifolds 
in Sasakian manifolds 
and 
define a canonical volume called the $\phi$-volume 
as 
odd dimensional analogues of affine Lagrangian geometry. 
See Definitions \ref{def of aff Leg} and \ref{def of phi vol}. 
Then we compute the first variation of the $\phi$-volume  
and characterize 
a critical point for the $\phi$-volume 
by the vanishing of some vector field $H_{\phi}$ (Proposition \ref{1st var phi Vol}), 
which is a generalization of the mean curvature vector (Remark \ref{MC in Leg case}). 
We call an affine Legendrian submanifold {\bf $\phi$-minimal} if $H_{\phi} = 0$. 
Then we compute the second variation of the $\phi$-volume 
and obtain the following.

\begin{thm} \label{2nd var phi Vol}
Let $(M^{2n+1}, g, \eta, \xi, \phi)$ be a 
$(2n+1)$-dimensional Sasakian manifold 
and $\iota : L^{n} \hookrightarrow M$ 
be an affine Legendrian immersion 
of a compact oriented $n$-dimensional manifold $L$. 
Let  
$\iota_{t} : L \hookrightarrow M$ 
be a one-parameter family of affine Legendrian immersions 
satisfying $\iota_{0} = \iota$. 
Suppose that 
$\frac{\partial \iota_{t}}{\partial t}|_{t=0} = Z
= \phi Y + f \xi$, 
where $Y \in \mathfrak{X}(L)$ is a vector field on $L$ 
and $f \in C^{\infty}(L)$ is a smooth function.
Then we have 
\begin{align*}
\left. \frac{d^{2}}{d t^{2}} 
\int_{L} {\rm vol}_{\phi} [\iota_{t}] \right|_{t=0} 
= &
\int_{L} 
\left(
(2n+2) \eta (Y)^{2} - 2 g(Y, Y) - {\rm Ric}(Y, Y)  \right. \\
&
\left. 
- g(\pi_{L}[Z, Y], H_{\phi}) 
+ g(Y, H_{\phi})^{2}
+ 
\left (
\frac{{\rm div}(\rho_{\phi} [\iota] Y)}{\rho_{\phi} [\iota]}
\right)^{2}
\right)
{\rm vol}_{\phi}[\iota],
\end{align*}
where 
${\rm vol}_{\phi}[\iota]$ is the $\phi$-volume form of $\iota$ given in Definition \ref{def of phi vol}, 
${\rm Ric}$ is the Ricci curvature of $(M,g)$, 
$\pi_{L}: \iota^{*}TM \rightarrow \iota_{*} TL$ 
is the canonical projection given in (\ref{canonical proj}), 
$\rho_{\phi} [\iota]$ is the function on $L$ given in Definition \ref{def of phi vol} and 
$H_{\phi}$ is the vector field on $L$ given in Definition \ref{def of H phi}. 
\end{thm}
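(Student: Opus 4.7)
The plan is to derive the second variation by differentiating the first variation formula of Proposition \ref{1st var phi Vol} one more time along the family $\iota_t$. That formula expresses $\frac{d}{dt}\int_L \text{vol}_\phi[\iota_t]$ as an integral over $L$ of a linear functional in the variation field $Z_t = \partial_t \iota_t = \phi Y_t + f_t \xi$, with $H_\phi[\iota_t]$ appearing as the coefficient pairing with $Y_t$. Differentiating once more at $t=0$ produces three kinds of contributions: the acceleration $\nabla_t Z_t|_{t=0}$, the linearization of the nonlinear operator $\iota \mapsto H_\phi[\iota]$ in the direction $Z$, and the first variation of $\text{vol}_\phi[\iota_t]$ paired against the first variation of the integrand. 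The acceleration piece is handled by writing $\nabla_t Z_t|_{t=0} = \nabla_Z Z$ and decomposing $\nabla_Z Z$ into its $TL$- and normal components using the Sasakian identities $\nabla_X \xi = -\phi X$ and $(\nabla_X \phi)(W) = g(X,W)\xi - \eta(W) X$; rewriting the tangential projection in terms of $[Z, Y]$ yields the term $-g(\pi_L[Z, Y], H_\phi)$.

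The central technical calculation is the linearization of $H_\phi$ at $\iota$ along $Z$. I would work in a local orthonormal frame $\{e_i\}$ on $L$ and express $\rho_\phi[\iota]$ as a determinant of Gram-type pairings between $\phi(\iota_* e_i)$ and a basis of the complement of $\iota_* TL$ inside $\phi(\iota_* TL) \oplus \mathbb{R}\xi$, so that $H_\phi$ emerges as the Euler--Lagrange operator for this determinant. Differentiating twice in the family parameter, second covariant derivatives of $Y$ along $L$ appear, and commuting them introduces the Riemann tensor of $M$ contracted with the tangent frame. Applying the standard Sasakian curvature identities $R(X,Y)\xi = \eta(Y) X - \eta(X) Y$ and $R(X,\xi)\xi = X - \eta(X)\xi$, together with $\text{Ric}(\xi, \cdot) = 2n\, \eta(\cdot)$, allows one to re-express $\sum_i g(R(Y, \iota_* e_i) \iota_* e_i, Y)$ in terms of $\text{Ric}(Y, Y)$ modulo corrections proportional to $g(Y, Y)$ and $\eta(Y)^2$; combining all corrections produces the stated expression $-\text{Ric}(Y, Y) - 2 g(Y, Y) + (2n+2)\eta(Y)^2$.

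The two ``kinetic'' quadratic terms $g(Y, H_\phi)^2$ and $(\text{div}(\rho_\phi Y)/\rho_\phi)^2$ should arise from squaring the first-order variation of $\log \rho_\phi$ along the family, which splits naturally into an $H_\phi$-coupled part and a pure divergence part. The Leibniz rule $\partial_t^2(\rho_\phi\, \text{vol}_{g_L})|_{t=0} = \bigl(\partial_t^2 \log \rho_\phi + (\partial_t \log \rho_\phi)^2\bigr)\, \rho_\phi\,\text{vol}_{g_L} + \cdots$ produces both squares, with the second derivative absorbed into the curvature computation above.

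The main obstacle will be the precise algebraic bookkeeping in linearizing $H_\phi$: tracking how the $f\xi$ component of $Z$ couples with $Y$ through $\nabla \xi = -\phi$ and $\nabla \phi$, and verifying the specific coefficient $(2n+2)$, which mixes a full Ricci contraction over $TM$ with purely algebraic contributions from the Sasakian structure. A sensible strategy is to first carry out the calculation assuming $f = 0$ to extract the curvature/structural terms, then add the $f\xi$-contribution, and finally check diffeomorphism-invariance of the final expression under reparametrizations of $L$ as a consistency check.
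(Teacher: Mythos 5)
Your overall strategy---obtaining the second variation by differentiating the first variation formula $\frac{d}{dt}\mathrm{Vol}_{\phi}[\iota_{t}] = -\int_{L} g(Y_{t}, H_{\phi}[\iota_{t}])\,\mathrm{vol}_{\phi}[\iota_{t}]$ once more, which forces you to linearize the operator $\iota \mapsto H_{\phi}[\iota]$---is conceptually legitimate and genuinely different from the paper's route. The paper never linearizes $H_{\phi}$: it differentiates the determinant expression for $\rho_{\phi}(t)$ twice directly, organizes the result into eleven terms $h_{1},\dots,h_{11}$, simplifies each with the Sasakian structure equations and the dual coframe $\{e^{i}, f^{i}, \eta^{*}\}$ (Proposition \ref{2nd var phi vol form}), and only then substitutes $Z = \phi Y + f\xi$ and integrates by parts. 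Your route would, if carried out, package the same information differently, but the linearization of $H_{\phi}$ is the entire computational content of the theorem and your proposal leaves it as a promissory note.

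The concrete gap is in your treatment of the curvature term. You propose to re-express $\sum_{i} g(R(Y, \iota_{*}e_{i})\iota_{*}e_{i}, Y)$ in terms of $\mathrm{Ric}(Y,Y)$ ``modulo corrections proportional to $g(Y,Y)$ and $\eta(Y)^{2}$'' using only the identities $R(X,Y)\xi = \eta(Y)X - \eta(X)Y$ and $\mathrm{Ric}(\xi,\cdot)$. This cannot work: the frame $\{e_{i}\}$ spans only the $n$-dimensional tangent space of $L$ inside the $(2n+1)$-dimensional $TM$, so the partial trace over the $e_{i}$ alone is not determined by $\mathrm{Ric}$, $g$ and $\eta$. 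The missing $\phi e_{i}$-directions must be converted into tangential directions via the Sasakian identity $R(X,Y)(\phi Z) = \phi(R(X,Y)Z) - g(Y,Z)\phi X + g(\phi X,Z)Y + g(X,Z)\phi Y - g(\phi Y,Z)X$ (the fourth equation of Lemma \ref{Sasaki eq2}), and the Ricci contraction must be taken against the \emph{dual} coframe $\{e^{i}, f^{i}, \eta^{*}\}$, not the metric, since for an affine Legendrian (non-Legendrian) immersion the frame $\{e_{i}, \phi e_{i}, \xi\}$ is not orthonormal. This is exactly what Lemma \ref{computation 2nd var 1} does: it is the \emph{combination} $\sum_{i} e^{i}(R(Z,e_{i})Z + R(Y,e_{i})Y)$, with $Z = \phi Y + f\xi$ supplying the $\phi$- and $\xi$-directions, that equals $-\mathrm{Ric}(Y,Y) + g(Y,Y) + n\eta(Y)^{2} - nf^{2}$. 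Without invoking the $R(X,Y)\phi Z$ identity and the dual-coframe contraction, your sketch cannot produce the coefficients $-2$ and $2n+2$, and the ``consistency check'' of diffeomorphism invariance will not detect this error since the wrong curvature expression would still be reparametrization-invariant.
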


\begin{rem}
For Legendrian submanifolds, 
the $\phi$-volume agrees with the standard Riemannian volume (Lemma \ref{equality rho phi}). 
When $\iota$ is minimal Legendrian and all of $\iota_{t}$'s are Legendrian, 
Theorem \ref{2nd var phi Vol} agrees with \cite[Theorem 1.1]{Ono}. 
When $\iota$ is Legendrian-minimal Legendrian 
and all of $\iota_{t}$'s are Legendrian, 
Theorem \ref{2nd var phi Vol} agrees with \cite[Theorem 1.1]{Kajigaya}. 
See Remark \ref{relation Lmin}. 
\end{rem}

Following the Riemannian case, 
we call a $\phi$-minimal affine Legendrian submanifold {\bf $\phi$-stable} 
if the second variation of the $\phi$-volume is nonnegative.

Now, suppose that 
a $(2n+1)$-dimensional Sasakian manifold $(M^{2n+1}, g, \eta, \xi, \phi)$ 
is a $\eta$-Einstein with the 
$\eta$-Ricci constant $A \in \mathbb{R}$. 
(See Definition \ref{def of eta Einstein}.)
Then we obtain the following.

\begin{thm} \label{thm stablity}
Let $(M^{2n+1}, g, \eta, \xi, \phi)$ be a $(2n+1)$-dimensional 
$\eta$-Einstein Sasakian manifold with the 
$\eta$-Ricci constant $A \leq -2$. 
Then any $\phi$-minimal affine Legendrian submanifold in $M$ is 
$\phi$-stable. 
\end{thm}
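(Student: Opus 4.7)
The plan is to apply Theorem \ref{2nd var phi Vol} directly, using the hypothesis $H_{\phi} = 0$ to discard two of the terms, and then to simplify the remaining curvature-dependent integrand using the $\eta$-Einstein assumption. After setting $H_{\phi} = 0$, the second variation integrand reduces to
\[
(2n+2)\eta(Y)^{2} - 2g(Y,Y) - {\rm Ric}(Y,Y) + \left(\frac{{\rm div}(\rho_{\phi}[\iota] Y)}{\rho_{\phi}[\iota]}\right)^{2}.
\]
The last term is a pointwise square, so it suffices to show that the first three terms combine to a nonnegative expression under the hypothesis $A \leq -2$.

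For this I would use two standard facts about Sasakian geometry. First, on any Sasakian manifold one has ${\rm Ric}(X,\xi) = 2n\,\eta(X)$; combining this with the $\eta$-Einstein ansatz ${\rm Ric} = A g + B\,\eta \otimes \eta$, which I expect is the content of Definition \ref{def of eta Einstein}, forces $B = 2n - A$ by evaluating at $X = Y = \xi$. Hence along $L$,
\[
{\rm Ric}(Y,Y) = A\,g(Y,Y) + (2n - A)\,\eta(Y)^{2}.
\]
Substituting and collecting the $\eta(Y)^{2}$ and $g(Y,Y)$ coefficients, the three curvature terms collapse to
\[
(A+2)\eta(Y)^{2} - (A+2)g(Y,Y) = -(A+2)\bigl(g(Y,Y) - \eta(Y)^{2}\bigr).
\]

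Finally, since $\xi$ is a unit vector field and $\eta(\,\cdot\,) = g(\xi, \cdot)$, the Cauchy--Schwarz inequality gives $\eta(Y)^{2} = g(Y,\xi)^{2} \leq g(Y,Y)$ pointwise, so $g(Y,Y) - \eta(Y)^{2} \geq 0$. The hypothesis $A \leq -2$ gives $-(A+2) \geq 0$, so both summands of the integrand are pointwise nonnegative, and integrating yields $\phi$-stability.

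The whole argument is essentially algebraic once Theorem \ref{2nd var phi Vol} is in hand; the only mild subtlety is checking the $\eta$-Einstein normalization $B = 2n - A$, which is forced by the Sasakian identity ${\rm Ric}(\,\cdot\,,\xi) = 2n\,\eta(\,\cdot\,)$. I do not see a genuine analytic obstacle beyond this bookkeeping.
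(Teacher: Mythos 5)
Your proposal is correct and follows essentially the same route as the paper: set $H_{\phi}=0$, use the $\eta$-Einstein identity to collapse the curvature terms to $(A+2)\bigl(\eta(Y)^{2}-g(Y,Y)\bigr)$ (the paper's equation (\ref{eta Einstein eq}), with the normalization $B=2n-A$ already built into Definition \ref{def of eta Einstein}), and conclude pointwise nonnegativity from $\eta(Y)^{2}\leq g(Y,Y)$ together with the square term. The only cosmetic difference is that the paper deduces $\eta(Y)^{2}\leq g(Y,Y)$ from the compatibility identity $g(\phi Y,\phi Y)=g(Y,Y)-\eta(Y)^{2}\geq 0$ rather than Cauchy--Schwarz, which is equivalent.
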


This is a generalization of \cite[Theorem 1.2]{Ono}. 
The author obtained further results 
by restricting variations of a minimal Legendrian submanifold to its Legendrian variations. 
In our case, since the affine Legendrian condition is an open condition, 
any small variation is affine Legendrian. 
Thus there is no analogue of these results.

Similarly, using the notion of convexity 
in the space of affine Legendrian submanifolds (Definition \ref{def of convex}), 
we easily see the following. 

\begin{thm} \label{thm convexity}
Let $(M^{2n+1}, g, \eta, \xi, \phi)$ be a $(2n+1)$-dimensional 
$\eta$-Einstein Sasakian manifold with the 
$\eta$-Ricci constant $A \leq -2$. 
Then the $\phi$-volume functional 
on the space of affine Legendrian submanifolds is convex. 
\end{thm}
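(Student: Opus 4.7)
The plan is to deduce Theorem \ref{thm convexity} directly from the second variation formula of Theorem \ref{2nd var phi Vol}, applied along a convex combination of affine Legendrian submanifolds in the sense of Definition \ref{def of convex}. The strategy parallels the proof of Theorem \ref{thm stablity} but must work without assuming $\phi$-minimality, since convexity is a pointwise statement on the whole space of affine Legendrian submanifolds rather than just at critical points. Concretely, I would show that each of the three groups of terms in the second variation integrand has a definite sign along a convex path.

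First I would handle the curvature piece. Using that $(M, g, \eta, \xi, \phi)$ is $\eta$-Einstein with constant $A$, together with the standard Sasakian identity $\mathrm{Ric}(\xi, \cdot) = 2n\, \eta$, one may write $\mathrm{Ric} = A\, g + (2n - A)\, \eta \otimes \eta$. Substituting into the integrand gives
\[
(2n+2)\eta(Y)^{2} - 2 g(Y, Y) - \mathrm{Ric}(Y, Y) = -(A+2)\bigl(g(Y,Y) - \eta(Y)^{2}\bigr),
\]
and by the Cauchy--Schwarz inequality $g(Y,Y) - \eta(Y)^{2} \geq 0$, so the hypothesis $A \leq -2$ makes this combination nonnegative pointwise. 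The term $\bigl(\mathrm{div}(\rho_{\phi}[\iota]\, Y)/\rho_{\phi}[\iota]\bigr)^{2}$ is manifestly nonnegative as well, leaving only the mixed contributions
\[
- g(\pi_{L}[Z, Y], H_{\phi}) + g(Y, H_{\phi})^{2}
\]
to be controlled.

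This last piece is precisely where Definition \ref{def of convex} has to be used: along a convex path the variation field $Z = \phi Y + f \xi$ should be compatible with $Y$ in such a way that the bracket $\pi_{L}[Z,Y]$ either vanishes or combines with $g(Y, H_{\phi})^{2}$ into a nonnegative remainder. In the analogous affine Lagrangian setting of Opozda this is achieved by fixing the deformation field in a suitable trivialization, and I expect the present definition to be the Sasakian counterpart. The main obstacle is therefore to verify this last step: extracting the correct sign for the $H_{\phi}$-coupling without the $\phi$-minimality assumption. Once this matches the structure of Definition \ref{def of convex}, summing the three nonnegative contributions and integrating over $L$ yields $\frac{d^{2}}{dt^{2}} \int_{L} \mathrm{vol}_{\phi}[\iota_{t}] \geq 0$, which is exactly the convexity assertion.
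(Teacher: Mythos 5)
Your approach is the same as the paper's: apply Theorem \ref{2nd var phi Vol} along a geodesic, use the $\eta$-Einstein identity to rewrite the curvature terms as $(A+2)\bigl(\eta(Y)^{2}-g(Y,Y)\bigr)\geq 0$, and observe that the divergence term is a square. Your computation of the curvature piece is exactly the paper's equation (\ref{eta Einstein eq}), and your remark that $\phi$-minimality cannot be assumed is the right instinct.

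The one step you flag as open --- controlling $-g(\pi_{L}[Z,Y],H_{\phi})+g(Y,H_{\phi})^{2}$ --- is resolved more simply than you anticipate, and the resolution is already recorded in Lemma \ref{def of geodesic}. Convexity in Definition \ref{def of convex} is tested only along geodesics of $\mathcal{A}$, and a geodesic is by definition generated by a \emph{fixed} $Y\in\mathfrak{X}(L)$ and $f\in C^{\infty}(L)$ with $\frac{d\iota_{t}}{dt}=\phi(\iota_{t})_{*}Y+f\,\xi\circ\iota_{t}$; the surface $(s,t)\mapsto\iota_{t}(x(s))$ built from an integral curve of $Y$ then shows that $[(\iota_{t})_{*}Y,\,\phi(\iota_{t})_{*}Y+f\,\xi\circ\iota_{t}]=0$ for all $t$. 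Hence $\pi_{L}[Z,Y]=0$ identically along the geodesic: the bracket term vanishes outright and $g(Y,H_{\phi})^{2}\geq 0$ needs no pairing. (There is no cancellation to engineer between the two terms, contrary to what your phrasing suggests.) Since Theorem \ref{2nd var phi Vol} holds at an arbitrary affine Legendrian base point, this argument applies at every time $t$ along the geodesic, not just at $t=0$, which is what convexity requires. One terminological caution: the relevant structure is the geodesic/connection on $\mathcal{A}$ from Section 3.3, not a ``convex combination'' of submanifolds; the latter has no meaning here.
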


For affine Legendrian submanifolds in a $\eta$-Einstein Sasakian manifold with the 
$\eta$-Ricci constant $A > -2$, 
we have the following.

\begin{thm} \label{thm obstruction}
Let $(M^{2n+1}, g, \eta, \xi, \phi)$ be a $(2n+1)$-dimensional 
$\eta$-Einstein Sasakian manifold with the 
$\eta$-Ricci constant $A > -2$. 
Then there are no 
$\phi$-minimal affine Legendrian submanifolds 
which are $\phi$-stable.
\end{thm}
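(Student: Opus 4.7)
The plan is to rule out $\phi$-stability by exhibiting a concrete affine Legendrian variation whose second variation is strictly negative. Since the affine Legendrian condition is open, any normal perturbation of the form $Z = \phi Y$ (i.e.\ $f = 0$) with $Y \in \mathfrak{X}(L)$ remains affine Legendrian for sufficiently small $t$, so it suffices to pick $Y$ cleverly.

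The first step is to simplify the formula of Theorem \ref{2nd var phi Vol} at a $\phi$-minimal point. With $H_\phi = 0$ the terms $g(\pi_L[Z, Y], H_\phi)$ and $g(Y, H_\phi)^2$ drop out. Next, insert the $\eta$-Einstein ansatz $\mathrm{Ric} = A g + B \eta \otimes \eta$: the standard Sasakian identity $\mathrm{Ric}(\xi, \xi) = 2n$ forces $B = 2n - A$, and a short manipulation gives
\[
(2n+2)\eta(Y)^2 - 2g(Y,Y) - \mathrm{Ric}(Y,Y) = -(A+2)\bigl(g(Y,Y) - \eta(Y)^2\bigr),
\]
so the second variation collapses to
\[
-(A+2)\int_L \bigl(g(Y,Y) - \eta(Y)^2\bigr)\, \mathrm{vol}_\phi \;+\; \int_L \Bigl(\frac{\mathrm{div}(\rho_\phi Y)}{\rho_\phi}\Bigr)^2 \mathrm{vol}_\phi.
\]

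Under the hypothesis $A > -2$, Cauchy--Schwarz applied to $\eta(Y) = g(\xi, Y)$ with $g(\xi, \xi) = 1$ makes the first integrand nonpositive, with strict pointwise inequality wherever $Y \neq 0$: the equality case $Y \parallel \xi$ is ruled out because the affine Legendrian condition forces $\mathbb{R}\xi \cap \iota_* TL = 0$. The final step is to kill the nonnegative divergence term by choosing $Y$ to preserve the measure $\rho_\phi\, \mathrm{vol}_\phi$, i.e.\ $\mathrm{div}(\rho_\phi Y) = 0$. Such nontrivial $Y$ are abundant: for $n \geq 2$ take any $(n-2)$-form $\beta$ with $d\beta \neq 0$ and define $Y$ by $Y \lrcorner (\rho_\phi \mathrm{vol}_\phi) = d\beta$, which yields a nonzero vector field with $\mathcal{L}_Y(\rho_\phi \mathrm{vol}_\phi) = d(d\beta) = 0$; the $n=1$ case is handled by rescaling on $L = S^1$. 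With such a $Y$ the second variation equals $-(A+2)\int_L (g(Y,Y) - \eta(Y)^2)\, \mathrm{vol}_\phi < 0$, contradicting $\phi$-stability.

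I expect no serious obstacle: the argument is a direct application of Theorem \ref{2nd var phi Vol} combined with the $\eta$-Einstein identity and the standard existence of measure-preserving vector fields. The main care is in unpacking the $\eta$-Einstein relation correctly and in noting that the affine Legendrian definition enforces the pointwise transversality $\mathbb{R}\xi \cap \iota_* TL = 0$ used in the Cauchy--Schwarz step.
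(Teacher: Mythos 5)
Your proposal is correct and follows essentially the same route as the paper: specialize Theorem \ref{2nd var phi Vol} at a $\phi$-minimal point, use the $\eta$-Einstein identity to reduce the integrand to $(A+2)\left(\eta(Y)^{2}-g(Y,Y)\right)$ plus the divergence square, observe strict negativity off $Y=0$ from the transversality $\mathbb{R}\xi\cap\iota_{*}TL=0$, and then choose $Y$ with ${\rm div}(\rho_{\phi}[\iota]Y)=0$. The only (cosmetic) difference is that you build such a $Y$ by contracting an exact $(n-1)$-form into the volume form, while the paper takes a nonzero coclosed $1$-form $\alpha$ and sets $\iota^{*}g(\rho_{\phi}[\iota]Y,\cdot)=\alpha$ --- Hodge-dual descriptions of the same construction (note the small slip: $Y$ should preserve ${\rm vol}_{\phi}[\iota]=\rho_{\phi}[\iota]\,{\rm vol}_{\iota^{*}g}$, not ``$\rho_{\phi}\,{\rm vol}_{\phi}$'').
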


Next, 
we define a special affine Legendrian submanifold in 
a Sasaki-Einstein manifold 
with a Calabi-Yau structure on its cone
by requiring that its cone is special affine Lagrangian 
(Definition \ref{def of special aff Leg}). 
This notion is a generalization of that of special Legendrian submanifolds. 
By a slight generalization of the general deformation theory of 
Moriyama \cite[Proposition 2.2]{Moriyama}, 
we 
study the moduli space of special affine Legendrian submanifolds
and obtain the following.

\begin{thm} \label{smooth moduli affine Leg}
Let $M$ be a Sasakian manifold with 
a Calabi-Yau structure on its cone 
and $L$ be a compact connected manifold 
admitting a special affine Legendrian embedding $L \hookrightarrow M$. 
Then 
the moduli space 
of special affine Legendrian embeddings of $L$ 
is an infinite dimensional smooth Fr\'{e}chet manifold 
modeled on the Fr\'{e}chet vector space 
$\{ (g, \alpha) \in C^{\infty}(L) \oplus \Omega^{1}(L); (n+1)g + d^{*} \alpha = 0 \} \cong \Omega^{1}(L)$, 
which is identified with 
the direct sum of 
the space of functions with integral $0$ and that of coclosed 1-forms. 
It is a submanifold of the 
moduli space of smooth affine Legendrian embeddings of $L$.
\end{thm}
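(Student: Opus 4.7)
The plan is to exploit the cone construction---special affine Legendrian in $M$ corresponds to special affine Lagrangian in the Calabi-Yau cone $C(M)$---and then adapt the affine Lagrangian moduli space arguments of Opozda \cite{Opozda}, invoking the tame Fr\'echet implicit function theorem of Moriyama \cite[Proposition 2.2]{Moriyama}.

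First I would parametrize nearby embeddings modulo $\mathrm{Diff}(L)$. Because $\iota$ is affine Legendrian, the pullback $\iota^*TM$ splits pointwise as $\iota_*TL \oplus \phi(\iota_*TL) \oplus \mathbb{R}\xi$, so a tubular neighborhood argument shows that every $C^1$-small unparametrized deformation has a unique representative of the form $\iota_{(Y,f)} = \exp_\iota(\phi Y + f\xi)$ with $(Y,f) \in \mathfrak{X}(L)\oplus C^\infty(L)$. Since the affine Legendrian condition is open, all such $(Y,f)$ in a neighborhood of $0$ give affine Legendrian embeddings; this identifies a neighborhood of $[\iota]$ in the moduli space of smooth affine Legendrian embeddings with an open subset of $\mathfrak{X}(L)\oplus C^\infty(L)$.

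Next I would encode the special condition as the zero set of a smooth tame map. Let $\Omega_C$ denote the holomorphic $(n+1,0)$-form on $C(M)$. Then $\iota_{(Y,f)}$ is special affine Legendrian precisely when the pullback of $\mathrm{Im}(\Omega_C)$ along the cone of $\iota_{(Y,f)}$, normalized by the $\phi$-volume form, vanishes. This defines a smooth map
\[
F : U \subset \mathfrak{X}(L) \oplus C^\infty(L) \to C^\infty(L),
\]
whose zero locus is the desired moduli slice. A computation using the Sasaki-Einstein structure and the first-variation machinery already developed in the paper should yield
\[
dF|_0 (Y, f) \;=\; c \bigl((n+1) f + d^* \alpha\bigr), \qquad \alpha := Y^\flat,
\]
for some nonzero constant $c$, where $Y^\flat$ is the metric dual of $Y$ on $L$. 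This matches the claimed kernel.

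Finally I would apply Moriyama's Proposition 2.2, extended in the mild way needed to accommodate the product deformation space $\mathfrak{X}(L)\oplus C^\infty(L)$. The linearization $dF|_0$ admits the manifestly tame right inverse $g \mapsto (0, g/(n+1))$, so the hypotheses hold and $F^{-1}(0)$ is a smooth Fr\'echet submanifold of $\mathfrak{X}(L)\oplus C^\infty(L)$ with tangent space $\{(Y,f): (n+1)f + d^* Y^\flat = 0\}$. The second description follows from Hodge theory: integrating $(n+1)f + d^* \alpha = 0$ forces $\int_L f\,\mathrm{vol} = 0$, and Hodge decomposition of $\alpha$ into $d\beta + \gamma$ (with $d^*\gamma = 0$) identifies the kernel with $\Omega^1(L)$ via $(f,\alpha) \mapsto \alpha$.

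The main obstacle I anticipate is the linearization computation in the third step: assembling the cone Calabi-Yau geometry, the $\phi$-volume normalization, and the contribution of the non-normal $\mathfrak{X}(L)$-component of $\phi Y + f\xi$ will require careful bookkeeping, and the precise factor $(n+1)$ is where the Sasaki-Einstein (Ricci) condition must enter. A secondary technical point is verifying that Moriyama's argument, formulated for sections of a single bundle, transfers cleanly to the pair $(Y,f)$ and to the pulled-back normal data from the cone.
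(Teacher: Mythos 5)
Your proposal follows essentially the same route as the paper: the same transversal slice $\phi\iota_{*}TL\oplus\mathbb{R}\xi$, the same defining map (vanishing of $*\exp_{v}^{*}\mathrm{Im}\,\psi$), the same linearization $-(n+1)g-d^{*}\alpha$ after identifying $\phi\iota_{*}Y+f\xi$ with $(\rho_{\phi}[\iota]f,\ \iota^{*}g(\rho_{\phi}[\iota]Y,\cdot))$, and the same appeal to a slight generalization of Moriyama's Proposition 2.2 followed by the Hodge decomposition. The linearization you defer goes through exactly as you anticipate, via Cartan's formula and the identity $d\psi=-(n+1)i\,\eta\wedge\psi$; the only caveat is that the hypothesis of the Moriyama-type proposition is not a tame right inverse at the base point but ellipticity of $D_{1}D_{1}^{*}(h)=(n+1)^{2}h+d^{*}dh$ together with $\mathrm{Im}(F)\subset\mathrm{Im}(D_{1})=C^{\infty}(L)$, both of which are immediate here.
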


\begin{rem}
Theorem \ref{smooth moduli affine Leg} shows 
the different property of 
the moduli space of special affine Legendrian submanifolds 
from 
that of special Legendrian submanifolds. 
In general, 
there are obstructions of special Legendrian deformations. 
See \cite[Section 4.2]{Moriyama}.
\end{rem}

This paper is organized as follows. 
In Section 2, we review the fundamental
facts of Sasakian geometry. 
In Section 3, 
we review affine Lagrangian geometry 
and introduce its odd dimensional analogue, 
namely, 
affine Legendrian geometry. 
In Section 4, we compute the first variation of the $\phi$-volume.
In Section 5, 
we compute the second variation of the $\phi$-volume 
to obtain Theorems \ref{2nd var phi Vol},  
\ref{thm stablity}, \ref{thm convexity} and \ref{thm obstruction}. 
In Section 6, 
we consider the $\phi$-volume in Sasaki-Einstein manifolds 
and introduce the notion of 
special affine Legendrian submanifolds. 
In Section 7, 
we study the moduli space of special affine Legendrian submanifolds 
and prove Theorem \ref{smooth moduli affine Leg}.

\noindent{{\bf Acknowledgements}}: 
The author owes a great intellectual debt to 
the work of Lotay and Pacini \cite{LotayPacini}, 
which motivates him to study the subject of this paper. 
He is grateful to Professor Takayuki Moriyama 
for letting me know his work and 
fruitful discussions. 
He also thanks the referee for 
useful comments on an earlier version of this paper.


\section{Sasakian geometry}

\begin{definition} \label{def of Sasakian 1}
Let $M^{2n + 1}$ be a $(2n + 1)$-dimensional manifold. 
Suppose that  
there exist 
a contact form $\eta$, 
a Riemannian metric $g$, a Killing vector field $\xi$
and a type (1, 1)-tensor $\phi$ on $M$. 
We call 
$(M, g, \eta, \xi, \phi)$
a {\bf Sasakian manifold} if 
we have 
\begin{align*}
\eta (\xi) &= 1,  \\
\phi^{2}   &= -id_{TM}+ \eta \otimes \xi, \\
g (\phi (X), \phi (Y)) &= g(X,Y) -\eta(X) \eta(Y), \\
d \eta                  &= 2 g (\cdot, \phi (\cdot) ), \\
[\phi,\phi](X, Y)+ d \eta (X, Y) \xi &= 0, 
\end{align*}
where 
$X, Y \in TM$, 
$d \eta (X,Y) = X \eta(Y) - Y \eta(X) - \eta([X, Y])$ and 
$[\phi, \phi](X, Y ) = 
\phi^{2} [X, Y] + [\phi(X),\phi(Y)] -
\phi [\phi(X),Y]-\phi [X, \phi(Y )]$. 
\end{definition}

We can also define a Sasakian manifold 
in terms of a Riemannian cone.

\begin{definition} \label{def of Sasakian 2}
An odd dimensional Riemannian manifold 
$(M, g)$ is a {\bf Sasakian manifold} if 
its Riemannian cone 
$(C(M), \bar{g}) = (\mathbb{R}_{>0} \times M, dr^{2} + r^{2} g)$ 
is a K\"ahler manifold with respect to some complex structure $J$ over $C(M)$. 
\end{definition}
Here, $r$ is a standard coordinate of $\mathbb{R}_{>0}$ and we regard $r$ as
the function on $C(M)$. 
We identify $M$ with the submanifold $\{1\} \times M \subset C(M)$. 

It is known that 
Definitions \ref{def of Sasakian 1} and \ref{def of Sasakian 2} are equivalent. 
From Definition \ref{def of Sasakian 2}, we see that 
Sasakian geometry is regarded as the odd-dimensional analogue of K\"{a}hler geometry.

Tensors in Definition \ref{def of Sasakian 1} 
are recovered as follows. 
Define the vector field $\tilde{\xi}$ and the 1-form $\tilde{\eta}$
on $C(M)$ by 
\begin{align*}
\tilde{\xi}  = - J \left( r\frac{\partial}{\partial r} \right), \qquad 
\tilde{\eta} = \frac{1}{r^{2}}\bar{g}(\tilde{\xi},\cdot)=\frac{dr \circ J}{r}=-2d^{c}_{J}{\rm log}r, 
\end{align*}
where $d^{c}_{J} f = -df \circ J/2 = i (\bar{\partial} - \partial)f/2$ for the function $f$ on $C(M)$.
Then we have 
\begin{align*}
\xi = \tilde{\xi}|_{r=1}, \qquad
\eta= \tilde{\eta}|_{r=1}.
\end{align*}
By the decomposition 
$TM = {\rm ker}\eta \oplus \mathbb{R}\xi$, 
the endomorphism $\phi \in C^{\infty}(M, {\rm End}(TM))$ is given by 
\begin{align*}
\phi|_{{\rm ker}\eta}=J|_{{\rm ker}\eta}, \qquad
\phi|_{\mathbb{R}\xi}=0. 
\end{align*}
The metric $g$ on $M$, $J|_{ \{ r=1 \} }$
and the K\"{a}hler form $\bar{\omega}$ of $\bar{g}$
are described as 
\begin{align*}
g         &= \frac{1}{2}d\eta (\phi(\cdot), \cdot)+\eta \otimes \eta, \\
J|_{ \{ r=1 \} }&= \phi - \xi \otimes dr + \frac{\partial}{\partial r} \otimes \eta, \\
\bar{\omega}&= \frac{1}{2}d(r^{2}\tilde{\eta})=-\frac{1}{2}dd^{c}_{J}r^{2}.
\end{align*}

We summarize basic equations in Sasakian geometry. 
See \cite[Section 2]{Ono}.

\begin{lem} \label{Sasaki eq1}
Let $(M, g, \eta, \xi, \phi)$ be a (2n+1)-dimensional Sasakian manifold. 
Then we have
\begin{align*}
\phi (\xi) =0, \qquad
\eta \circ \phi =0, \qquad
\eta = g(\xi, \cdot), \qquad
i (\xi) d \eta = 0, 
\end{align*}
where $i(\cdot )$ is an inner product. 
\end{lem}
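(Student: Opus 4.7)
The plan is to derive all four identities as elementary algebraic consequences of the defining relations in Definition \ref{def of Sasakian 1}, proceeding in an order in which each identity feeds the next.

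The starting observation is that $d\eta = 2g(\cdot, \phi \cdot)$, being antisymmetric, forces $\phi$ to be skew-symmetric with respect to $g$, i.e., $g(\phi X, Y) = -g(X, \phi Y)$. With this in hand I would first establish $\phi(\xi) = 0$: evaluating $\phi^{2} = -\mathrm{id}_{TM} + \eta \otimes \xi$ at $\xi$ and using $\eta(\xi) = 1$ gives $\phi^{2}\xi = 0$, and then $g(\phi \xi, \phi \xi) = -g(\phi^{2}\xi, \xi) = 0$, forcing $\phi \xi = 0$.

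Next I would prove $\eta \circ \phi = 0$ by computing $\phi^{3} X$ in two ways: applying $\phi^{2} = -\mathrm{id}_{TM} + \eta \otimes \xi$ to $\phi X$ gives $\phi^{3}X = -\phi X + \eta(\phi X)\xi$, while applying $\phi$ to $\phi^{2} X$ and using $\phi\xi = 0$ gives $\phi^{3}X = -\phi X$; comparing yields $\eta(\phi X)\xi = 0$, hence $\eta(\phi X) = 0$. For $\eta = g(\xi, \cdot)$, the idea is to specialize $g(\phi X, \phi Y) = g(X,Y) - \eta(X)\eta(Y)$ at $X = \xi$: the left side vanishes by $\phi\xi = 0$, and rearranging gives $g(\xi, Y) = \eta(Y)$ for all $Y$. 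Finally, $i(\xi)\, d\eta = 0$ is immediate from $d\eta(\xi, Y) = 2g(\xi, \phi Y) = -2g(\phi\xi, Y) = 0$.

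There is no genuine obstacle here; the only subtlety is the order of operations. Recognizing that skew-symmetry of $\phi$ with respect to $g$ is a free consequence of $d\eta = 2g(\cdot, \phi\cdot)$ is what lets $\phi\xi = 0$ drop out immediately, after which each remaining identity is a one-line substitution into the axioms. Notably, the Nijenhuis-type condition $[\phi,\phi] + d\eta\cdot \xi = 0$ and the Killing property of $\xi$ are not needed for this lemma.
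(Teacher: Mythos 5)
Your proof is correct. The paper itself gives no proof of this lemma, simply citing \cite[Section 2]{Ono}, so there is nothing to compare against; your derivation is a clean, self-contained verification from the axioms of Definition \ref{def of Sasakian 1}. Each step checks out: the skew-adjointness $g(\phi X, Y) = -g(X,\phi Y)$ does follow from the antisymmetry of $d\eta = 2g(\cdot,\phi(\cdot))$, the chain $\phi^{2}\xi = 0 \Rightarrow g(\phi\xi,\phi\xi) = -g(\phi^{2}\xi,\xi) = 0 \Rightarrow \phi\xi = 0$ is valid since $g$ is positive definite, and the remaining three identities follow by the one-line substitutions you describe; you are also right that neither the Nijenhuis condition nor the Killing property of $\xi$ is needed.
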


\begin{lem} \label{Sasaki eq2}
Let $(M, g, \eta, \xi, \phi)$ be a Sasakian manifold. 
Then we have
\begin{align*}
\nabla_{X} \xi =& - \phi (X), \\
(\nabla_{X} \phi) (Y) =& g(X, Y) \xi - \eta(Y) X, \\
R(X, Y)\xi =& \eta (Y) X - \eta(X) Y, \\
R(X, Y) (\phi (Z))
=&
\phi (R(X, Y)Z) - g(Y, Z) \phi(X) 
+g(\phi(X), Z) Y \\
&+ g(X, Z) \phi (Y)
- g(\phi (Y), Z) X, \\
{\rm Ric} (\xi, X) 
=& 
\left\{ \begin{array}{ll}
2n & (X=\xi) \\
0 & (X \in \ker \eta) \\
\end{array} \right.
\end{align*}
where $X, Y, Z \in \mathfrak{X}(M)$ are vector fields on $M$,  
$\nabla$ is the Levi-Civita connection of $(M, g)$, 
$R$ is the curvature tensor of $(M, g)$ 
and 
${\rm Ric}$ is the Ricci curvature tensor of $(M, g)$.
\end{lem}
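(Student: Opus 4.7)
The plan is to prove the five identities sequentially, each building on the previous ones. A preliminary observation is needed: since $d\eta(X,Y)=2g(X,\phi Y)$ is antisymmetric, $\phi$ is skew with respect to $g$, i.e. $g(\phi X,Y)=-g(X,\phi Y)$.

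For the first identity, I use that $\xi$ is Killing together with $\eta=g(\xi,\cdot)$, so $\nabla\xi$ (viewed as a $(1,1)$-tensor) is $g$-antisymmetric. Then
\[
2g(\nabla_X\xi,Y)=g(\nabla_X\xi,Y)-g(\nabla_Y\xi,X)=(\nabla_X\eta)(Y)-(\nabla_Y\eta)(X)=d\eta(X,Y)=-2g(\phi X,Y),
\]
which gives $\nabla_X\xi=-\phi X$.

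The technical core of the lemma is $(\nabla_X\phi)Y=g(X,Y)\xi-\eta(Y)X$. I expect this to be the main obstacle. The cleanest route is to exploit the K\"ahler cone: on $(C(M),\bar g,J)$ one has $\tilde\nabla J=0$, and the Gauss formula $\tilde\nabla_XY=\nabla_XY-g(X,Y)\partial_r$ combined with $\tilde\nabla_X\partial_r=X$, together with $JX=\phi X-\eta(X)\partial_r$ for $X$ tangent to $M$ and $J\partial_r=\xi$ at $r=1$, yields the formula after expanding $\tilde\nabla_X(JY)=J(\tilde\nabla_XY)$ and projecting onto $TM$ and $\mathbb{R}\partial_r$. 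A purely intrinsic alternative is to substitute $\nabla\xi=-\phi$ into the integrability condition $[\phi,\phi](X,Y)+d\eta(X,Y)\xi=0$, expand $[\phi,\phi]$ in terms of $\nabla\phi$ using torsion-freeness, and then separate symmetric and antisymmetric parts in $X,Y$; the $g$-skewness of $\phi$ together with $\phi^{2}=-\mathrm{id}+\eta\otimes\xi$ closes the system.

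The remaining three identities are then formal consequences. For $R(X,Y)\xi$, substitute $\nabla\xi=-\phi$ into the definition of curvature and simplify using torsion-freeness:
\[
R(X,Y)\xi=-(\nabla_X\phi)Y+(\nabla_Y\phi)X,
\]
and then the formula for $\nabla\phi$ collapses this to $\eta(Y)X-\eta(X)Y$. For the identity on $R(X,Y)\phi Z$, I would use the tensorial Ricci identity $R(X,Y)(\phi Z)-\phi(R(X,Y)Z)=(R(X,Y)\phi)Z$, write the right-hand side as $\nabla_X(\nabla_Y\phi)Z-\nabla_Y(\nabla_X\phi)Z-(\nabla_{[X,Y]}\phi)Z$, and substitute $(\nabla_X\phi)Y=g(X,Y)\xi-\eta(Y)X$; then $\nabla\xi=-\phi$ and $\nabla\eta(\cdot)=-g(\phi\cdot,\cdot)$ let each term be computed explicitly, and collecting them yields the five-term right-hand side.

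Finally, tracing $R(X,Y)\xi$ against an orthonormal frame $\{e_i\}_{i=1}^{2n+1}$ gives
\[
\mathrm{Ric}(X,\xi)=\sum_{i}g(R(e_i,X)\xi,e_i)=\sum_i\bigl(\eta(X)\,g(e_i,e_i)-\eta(e_i)g(X,e_i)\bigr)=(2n+1)\eta(X)-\eta(X)=2n\,\eta(X),
\]
so the stated case distinction follows from $\eta(\xi)=1$ and $\eta|_{\ker\eta}=0$. The only genuinely calculational step is the derivation of $\nabla\phi$; all subsequent identities are corollaries obtained by direct substitution.
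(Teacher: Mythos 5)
Your argument is correct in substance, and since the paper offers no proof of this lemma at all --- it is quoted as a list of standard Sasakian identities with a pointer to \cite{Ono} --- there is no in-paper argument to compare against; what you give is the standard derivation. The logical order (Killing property plus $d\eta=2g(\cdot,\phi\cdot)$ for $\nabla\xi$; the cone or the normality tensor for $\nabla\phi$; then $R(X,Y)\xi=-(\nabla_X\phi)Y+(\nabla_Y\phi)X$, the Ricci identity $(R(X,Y)\phi)Z=R(X,Y)\phi Z-\phi R(X,Y)Z$ computed from the second covariant derivative of $\nabla\phi=g\otimes\xi-\eta\otimes\mathrm{id}$, and the trace for $\mathrm{Ric}(\xi,\cdot)=2n\,\eta$) is exactly right, and I have checked that each substitution closes as you claim.

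One caveat on the cone computation for $\nabla\phi$: you take $JX=\phi X-\eta(X)\partial_r$ and $J\partial_r=\xi$, whereas the paper's conventions $\tilde\eta=(dr\circ J)/r$ and $\tilde\xi=-J(r\partial_r)$ force $JX=\phi X+\eta(X)\partial_r$ and $J\partial_r=-\xi$ at $r=1$. Your endomorphism agrees with the Kähler $J$ on $\ker\eta$ but is its negative on $\mathrm{span}\{\xi,\partial_r\}$, and that modified tensor is \emph{not} parallel; if you run the Gauss-formula computation literally with your signs you get $(\nabla_X\phi)Y=-g(X,Y)\xi+\eta(Y)X$, whose normal component also contradicts your own (correct) $\nabla_X\xi=-\phi X$. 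With the paper's signs the same two-line computation yields the stated identity in both its tangential and normal components, so this is a sign slip in recalling the convention rather than a gap in the method; your intrinsic alternative via $[\phi,\phi]+d\eta\otimes\xi=0$ is also viable and is the classical route.
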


Note that when $(M, g)$ is Einstein, the scalar curvature is equal to $2n(2n+1)$.

\begin{definition} \label{def of eta Einstein}
A $(2n+1)$-dimensional Sasakian manifold $(M^{2n+1}, g, \eta, \xi, \phi)$ 
is called {\bf $\eta$-Einstein} if we have 
\begin{align*}
{\rm Ric} = A g + (2n-A) \eta \otimes \eta 
\end{align*}
for some $A \in \mathbb{R}$. 
The constant 
$A$ is called the {\bf $\eta$-Ricci constant}. 
\end{definition}

This condition is necessary to prove Theorems \ref{thm stablity} and \ref{thm convexity}.
Note that $g$ is Einstein if $A=2n$.

Let $L$ be an $n$-dimensional manifold admitting an immersion
$\iota: L \hookrightarrow M$ into a $(2n+1)$-dimensional Sasakian manifold. 
The immersion $\iota$ induces the immersion 
\begin{align} \label{induced imm}
\bar{\iota}: C(L) = \mathbb{R}_{>0} \times L
\ni (r, x) \mapsto
(r, \iota(x))
\in 
\mathbb{R}_{>0} \times M = C(M).
\end{align}

\begin{definition}
An immersion $\iota: L \hookrightarrow M$ is {\bf Legendrian}
if $\iota^{*} \eta = 0$. 
This is equivalent to the condition that 
the induced immersion $\bar{\iota}:  C(L) \hookrightarrow C(M)$ given by (\ref{induced imm}) 
is Lagrangian: $\bar{\iota}^{*} \bar{\omega} = 0$. 
\end{definition}

\section{Affine Lagrangian and affine Legendrian submanifolds}

First, we review affine Lagrangian geometry by \cite{Borrelli, LotayPacini}.

\subsection{Affine Lagrangian submanifolds}

Let $(X, h, J, \omega)$ be a real $2n$-dimensional 
almost Hermitian manifold, 
where $h$ is a Hermitian metric, 
$J$ is an almost complex structure 
and $\omega$ is an associated K\"{a}hler form. 
Let $N$ be an oriented $n$-dimensional manifold 
admitting 
an immersion $f: N \hookrightarrow X$.

\begin{definition} \label{def of aff Lag}
An immersion $f$ is called {\bf affine Lagrangian} if 
\begin{align} \label{def aff Lag decomp}
T_{f (x)} X = f_{*} (T_{x}N) \oplus J f_{*} (T_{x}N)
\end{align}
for any $x \in N$.
\end{definition}

\begin{rem}
If $N$ is Lagrangian (i.e. $f^{*} \omega = 0$), (\ref{def aff Lag decomp}) 
is an orthogonal decomposition. 
The affine Lagrangian condition does not require 
the orthogonality of the decomposition (\ref{def aff Lag decomp}). 
\end{rem}

\begin{rem}
The affine Lagrangian condition is an open condition. 
The metric is not needed in the definition, 
and hence we can define affine Lagrangian submanifolds 
in an almost complex manifold. 
\end{rem}

Next, 
we define a $J$-volume 
introduced by Borrelli \cite{Borrelli}.

\begin{definition} \label{def of J vol}
Let $f: N^{n} \hookrightarrow X^{2n}$ be 
an affine Lagrangian immersion. 
Define the 
{\bf $J$-volume form} ${\rm vol}_{J} [f]$ of $f$, 
which is the $n$-form on $N$, 
by 
\begin{align*}
{\rm vol}_{J} [f] = \rho_{J} [f] {\rm vol}_{f^{*} h}, 
\end{align*}
where 
${\rm vol}_{f^{*} h}$ is the Riemannian volume form of $f^{*} h$ and 
the function $\rho_{J} [f]$ on $N$ is defined by 
\begin{align*}
\rho_{J}[f](x) = 
\sqrt
{{\rm vol}_{h}(f_{*} e_{1}, \cdots, f_{*} e_{n}, J f_{*} e_{1}, \cdots, J f_{*} e_{n})}
\end{align*}
for $x \in N$ 
and $\{e_{1},\cdots, e_{n} \}$ is an orthonormal basis of $T_{x} N$.

When $N$ is compact, define the 
{\bf $J$-volume} ${\rm Vol}_{J} [f]$ of $N$ by 
\begin{align*}
{\rm Vol}_{J} [f] = \int_{N} {\rm vol}_{J} [f].
\end{align*}
\end{definition}

\begin{rem}
The 
definition of the $J$-volume form ${\rm vol}_{J} [f]$ 
is independent of the choice of the Hermitian metric $h$.
See \cite[Section 3.2, 4.1]{LotayPacini}. 
Thus the $J$-volume is also defined in an almost complex manifold. 
\end{rem}

By definition, the following is easy to prove and we omit the proof.

\begin{lem} \label{equality rho J}
We have $0 \leq \rho_{J}[f] \leq 1$. 
The equality $\rho_{J}[f] = 1$ holds if and only if $f$ is Lagrangian. 
The equality $\rho_{J}[f] = 0$ holds if and only if $f$ is partially complex, 
namely, $f_{*} T_{x} N$ contains a complex line for any $x \in N$. 
\end{lem}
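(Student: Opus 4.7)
The plan is to reduce the lemma to a pointwise Hadamard inequality for a Gram determinant. Fix $x \in N$ and set $v_i := f_*e_i$ and $V := f_*T_xN$. Since $\{e_i\}$ is $f^*h$-orthonormal and $f^*h(e_i,e_j) = h(v_i,v_j)$, the family $\{v_i\}_{i=1}^n$ is an $h$-orthonormal basis of $V$; since $J$ is an $h$-isometry (from $h(JX,JY) = h(X,Y)$), $\{Jv_i\}_{i=1}^n$ is an $h$-orthonormal basis of $JV$. Writing the concatenated list $(u_1,\ldots,u_{2n}) := (v_1,\ldots,v_n,Jv_1,\ldots,Jv_n)$, one has
$${\rm vol}_h(u_1,\ldots,u_{2n})^2 = \det G, \qquad G = \begin{pmatrix} I_n & B \\ B^T & I_n \end{pmatrix}, \quad B_{ij} := h(v_i, Jv_j),$$
so that $\rho_J[f](x)^4 = \det G$.

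Hadamard's inequality for Gram matrices gives $0 \le \det G \le \prod_i G_{ii} = 1$, hence $0 \le \rho_J[f](x) \le 1$ pointwise. Equality in Hadamard's inequality requires the $u_i$ to be pairwise $h$-orthogonal; since $v_i \perp v_j$ and $Jv_i \perp Jv_j$ already hold, the only remaining constraint is $B = 0$. Using $h(v_i, Jv_j) = -\omega(v_i, v_j)$ (which follows from $h(JX,JY) = h(X,Y)$ and $\omega(X,Y) = h(JX,Y)$), this is precisely $f^*\omega = 0$ at $x$, the Lagrangian condition. For the lower equality, $\det G = 0$ iff $\{u_i\}$ is $\mathbb{R}$-linearly dependent, i.e., $\sum a_i v_i + \sum b_i Jv_i = 0$ for some $(a_i,b_i)\ne 0$. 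Setting $w := \sum b_i v_i \in V$, this becomes $Jw = -\sum a_i v_i \in V$; if $w = 0$ then all $a_i = 0$, contradicting nontriviality, so $w \ne 0$ and $\mathrm{span}_{\mathbb{R}}(w,Jw) \subset V$ is a $J$-invariant real $2$-plane, i.e., $V$ contains a complex line. The converse is immediate by reversing the argument, and passing from these pointwise equivalences to the global statements in the lemma is immediate.

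The main nuance I anticipate is the orientation convention needed so that ${\rm vol}_h(v_1,\ldots,Jv_n)$ is nonnegative, making the outer square root in the definition of $\rho_J[f]$ well-defined: on an almost Hermitian manifold the standard complex orientation makes the tuple $(v_1,\ldots,v_n,Jv_1,\ldots,Jv_n)$ positively oriented precisely on the open set where $V \oplus JV = T_{f(x)}X$, and the sign remains constant across this connected locus. Once this orientation point is settled, the proof reduces to a single application of Hadamard's inequality plus elementary linear algebra for its equality cases.
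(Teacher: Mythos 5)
Your proof is correct: the reduction to the Gram determinant $\det\begin{pmatrix} I_n & B \\ B^{T} & I_n\end{pmatrix}$ with $B_{ij}=h(v_i,Jv_j)$, Hadamard's inequality for the upper bound with equality iff $B=0$ (the Lagrangian condition), and linear dependence of $(v_1,\dots,v_n,Jv_1,\dots,Jv_n)$ for the vanishing case producing a $J$-invariant $2$-plane, is exactly the elementary argument the paper has in mind when it states the lemma "is easy to prove" and omits the proof. The only imprecision is your orientation remark: the positivity of ${\rm vol}_h(v_1,\dots,v_n,Jv_1,\dots,Jv_n)$ on the totally real locus is best justified pointwise (the real determinant of a complex-linear change of basis is $|\det_{\mathbb C}|^2>0$) or by connectedness of the set of totally real $n$-planes in the Grassmannian, not by connectedness of any locus in $N$; this affects only the well-definedness of the square root, not the claims of the lemma.
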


\begin{lem} \label{diff equiv aff Lag}
For any diffeomorphism $\varphi \in {\rm Diff}^{\infty}(N)$ of $N$, we have
\begin{align*}
{\rm vol}_{J} [f \circ \varphi] = \varphi^{*} {\rm vol}_{J} [f].
\end{align*}
Thus when $N$ is compact, we obtain
\begin{align*}
{\rm Vol}_{J} [f \circ \varphi] = {\rm Vol}_{J} [f].
\end{align*}
\end{lem}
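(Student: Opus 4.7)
The plan is to verify the naturality of both factors in the decomposition ${\rm vol}_J[f] = \rho_J[f]\,{\rm vol}_{f^*h}$ under pullback by $\varphi$, and then integrate.

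First I would fix a point $x \in N$ and choose an orthonormal basis $\{e_1,\ldots,e_n\}$ of $T_xN$ with respect to the metric $(f\circ\varphi)^*h$. Setting $v_i := d\varphi_x(e_i) \in T_{\varphi(x)} N$ and using the elementary identity $(f\circ\varphi)^*h = \varphi^*(f^*h)$, the system $\{v_1,\ldots,v_n\}$ is automatically orthonormal with respect to $f^*h$ at $\varphi(x)$. The chain rule $(f\circ\varphi)_* e_i = f_* v_i$ then plugs directly into the definition of $\rho_J$ to give
\[
\rho_J[f\circ\varphi](x) = \sqrt{{\rm vol}_h(f_*v_1,\ldots,f_*v_n,\, Jf_*v_1,\ldots, Jf_*v_n)} = \rho_J[f](\varphi(x)),
\]
that is, $\rho_J[f\circ\varphi] = \varphi^*\rho_J[f]$. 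Combined with the standard naturality of the Riemannian volume form under a pullback of metrics, ${\rm vol}_{(f\circ\varphi)^*h} = \varphi^*{\rm vol}_{f^*h}$, this yields
\[
{\rm vol}_J[f\circ\varphi] = \varphi^*\rho_J[f]\cdot \varphi^*{\rm vol}_{f^*h} = \varphi^*{\rm vol}_J[f].
\]

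For the second statement I would integrate this identity over the compact oriented manifold $N$ and invoke the change-of-variables formula for an orientation-preserving diffeomorphism to conclude ${\rm Vol}_J[f\circ\varphi] = \int_N \varphi^*{\rm vol}_J[f] = \int_N {\rm vol}_J[f] = {\rm Vol}_J[f]$. The argument is a mechanical naturality check and I do not anticipate any real obstacle; the only mild subtlety is keeping track of orientations in the definition of $\rho_J$ so that the quantity under the square root remains nonnegative, but this is guaranteed pointwise by the transversality in the affine Lagrangian condition and by using the canonical orientation on $X$ induced by $J$.
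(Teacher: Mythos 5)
Your proposal is correct and follows essentially the same route as the paper, which simply records the two identities $\rho_{J}[f \circ \varphi](x) = \rho_{J}[f](\varphi(x))$ and ${\rm vol}_{(f \circ \varphi)^{*} h} = \varphi^{*} {\rm vol}_{f^{*} h}$ and notes that they imply the statement; you have merely filled in the routine verification of these via the chain rule and change of variables (with the harmless implicit convention, shared by the paper, that $\varphi$ is orientation-preserving for the integrated claim).
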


\begin{proof}
We easily see that 
$\rho_{J}[f \circ \varphi] (x) = \rho_{J}[f] (\varphi (x))$, 
${\rm vol}_{(f \circ \varphi)^{*} h} = \varphi^{*} {\rm vol}_{f^{*} h}$, 
which imply the statement. 
\end{proof}

\subsection{Affine Legendrian submanifolds}

Next, we introduce the odd dimensional analogue of affine Lagrangian geometry, 
namely, affine Legendrian geometry. 
Let $(M,g, \eta, \xi, \phi)$ be a $(2n+1)$-dimensional Sasakian manifold 
and $L$ be an oriented $n$-dimensional manifold 
admitting 
an immersion $\iota: L \hookrightarrow M$.

\begin{definition} \label{def of aff Leg}
An immersion $\iota$ is called {\bf affine Legendrian} if 
\begin{align} \label{def aff Leg decomp}
T_{\iota (x)} M = \iota_{*} (T_{x}L) \oplus \phi \iota_{*} (T_{x}L) \oplus \mathbb{R} \xi_{\iota (x)}
\end{align}
for any $x \in L$.
\end{definition}

Then we can define canonical projections 
\begin{align}\label{canonical proj}
\pi_{L}:     T_{x} M \rightarrow \iota_{*} (T_{x}L), \qquad
\pi_{\phi}: T_{x} M \rightarrow \phi \iota_{*} (T_{x}L), \qquad
\pi_{\xi}:   T_{x} M \rightarrow \mathbb{R} \xi_{\iota (x)}.
\end{align}

\begin{rem}
If L is Legendrian (i.e. $\iota^{*} \eta = 0$), (\ref{def aff Leg decomp}) 
is an orthogonal decomposition. 
The affine Legendrian condition does not require 
the orthogonality of the decomposition (\ref{def aff Leg decomp}). 
\end{rem}

\begin{rem}
We can define affine Legendrian submanifolds 
in an almost contact manifold.
To simplify the computations, especially in Sections 4 and 5, 
we assume that $M$ is Sasakian. 
\end{rem}

By definition, we easily see the following.

\begin{rem}
An immersion $\iota: L \rightarrow M$ 
is affine Legendrian 
if and only if 
$\bar{\iota}: C(L) \rightarrow C(M)$ given by (\ref{induced imm})
is affine Lagrangian. 
\end{rem}

Next, 
we define the $\phi$-volume 
as an analogue of the $J$-volume. 
Recall that 
the Riemannian volume form ${\rm vol}_{\bar{\iota}^{*} \bar{g}}$ 
of $\bar{\iota}^{*} \bar{g}$ on $C(L)$
and 
the Riemannian volume form ${\rm vol}_{\iota^{*} g}$ 
of $\iota^{*} g$ on $L$
are related by 
$
{\rm vol}_{\bar{\iota}^{*} \bar{g}} = r^{n} dr \wedge {\rm vol}_{\iota^{*} g}.
$
As an analogue of this fact, 
we define the $\phi$-volume.

\begin{definition} \label{def of phi vol}
Let $\iota: L^{n} \hookrightarrow M^{2n+1}$ be 
an affine Legendrian immersion into 
a Sasakian manifold. 
Define the 
{\bf $\phi$-volume form} ${\rm vol}_{\phi} [\iota]$ of $\iota$, 
which is the $n$-form on $L$, 
by 
\begin{align*}
{\rm vol}_{J} [\bar{\iota}] = r^{n} dr \wedge {\rm vol}_{\phi} [\iota].
\end{align*}
When $L$ is compact, define the 
{\bf $\phi$-volume} ${\rm Vol}_{\phi} [\iota]$ of $L$ by 
\begin{align*}
{\rm Vol}_{\phi} [\iota] = \int_{L} {\rm vol}_{\phi} [\iota].
\end{align*}
\end{definition}

The $\phi$-volume form ${\rm vol}_{\phi} [\iota]$ is described 
more explicitly as follows. 
Define the function $\rho_{\phi}[\iota]$ on $L$ by
\begin{align*}
\rho_{\phi}[\iota](x) &= \rho_{J}[\bar{\iota}] (1, x)\\
&=
\sqrt
{{\rm vol}_{g}(\iota_{*} e_{1}, \cdots, \iota_{*} e_{n}, -\xi, \phi \iota_{*} e_{1}, \cdots, \phi \iota_{*} e_{n})} 
\end{align*}
for $x \in L$ and $\{e_{1},\cdots, e_{n} \}$ is an orthonormal basis of $T_{x} L$. 
Then we see that 
\begin{align*}
{\rm vol}_{\phi} [\iota] = \rho_{\phi} [\iota] {\rm vol}_{\iota^{*} g}.
\end{align*}

As in the affine Lagrangian case, we easily see the following.

\begin{rem}
The 
definition of the $\phi$-volume form ${\rm vol}_{\phi} [\iota]$ 
is independent of the choice of the Sasakian metric $g$.
\end{rem}

\begin{lem} \label{equality rho phi}
We have $0 \leq \rho_{\phi}[\iota] \leq 1$. 
The equality $\rho_{\phi}[\iota] = 1$ holds if and only if $\iota$ is Legendrian. 
The equality $\rho_{\phi}[\iota] = 0$ holds if and only if 
for any $x \in L$, there exists $0 \neq X \in T_{x} L$ such that 
$\iota_{*} X, \phi \iota_{*} X \in \iota_{*} T_{x} L$ 
or 
$\iota_{*} X$ or $\phi \iota_{*} X$ is a multiple of $\xi_{\iota (x)}$. 
\end{lem}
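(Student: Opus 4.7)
The plan is to reduce the statement to Lemma \ref{equality rho J} via the cone construction, using the identity $\rho_\phi[\iota](x)=\rho_J[\bar\iota](1,x)$ built into Definition \ref{def of phi vol}. The bounds $0\le\rho_\phi[\iota]\le 1$ then follow immediately from Lemma \ref{equality rho J}. Moreover, $\rho_\phi[\iota]\equiv 1$ is equivalent to $\rho_J[\bar\iota]\equiv 1$ at points of $\{1\}\times L$; by the $\mathbb{R}_{>0}$-invariance of $\rho_J[\bar\iota]$ on the cone (which holds since $\rho_J$ is conformally invariant and the cone scaling is conformal), this is the same as $\rho_J[\bar\iota]\equiv 1$ on all of $C(L)$, which by Lemma \ref{equality rho J} is equivalent to $\bar\iota$ being Lagrangian, hence to $\iota$ being Legendrian.

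For the characterization of $\rho_\phi[\iota](x)=0$, Lemma \ref{equality rho J} reduces matters to showing that $\bar\iota$ is partially complex at $(1,x)$, i.e., that there exists $0\neq v=a\partial_r+X\in T_{(1,x)}C(L)$ with $J\bar\iota_*v\in\bar\iota_*T_{(1,x)}C(L)=\mathbb{R}\partial_r\oplus\iota_*T_xL$. Using $J\partial_r=-\xi$ and $JY=\phi Y+\eta(Y)\partial_r$ for $Y\in TM$, one computes $J\bar\iota_*v=-a\xi+\phi\iota_*X+\eta(\iota_*X)\partial_r$; the $\partial_r$-component lies automatically in the target, so the condition reduces to $\phi\iota_*X-a\xi\in\iota_*T_xL$, i.e., there exist $(a,X)\ne 0$ and $X'\in T_xL$ with $\phi\iota_*X=\iota_*X'+a\xi$. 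A short case analysis then dispatches the easy subcases: $X=0$ forces $\iota_*X'\in\mathbb{R}\xi$ with $X'\ne 0$, giving (b); $X\ne 0$ and $X'=0$ gives (b) or (c) according as $a=0$ or $a\ne 0$; and $X,X'\ne 0$ with $a=0$ gives (a) with $X$ itself.

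The main obstacle is the remaining ``generic'' case $a,X,X'$ all nonzero. Here I would apply $\phi$ to the relation $\phi\iota_*X=\iota_*X'+a\xi$ and use $\phi^2=-{\rm id}+\eta\otimes\xi$ together with $\phi\xi=0$ to derive the companion equation $\phi\iota_*X'=-\iota_*X+\eta(\iota_*X)\xi$. Setting $\lambda:=\eta(\iota_*X)/a$ and $X'':=X'-\lambda X\in T_xL$, the two equations combine to give $\phi\iota_*X''=-\iota_*(X+\lambda X')\in\iota_*T_xL$, establishing condition (a). Nontriviality of $X''$ follows because $X''=0$ would imply $X'=\lambda X$ and hence $(\phi-\lambda\,{\rm id})\iota_*X=a\xi$; applying $\eta$ and using $\eta\circ\phi=0$ then forces $-\lambda\eta(\iota_*X)=a$, i.e., $\eta(\iota_*X)^2=-a^2$, a contradiction. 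The converse direction, producing a suitable $v$ from each of (a), (b), (c), is routine.
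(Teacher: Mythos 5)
Your proof is correct and follows exactly the route the paper intends: the paper omits the proof, remarking only that it follows ``as in the affine Lagrangian case,'' i.e.\ via the identity $\rho_{\phi}[\iota](x)=\rho_{J}[\bar{\iota}](1,x)$ and Lemma \ref{equality rho J} applied to the cone. Your case analysis of the relation $\phi\iota_{*}X=\iota_{*}X'+a\xi$, including the companion equation obtained by applying $\phi$ and the nontriviality check $\eta(\iota_{*}X)^{2}=-a^{2}$, correctly supplies the only nonobvious detail (the $\rho_{\phi}=0$ characterization) that the paper leaves to the reader.
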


\begin{lem} \label{diff equiv aff Leg}
For any diffeomorphism $\varphi \in {\rm Diff}^{\infty}(L)$ of $L$, we have
\begin{align*}
{\rm vol}_{\phi} [\iota \circ \varphi] = \varphi^{*} {\rm vol}_{\phi} [\iota].
\end{align*}
Thus when $L$ is compact, we obtain
\begin{align*}
{\rm Vol}_{\phi} [\iota \circ \varphi] = {\rm Vol}_{\phi} [\iota].
\end{align*}
\end{lem}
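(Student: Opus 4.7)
The plan is to reduce this lemma directly to its affine Lagrangian counterpart, Lemma \ref{diff equiv aff Lag}, via the cone construction. The key observation is that a diffeomorphism $\varphi \in {\rm Diff}^{\infty}(L)$ induces a cone diffeomorphism $\bar{\varphi} \in {\rm Diff}^{\infty}(C(L))$ defined by $\bar{\varphi}(r, x) = (r, \varphi(x))$, and the induced cone immersions satisfy $\overline{\iota \circ \varphi} = \bar{\iota} \circ \bar{\varphi}$ by the naturality of (\ref{induced imm}). Applying Lemma \ref{diff equiv aff Lag} to the affine Lagrangian immersion $\bar{\iota}$ and the diffeomorphism $\bar{\varphi}$, we obtain ${\rm vol}_{J}[\bar{\iota} \circ \bar{\varphi}] = \bar{\varphi}^{*} {\rm vol}_{J}[\bar{\iota}]$.

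Next I would unwind the definition of the $\phi$-volume form. By Definition \ref{def of phi vol}, ${\rm vol}_{J}[\bar{\iota}] = r^{n} dr \wedge {\rm vol}_{\phi}[\iota]$ and ${\rm vol}_{J}[\bar{\iota} \circ \bar{\varphi}] = r^{n} dr \wedge {\rm vol}_{\phi}[\iota \circ \varphi]$. Since $\bar{\varphi}$ preserves the $r$-coordinate, $\bar{\varphi}^{*} r = r$ and $\bar{\varphi}^{*} dr = dr$. Moreover, under the identification of forms on $L$ with their pullbacks to $C(L) = \mathbb{R}_{>0} \times L$ via the projection, we have $\bar{\varphi}^{*}({\rm vol}_{\phi}[\iota]) = \varphi^{*} {\rm vol}_{\phi}[\iota]$. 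Combining these yields
\[
r^{n} dr \wedge {\rm vol}_{\phi}[\iota \circ \varphi] = r^{n} dr \wedge \varphi^{*} {\rm vol}_{\phi}[\iota],
\]
and since wedging with $r^{n} dr$ is injective on forms pulled back from $L$, the desired equality ${\rm vol}_{\phi}[\iota \circ \varphi] = \varphi^{*} {\rm vol}_{\phi}[\iota]$ follows. Integrating over $L$ in the compact case and using change of variables gives ${\rm Vol}_{\phi}[\iota \circ \varphi] = {\rm Vol}_{\phi}[\iota]$.

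Alternatively one could give a direct proof mirroring that of Lemma \ref{diff equiv aff Lag}: write ${\rm vol}_{\phi}[\iota] = \rho_{\phi}[\iota]\, {\rm vol}_{\iota^{*} g}$, verify $\rho_{\phi}[\iota \circ \varphi](x) = \rho_{\phi}[\iota](\varphi(x))$ by plugging an orthonormal basis of $T_{x} L$ into the defining formula and using that the Jacobian $\varphi_{*}|_{x}$ scales both sides of the square root compatibly, and combine with ${\rm vol}_{(\iota \circ \varphi)^{*} g} = \varphi^{*} {\rm vol}_{\iota^{*} g}$. There is no real obstacle here; the statement is essentially a naturality check, and the only point requiring mild care is verifying that $\bar{\varphi}^{*}$ acts on the $r$-independent factor as $\varphi^{*}$, which is immediate from the product structure of $C(L)$.
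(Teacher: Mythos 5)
Your proof is correct. The paper itself gives no written proof of this lemma: it simply says ``As in the affine Lagrangian case, we easily see the following,'' i.e.\ the intended argument is the direct verification that $\rho_{\phi}[\iota \circ \varphi](x) = \rho_{\phi}[\iota](\varphi(x))$ and ${\rm vol}_{(\iota \circ \varphi)^{*} g} = \varphi^{*} {\rm vol}_{\iota^{*} g}$, exactly mirroring the proof of Lemma \ref{diff equiv aff Lag}. That is precisely your ``alternative'' route at the end, so that part coincides with the paper (with the small simplification that $\varphi_{*}$ carries an orthonormal basis for $(\iota\circ\varphi)^{*}g$ to one for $\iota^{*}g$, so no Jacobian bookkeeping is actually needed). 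Your primary route is genuinely different: you exploit the defining relation ${\rm vol}_{J}[\bar{\iota}] = r^{n} dr \wedge {\rm vol}_{\phi}[\iota]$ of Definition \ref{def of phi vol} together with the naturality $\overline{\iota \circ \varphi} = \bar{\iota} \circ \bar{\varphi}$ of the cone construction (\ref{induced imm}), and reduce everything to Lemma \ref{diff equiv aff Lag} on $C(M)$. This buys you a proof that requires no recomputation at all and makes transparent that the equivariance is inherited from the Lagrangian setting; the direct route buys self-containedness on $L$ and avoids the (easy but necessary) checks that $\bar{\varphi}$ preserves $r$ and that contraction with $\partial/\partial r$ recovers the $r$-independent factor. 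Both are sound.
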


\subsection{Geodesics and convexity}

In \cite[Section 3.1]{LotayPacini}, the notion of geodesics 
in the space of affine Lagrangian submanifolds 
was introduced. 
Analogously, 
we define the notion of geodesics 
in the space of affine Legendrian submanifolds. 

Let $(M, g, \eta, \xi, \phi)$ be a $(2n+1)$-dimensional Sasakian manifold and 
$L$ be an oriented $n$-dimensional manifold admitting an embedding $L \hookrightarrow M$. 
Let $\mathcal{P}$ be the space of all affine Legendrian embeddings of $L$:
\begin{align*}
\mathcal{P} = \{ \iota: L \hookrightarrow M; \iota \mbox{ is an affine Legendrian embedding} \}.
\end{align*}
The group ${\rm Diff}^{\infty}(L)$ of diffeomorphisms of $L$ 
acts freely on $\mathcal{P}$ on the right 
by reparametrizations. 
Set $\mathcal{A} = \mathcal{P}/ {\rm Diff}^{\infty}(L)$.
Thus we can regard $\mathcal{P}$ as 
a principal ${\rm Diff}^{\infty}(L)$-bundle over $\mathcal{A}$.
Denote by $\pi: \mathcal{P} \rightarrow \mathcal{A}$ the canonical projection. 

For each $\iota \in \mathcal{P}$, define the subspaces of $T_{\iota} \mathcal{P}$ by
\begin{align*}
V_{\iota} = \{ \iota_{*} X; X \in \mathfrak{X}(L) \}, \qquad
H_{\iota} = \{ \phi \iota_{*} Y + f \xi \circ \iota; Y \in \mathfrak{X}(L), f \in C^{\infty}(L) \}.
\end{align*}
We easily see that 
$V_{\iota} = \ker ((d \pi)_{\iota} : T_{\iota} \mathcal{P} \rightarrow T_{\pi (\iota)} \mathcal{A})$
and 
we have a decomposition
$T_{\iota} \mathcal{P} = V_{\iota} \oplus H_{\iota}$. 
As in the proof of \cite[Lemma 3.1]{LotayPacini}, we see that 
the distribution $\iota \mapsto H_{\iota}$ on $\mathcal{P}$ 
is ${\rm Diff}^{\infty}(L)$-invariant. 
Thus 
the distribution $\iota \mapsto H_{\iota}$ defines a connection on 
the principal ${\rm Diff}^{\infty}(L)$-bundle $\mathcal{P}$.

It is known that 
the associated vector bundle 
$
\mathcal{P} \times_{{\rm Diff}^{\infty}(L)} (\mathfrak{X}(L) \times C^{\infty}(L))
$
to the standard action of ${\rm Diff}^{\infty}(L)$ on 
$\mathfrak{X}(L) \times C^{\infty}(L)$
is isomorphic to 
the tangent bundle $T \mathcal{A}$:
\begin{align*}
\mathcal{P} \times_{{\rm Diff}^{\infty}(L)} (\mathfrak{X}(L) \times C^{\infty}(L)) 
\cong T \mathcal{A} 
\end{align*}
via $[\iota, (Y, f)] \mapsto (d \pi)_{\iota} (\phi \iota_{*} Y + f \xi \circ \iota)$.
Then the connection on $\mathcal{P}$ induces a connection on $T \mathcal{A}$.
We define the geodesic 
$\{ L_{t} \}$ on $\mathcal{A}$ by requiring that 
$\frac{d L_{t}}{dt}$ is parallel with respect to this connection. 

\begin{lem} \label{def of geodesic}
A curve $\{ L_{t} \} \subset \mathcal{A}$ is a 
{\bf geodesic} if and only if there exists a curve 
of affine Legendrian embeddings $\{ \iota_{t} \}$, a fixed vector field $Y \in \mathfrak{X}(L)$
and a function $f \in C^{\infty}(L)$
such that 
$\pi (\iota_{t}) = L_{t}$ and
\begin{align*}
\frac{d \iota_{t}}{dt} = \phi (\iota_{t})_{*} Y + f \xi \circ \iota_{t}.
\end{align*}
This implies that 
$[ (\iota_{t})_{*} Y, \phi  (\iota_{t})_{*} Y + f \xi \circ \iota_{t}] = 0$ 
for all t for which $\{ L_{t} \}$ is defined.
\end{lem}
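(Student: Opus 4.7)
The plan is to unpack the geodesic condition in terms of parallel transport for the principal connection on $\mathcal{P} \to \mathcal{A}$, and then derive the bracket identity from a standard commuting-flows argument.

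Recall that by construction, $\{L_t\} \subset \mathcal{A}$ is a geodesic precisely when $\frac{dL_t}{dt}$ is parallel, as a section of $T\mathcal{A}$ along $L_t$, for the connection induced on $T\mathcal{A}$ by the horizontal distribution $\iota \mapsto H_\iota$. Under the associated bundle isomorphism $T\mathcal{A} \cong \mathcal{P}\times_{\mathrm{Diff}^\infty(L)}(\mathfrak{X}(L)\times C^\infty(L))$, a section is parallel along $L_t$ if and only if, evaluated on some horizontal lift $\iota_t$ of $L_t$, it is represented by a pair $(Y,f) \in \mathfrak{X}(L)\times C^\infty(L)$ independent of $t$.

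First I would pick a horizontal lift $\iota_t \in \mathcal{P}$ of $L_t$; such a lift exists locally by standard ODE arguments applied to the connection. Horizontality of $\iota_t$ amounts to $\frac{d\iota_t}{dt} \in H_{\iota_t}$, i.e.\
\begin{align*}
\frac{d\iota_t}{dt} = \phi\,(\iota_t)_* Y_t + f_t\, \xi\circ\iota_t
\end{align*}
for uniquely determined $Y_t \in \mathfrak{X}(L)$ and $f_t \in C^\infty(L)$, since by hypothesis $T_{\iota_t} \mathcal{P} = V_{\iota_t} \oplus H_{\iota_t}$. Through the bundle isomorphism, $\frac{dL_t}{dt} = (d\pi)_{\iota_t}\bigl(\frac{d\iota_t}{dt}\bigr)$ corresponds to the class $[\iota_t, (Y_t, f_t)]$. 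Parallelism of $\frac{dL_t}{dt}$ along a horizontal lift is thus equivalent to $(Y_t, f_t)$ being $t$-independent, which gives both directions of the claimed equivalence: given a geodesic one extracts such a horizontal lift with constant $(Y,f)$, and conversely any $\iota_t$ as in the statement is automatically its own horizontal lift with constant fiber coordinates.

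For the bracket identity, assume $\frac{d\iota_t}{dt} = Z_t$ with $Z_t := \phi\,(\iota_t)_* Y + f\, \xi\circ\iota_t$ and $(Y,f)$ fixed. Let $\varphi_s^Y$ denote the flow of $Y$ on $L$ and form the two-parameter map $\Psi(t,s,x) := \iota_t(\varphi_s^Y(x))$. On the domain $L \times \mathbb{R}^2$ the vector fields $\partial_t$ and the constant-in-$t$ lift of $Y$ commute, so their pushforwards under $\Psi$ satisfy $[\Psi_* \partial_t, \Psi_* Y] = 0$ along the image. Evaluating at $s=0$ identifies these pushforwards with $Z_t$ and $(\iota_t)_* Y$ respectively, interpreted as vector fields on the immersed $(n+1)$-dimensional submanifold swept out by the family, hence $[(\iota_t)_* Y,\, \phi\,(\iota_t)_* Y + f\, \xi\circ\iota_t] = 0$ along $\iota_t(L)$ for every $t$.

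The main (really only) obstacle is making precise the parallelism $\Leftrightarrow$ constant-fiber-coordinates statement for the induced connection on the associated bundle; everything else is a routine bookkeeping of the definitions of $H_\iota$ and $\pi$, plus the equality of mixed partials for a commuting pair of fields on $L \times I$.
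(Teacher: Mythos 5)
Your proposal is correct and follows essentially the same route as the paper: the paper likewise treats the equivalence as a direct unwinding of the definition of the induced connection (you simply spell out the horizontal-lift/constant-fiber-coordinate step that the paper leaves implicit), and it proves the bracket identity by exactly your commuting-flows argument, using the surface $(s,t)\mapsto \iota_t(x(s))$ along an integral curve $x(s)$ of $Y$ in place of your map $\Psi(t,s,x)=\iota_t(\varphi^Y_s(x))$.
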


\begin{proof}
Let $\{ L_{t} \}_{t \in (a, b)} \subset \mathcal{A}$ be a geodesic 
and $\{ x(s) \}_{s \in (c, d)}$ be an integral curve
of $Y$ on $L$. 
Then
\begin{align*}
f: (c, d) \times (a, b) \ni (s, t) \mapsto \iota_{t} (x(s)) \in M
\end{align*}
is an embedded surface in $M$ and 
\begin{align*}
\frac{\partial f}{\partial t} = \phi (\iota_{t})_{*} Y + f \xi \circ \iota_{t}, \qquad
\frac{\partial f}{\partial s} = (\iota_{t})_{*} Y, 
\end{align*}
which imply that they commute.
\end{proof}

\begin{definition} \label{def of convex}
A functional $F : \mathcal{A} \rightarrow \mathbb{R}$ is {\bf convex} 
if and only if 
$\{ F \circ L_{t} \}$
is a convex function in one variable for any geodesic $\{ L_{t} \}$ in $\mathcal{A}$. 
\end{definition}

\begin{rem}
The existence theory of a geodesic for any 
$Y \in \mathfrak{X}(L)$ and $f \in C^{\infty}(L)$ is not known 
as in the case of the standard Riemannian geometry. 
\end{rem}


\section{First variation of the $\phi$-volume} \label{first var}

Let $(M^{2n+1}, g, \eta, \xi, \phi)$ be a $(2n+1)$-dimensional Sasakian manifold. 
Let $\iota : L^{n} \hookrightarrow M$ 
be an affine Legendrian immersion 
of an oriented $n$-dimensional manifold $L$. 
For simplicity, we identify 
$\iota_{*} X$ with $X$ for $X \in TL$. 

Fix a point $x \in L$.
Let 
$\{e_{1}, \cdots, e_{n} \}$ 
be an orthonormal basis of $T_{x}L$. 
Since $\iota$ is affine Legendrian, 
$\{ e_{1}, \cdots, e_{n}, \phi (e_{1}), \cdots, \phi (e_{n}), \xi \}$
is the basis of $T_{\iota (x)} M$. 
Let   
$\{e^{1}, \cdots, e^{n}, f^{1}, \cdots, f^{n}, \eta^{*} \} \subset T^{*}_{\iota (x)} M$
be the dual basis.
We easily see the following. 

\begin{lem} \label{basic relation of dual coframe}
Use the notation in (\ref{canonical proj}). We have 
\begin{align*}
e^{i} &= g(\pi_{L} (\cdot), e_{i}), \qquad
\eta^{*} = g(\xi, \pi_{\xi} (\cdot)) = \eta (\pi_{\xi} (\cdot)) = \eta - \eta \circ \pi_{L},\\
e^{i} &= f^{i} \circ \phi, \qquad
f^{i} = -e^{i} \circ \phi. 
\end{align*}
In particular, we have 
\begin{align*}
\eta \circ \pi_{L} \circ \phi = - \eta^{*} \circ \phi.
\end{align*}
\end{lem}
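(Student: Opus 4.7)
The plan is to verify each identity by evaluating both sides on the basis $\{e_1, \ldots, e_n, \phi(e_1), \ldots, \phi(e_n), \xi\}$ of $T_{\iota(x)} M$, using only the defining property of the dual coframe and the elementary Sasakian identities from Lemma \ref{Sasaki eq1} (namely $\phi(\xi) = 0$, $\eta \circ \phi = 0$, $\eta = g(\xi,\cdot)$) together with $\phi^{2} = -{\rm id} + \eta \otimes \xi$ from Definition \ref{def of Sasakian 1}.

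First I would handle the two projection formulas. For $e^{i} = g(\pi_{L}(\cdot), e_{i})$, I evaluate the right-hand side on each basis vector: $\pi_{L}(e_{j}) = e_{j}$ gives $g(e_{j},e_{i}) = \delta^{i}_{j}$ (since $\{e_{j}\}$ is orthonormal), while $\pi_{L}(\phi e_{j}) = 0$ and $\pi_{L}(\xi) = 0$ give zero. This matches the defining property of $e^{i}$. For $\eta^{*}$, the equality $g(\xi, \pi_{\xi}(\cdot)) = \eta(\pi_{\xi}(\cdot))$ is immediate from $\eta = g(\xi,\cdot)$. To obtain $\eta(\pi_{\xi}(\cdot)) = \eta - \eta \circ \pi_{L}$, I decompose the identity $\eta = \eta \circ \pi_{L} + \eta \circ \pi_{\phi} + \eta \circ \pi_{\xi}$ and note that $\eta \circ \pi_{\phi} = 0$, since $\pi_{\phi}$ takes values in $\phi \iota_{*}(TL)$ and $\eta \circ \phi = 0$. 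Finally, evaluating $\eta \circ \pi_{\xi}$ on the three pieces of the basis gives exactly the dual functional to $\xi$, so it equals $\eta^{*}$.

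Next I would verify $e^{i} = f^{i} \circ \phi$. On $e_{j}$ we get $f^{i}(\phi e_{j}) = \delta^{i}_{j}$ directly from the dual basis property. On $\phi e_{j}$, applying $\phi^{2} = -{\rm id} + \eta \otimes \xi$ gives $f^{i}(\phi^{2} e_{j}) = -f^{i}(e_{j}) + \eta(e_{j}) f^{i}(\xi) = 0$. On $\xi$, we use $\phi(\xi) = 0$ to conclude $f^{i}(\phi \xi) = 0$. These match $e^{i}$ on the basis. The identity $f^{i} = -e^{i} \circ \phi$ follows symmetrically, the only nontrivial evaluation being on $\phi e_{j}$ where $-e^{i}(\phi^{2} e_{j}) = e^{i}(e_{j}) - \eta(e_{j}) e^{i}(\xi) = \delta^{i}_{j}$.

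The last identity $\eta \circ \pi_{L} \circ \phi = -\eta^{*} \circ \phi$ is a one-line consequence of the decomposition already established: composing $\eta = \eta \circ \pi_{L} + \eta^{*}$ with $\phi$ on the right and using $\eta \circ \phi = 0$ gives $\eta \circ \pi_{L} \circ \phi + \eta^{*} \circ \phi = 0$. Since every step reduces to evaluating linear functionals on an explicit basis and invoking Lemma \ref{Sasaki eq1}, I do not expect any genuine obstacle; the only minor subtlety is remembering that, because $\iota$ is merely affine Legendrian and not Legendrian, the basis $\{e_{j}, \phi e_{j}, \xi\}$ is not orthonormal in $T_{\iota(x)} M$, so each computation must be done with respect to the dual coframe rather than with the metric.
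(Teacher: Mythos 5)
Your proof is correct in every step, and since the paper offers no proof of this lemma (it is introduced with ``We easily see the following''), your basis-by-basis verification is exactly the intended routine argument: evaluate each side on $\{e_{j},\phi e_{j},\xi\}$, use $\phi^{2}=-\mathrm{id}+\eta\otimes\xi$, $\phi(\xi)=0$, $\eta\circ\phi=0$, $\eta=g(\xi,\cdot)$, and derive the final identity by composing $\eta^{*}=\eta-\eta\circ\pi_{L}$ with $\phi$. Your closing remark is also the one point worth being careful about: the $e_{j}$ are orthonormal in $T_{\iota(x)}M$, but the full basis $\{e_{j},\phi e_{j},\xi\}$ is not, so the dual coframe cannot be read off from the metric alone, which is precisely why the projections $\pi_{L}$, $\pi_{\xi}$ appear in the formulas.
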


Now, we compute the first variation of the $\phi$-volume form.
We first give all the statements in this section and then prove them. 

\begin{prop} \label{1st var phi vol form}
Let 
$\iota_{t} : L \hookrightarrow M$ 
be a one-parameter family of affine Legendrian immersions 
satisfying $\iota_{0} = \iota$. 
Set $\frac{\partial \iota_{t}}{\partial t}|_{t=0} = Z \in C^{\infty}(L, \iota^{*} TM)$. 
Then 
at $x \in L$
we have 
\begin{align*}
\left. \frac{\partial}{\partial t} {\rm vol}_{\phi} [\iota_{t}] \right|_{t=0} 
= 
\left(
\sum_{i=1}^{n}
e^{i} (\nabla_{e_{i}} Z) 
- \eta^{*} (\phi (Z))
\right) 
{\rm vol}_{\phi} [\iota]. 
\end{align*}
\end{prop}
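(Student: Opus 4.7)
The plan is to fix a point $x_0 \in L$ and an orthonormal basis $\{e_1, \ldots, e_n\}$ of $(T_{x_0} L, \iota^* g)$, and then compute both sides on $(e_1, \ldots, e_n)$. The key reduction is the identity ${\rm vol}_\phi[\iota_t](e_1, \ldots, e_n)^2 = V(t)$, where
\begin{align*}
V(t) := {\rm vol}_g\bigl((\iota_t)_* e_1, \ldots, (\iota_t)_* e_n, -\xi|_{\iota_t(x_0)}, \phi (\iota_t)_* e_1, \ldots, \phi (\iota_t)_* e_n\bigr).
\end{align*}
This holds because, with $F(t)_{ij} := g((\iota_t)_* e_i, (\iota_t)_* e_j)$, rescaling the $(\iota_t)_* e_i$ by $F(t)^{-1/2}$ to orthonormalize them contributes $\det F(t)^{-1}$ to $\rho_\phi[\iota_t]^2$, which exactly cancels the $\det F(t)$ coming from ${\rm vol}_{\iota_t^* g}(e_1, \ldots, e_n)^2$. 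It therefore suffices to show $V'(0) = 2 V(0)\bigl(\sum_i e^i(\nabla_{e_i} Z) - \eta^*(\phi Z)\bigr)$, since dividing by $2\sqrt{V(0)} = 2\,{\rm vol}_\phi[\iota](e_1,\ldots,e_n)$ then yields the claim.

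Since $\nabla {\rm vol}_g = 0$, one differentiates $V(t)$ by covariantly differentiating each of the $2n+1$ arguments along $t \mapsto \iota_t(x_0)$. The standard variation formula gives $\nabla_{\partial_t}(\iota_t)_* e_i|_{t=0} = \nabla_{e_i} Z$, and Lemma \ref{Sasaki eq2} yields $\nabla_Z \xi = -\phi Z$ and $(\nabla_Z \phi)(e_i) = g(Z, e_i)\xi - \eta(e_i) Z$, whence $\nabla_{\partial_t}(\phi (\iota_t)_* e_i)|_{t=0} = g(Z, e_i)\xi - \eta(e_i) Z + \phi\nabla_{e_i} Z$. In each resulting signed-volume term only one argument is perturbed, so expanding the perturbation in $\{e_k, \phi e_k, \xi\}$ and applying antisymmetry kills every contribution except that in the same direction as the original vector, whose coefficient is read off by the appropriate dual form $e^i$, $f^i$, or $\eta^*$ from Lemma \ref{basic relation of dual coframe}.

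The contributions assemble into four groups, which collapse via two algebraic identities. The identity $f^i \circ \phi = e^i$ on the affine Legendrian decomposition, a consequence of Lemma \ref{basic relation of dual coframe} together with $\phi^2 = -id_{TM} + \eta\otimes\xi$ and $e^i(\xi) = 0$, converts $\sum_i f^i(\phi \nabla_{e_i} Z) V(0)$ into a second copy of $\sum_i e^i(\nabla_{e_i} Z) V(0)$. Writing $Z = P + \phi Q + s\xi$ in the splitting (\ref{def aff Leg decomp}) with $P, Q \in \iota_* T_{x_0} L$ gives $\phi Z = \phi P - Q + \eta(Q)\xi$, so $\eta^*(\phi Z) = \eta(Q) = \sum_i \eta(e_i) f^i(Z)$; this absorbs the otherwise orphan term $\sum_i \eta(e_i) f^i(Z) V(0)$ produced by $(\nabla_Z \phi)(e_i)$ into a second copy of $-\eta^*(\phi Z) V(0)$. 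The main obstacle is recognizing this second identity and keeping the signs straight throughout; once it is in hand, one obtains $V'(0) = 2 V(0)\bigl(\sum_i e^i(\nabla_{e_i} Z) - \eta^*(\phi Z)\bigr)$, completing the proof.
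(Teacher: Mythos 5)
Your proposal is correct and follows essentially the same route as the paper: the same reduction of ${\rm vol}_\phi[\iota_t](e_1,\dots,e_n)$ to the square root of the $(2n+1)$-fold volume $V(t)$ via the determinant cancellation, differentiation using parallelism of ${\rm vol}_g$ together with $\nabla_Z\xi=-\phi Z$ and $(\nabla_Z\phi)(e_i)=g(Z,e_i)\xi-\eta(e_i)Z$, and the two coframe identities $e^i=f^i\circ\phi$ and $\sum_i\eta(e_i)f^i(Z)=\eta^*(\phi Z)$ to collapse the terms. The signs and the final assembly match the paper's computation.
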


\begin{definition} \label{def of H phi}
Define the vector field $H_{\phi} \in \mathfrak{X}(L)$ on $L$ by 
\begin{align*}
H_{\phi} =  
-\left( 
\phi {\rm tr}_{L} (\pi^{t}_{\phi} \nabla \pi^{t}_{L})
\right)^{\top}
+ 
\xi^{\top}, 
\end{align*}
where 
${\rm tr}_{L}$ is a trace on $L$, 
$\top: \iota^{*} TM  \rightarrow \iota_{*} TL$ is the tangential projection defined by 
the orthogonal decomposition of $\iota^{*} TM$ by the metric $g$ 
and 
\begin{align*}
\pi^{t}_{L} : \iota^{*} TM  \rightarrow (\phi (\iota_{*} TL) \oplus \mathbb{R} \xi \circ \iota )^{\perp},
\end{align*}
where 
$(\phi (\iota_{*} TL) \oplus \mathbb{R} \xi \circ \iota )^{\perp}$ 
is the orthogonal complement of $\phi (\iota_{*} TL) \oplus \mathbb{R} \xi \circ \iota$
with respect to $g$,
is the transposed operator of $\pi_{L}$ defined in (\ref{canonical proj})
via the metric $g$, namely
\begin{align*}
g(\pi^{t}_{L} X, Y) = g(X, \pi_{L} Y)
\end{align*}
for any $X, Y \in \iota^{*} TM$. 
Similarly, we can define transposed operators 
$\pi^{t}_{\phi}: \iota^{*} TM  \rightarrow (\iota_{*} TL \oplus \mathbb{R} \xi \circ \iota)^{\perp}$
and $\pi^{t}_{\xi} : \iota^{*} TM  \rightarrow (\iota_{*} TL \oplus \phi (\iota_{*} TL))^{\perp}$
of $\pi_{\phi}$ and $\pi_{\xi}$, respectively. 
\end{definition}

The vector field $\phi H_{\phi}$ is a generalization of a mean curvature vector. 
See Remark \ref{MC in Leg case}.

\begin{cor} \label{1st var cor}
Let $X, Y \in \mathfrak{X}(L)$ be vector fields on $L$ 
and $f \in C^{\infty}(L)$ be a smooth function. 
Then we have 
\begin{align*}
\sum_{i=1}^{n} e^{i} (\nabla_{e_{i}} (X+\phi Y + f \xi))  
=
\frac{{\rm div} (\rho_{\phi} [\iota] X)}{\rho_{\phi}[\iota]} 
- 
g (Y, H_{\phi}) + \eta (Y). 
\end{align*}
\end{cor}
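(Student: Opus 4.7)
My plan is to expand the left-hand side by linearity of $\nabla$ and $e^i$ into three pieces—those of $X$, $\phi Y$, and $f\xi$—and match each with the corresponding part of the right-hand side, using the Sasakian identities of Lemma \ref{Sasaki eq2}, the dual-basis relations of Lemma \ref{basic relation of dual coframe}, the reparametrization invariance of Lemma \ref{diff equiv aff Leg}, the first variation in Proposition \ref{1st var phi vol form}, and the definition of $H_\phi$ in Definition \ref{def of H phi}.

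The $f\xi$ piece vanishes termwise: by Lemma \ref{Sasaki eq2}, $\nabla_{e_i}(f\xi) = e_i(f)\,\xi - f\,\phi(e_i)$, and by Lemma \ref{basic relation of dual coframe} both $\xi$ and $\phi(e_i)$ lie in the kernel of $e^i$. For the $X$ piece, I would compute $\frac{d}{dt}\big|_{t=0}{\rm vol}_\phi[\iota_t]$ for the reparametrization $\iota_t = \iota\circ\varphi_t^X$ in two ways. On the one hand, Proposition \ref{1st var phi vol form} yields $\bigl(\sum_i e^i(\nabla_{e_i} X) - \eta^*(\phi X)\bigr){\rm vol}_\phi[\iota]$, and $\eta^*(\phi X)=0$ because $\phi X \in \phi\iota_*TL \subset \ker\eta^*$. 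On the other hand, Lemma \ref{diff equiv aff Leg} gives ${\rm vol}_\phi[\iota_t] = (\varphi_t^X)^*{\rm vol}_\phi[\iota]$, so the derivative equals $\mathcal{L}_X\bigl(\rho_\phi[\iota]\,{\rm vol}_{\iota^*g}\bigr) = \frac{{\rm div}(\rho_\phi[\iota]\,X)}{\rho_\phi[\iota]}{\rm vol}_\phi[\iota]$. Equating the two expressions yields $\sum_i e^i(\nabla_{e_i} X) = \frac{{\rm div}(\rho_\phi[\iota]\,X)}{\rho_\phi[\iota]}$.

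For the $\phi Y$ piece, I expand via $(\nabla_{e_i}\phi)(Y) = g(e_i,Y)\,\xi - \eta(Y)\,e_i$ from Lemma \ref{Sasaki eq2} to get $\nabla_{e_i}(\phi Y) = g(e_i,Y)\,\xi - \eta(Y)\,e_i + \phi\,\nabla_{e_i}Y$. Under $\sum_i e^i(\cdot)$ the $\xi$-term dies, the $-\eta(Y)e_i$-term contributes $-n\eta(Y)$ since $\sum_i e^i(e_i)=n$, and we are left with $\sum_i e^i(\phi\,\nabla_{e_i}Y)$. I then identify this last sum with $-g(Y,H_\phi) + (n+1)\eta(Y)$ by rewriting $e^i(V) = g(V,\pi_L^t e_i)$ (which follows from Lemma \ref{basic relation of dual coframe} combined with the defining property of $\pi_L^t$), invoking the skew-symmetry $g(\phi U,V) = -g(U,\phi V)$ implied by $d\eta = 2g(\cdot,\phi(\cdot))$, and unwinding Definition \ref{def of H phi}: the trace $-(\phi\,{\rm tr}_L(\pi_\phi^t\nabla\pi_L^t))^\top$ supplies the non-scalar part of $-g(Y,H_\phi)$, while the $\xi^\top$ summand of $H_\phi$ furnishes exactly the $\eta(Y)$ needed to turn $(n+1)\eta(Y)$ into the $+\eta(Y)$ on the right-hand side. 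Summing the three contributions then gives the stated identity.

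The main obstacle will be this last identification, since the decomposition $TM = \iota_*TL \oplus \phi\iota_*TL \oplus \mathbb{R}\xi$ is not $g$-orthogonal in the affine Legendrian setting, so the metric transposes $\pi_L^t,\pi_\phi^t,\pi_\xi^t$ genuinely differ from the projections $\pi_L,\pi_\phi,\pi_\xi$ themselves; one must carefully expand $(\nabla_{e_i}\pi_L^t)(e_i)$ and track the $\xi$-correction to match the combinatorial bookkeeping hidden in the definition of $H_\phi$, which has been engineered precisely so that this identity closes.
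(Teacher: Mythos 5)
Your proposal is correct and follows essentially the same route as the paper: the $X$-term via Proposition \ref{1st var phi vol form} combined with the reparametrization invariance of Lemma \ref{diff equiv aff Leg}, the $f\xi$-term by $\nabla_{e_i}\xi=-\phi(e_i)$, and the $\phi Y$-term by transferring the derivative onto $\pi_L^t e_i$ and recognizing the trace in Definition \ref{def of H phi}. The only cosmetic difference is that you peel off $(\nabla_{e_i}\phi)(Y)$ before integrating by parts (hence the intermediate $-n\eta(Y)$ and $(n+1)\eta(Y)$), whereas the paper works directly with $\nabla_{e_i}(\phi Y)$; the bookkeeping reconciles correctly.
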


From 
Proposition \ref{1st var phi vol form} and Corollary \ref{1st var cor}, 
we immediately see the following first variation formula 
of the $\phi$-volume. 

\begin{prop} \label{1st var phi Vol}

Use the notation of Proposition \ref{1st var phi vol form}. 
Suppose that $L$ is compact and 
$\frac{\partial \iota_{t}}{\partial t}|_{t=0} = Z
= \phi Y + f \xi$, 
where $Y \in \mathfrak{X}(L)$ is a vector field on $L$ 
and $f \in C^{\infty}(L)$ is a smooth function.
Then we have 
\begin{align*}
\left. \frac{d}{dt} {\rm Vol}_{\phi} [\iota_{t}] \right|_{t=0} 
= 
- \int_{L} g (Y, H_{\phi}) {\rm vol}_{\phi} [\iota].
\end{align*}
In particular, 
$\iota$ is a  critical point for the $\phi$-volume  
if and only if $H_{\phi} = 0$. 
\end{prop}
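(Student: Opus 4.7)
The plan is to combine Proposition \ref{1st var phi vol form} with Corollary \ref{1st var cor} and then simplify the extra term $\eta^{*}(\phi Z)$ using the Sasakian structure equations. Since the statement is local-to-integral, I would first obtain the pointwise identity for $\frac{\partial}{\partial t}{\rm vol}_{\phi}[\iota_{t}]|_{t=0}$, and only integrate at the very end.

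First I would invoke Proposition \ref{1st var phi vol form} to write
\[
\left. \frac{\partial}{\partial t} {\rm vol}_{\phi}[\iota_{t}] \right|_{t=0}
= \Bigl( \sum_{i=1}^{n} e^{i}(\nabla_{e_{i}} Z) - \eta^{*}(\phi Z) \Bigr) {\rm vol}_{\phi}[\iota].
\]
Then I apply Corollary \ref{1st var cor} to $Z = 0 + \phi Y + f \xi$, which collapses the sum (the divergence term vanishes because the vector field along $L$ is zero) to
\[
\sum_{i=1}^{n} e^{i}(\nabla_{e_{i}} Z) = -g(Y, H_{\phi}) + \eta(Y).
\]

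The next step, which I expect to be the most delicate, is to check that the extra term $\eta^{*}(\phi Z)$ exactly produces the $\eta(Y)$ needed to cancel. Using $\phi^{2} = -{\rm id}_{TM} + \eta \otimes \xi$ and $\phi \xi = 0$ (Lemma \ref{Sasaki eq1}) one computes
\[
\phi Z = \phi(\phi Y + f \xi) = -Y + \eta(Y)\xi.
\]
Using Lemma \ref{basic relation of dual coframe}, namely $\eta^{*} = \eta - \eta \circ \pi_{L}$, and the facts that $\pi_{L}(\iota_{*} Y) = \iota_{*} Y$ and $\pi_{L} \xi = 0$, I obtain $\eta^{*}(Y) = 0$ and $\eta^{*}(\xi) = 1$, hence $\eta^{*}(\phi Z) = \eta(Y)$. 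The $\eta(Y)$ contributions therefore cancel and the pointwise integrand reduces cleanly to $-g(Y, H_{\phi}){\rm vol}_{\phi}[\iota]$. Integrating over the compact $L$ yields the first variation formula.

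For the characterization of critical points, the direction $H_{\phi} = 0 \Rightarrow$ critical is immediate. Conversely, given $\int_{L} g(Y, H_{\phi}) {\rm vol}_{\phi}[\iota] = 0$ for every $Y \in \mathfrak{X}(L)$ and every $f \in C^{\infty}(L)$, I specialize to $Y = H_{\phi}$ and use that $\rho_{\phi}[\iota] > 0$ pointwise (Lemma \ref{equality rho phi}, since $\iota$ is affine Legendrian, so $\rho_{\phi}[\iota]$ is nowhere zero) to conclude $|H_{\phi}|^{2} \equiv 0$. This finishes the proof. The main conceptual point is that although the admissible variations in the horizontal space $H_{\iota}$ involve both a vector field $Y$ and a function $f$, the function $f$ drops out entirely in the first variation, reflecting the fact that variations in the Reeb direction do not change the $\phi$-volume to first order.
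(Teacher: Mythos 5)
Your proposal is correct and follows exactly the route the paper intends: the paper derives this proposition "immediately" from Proposition \ref{1st var phi vol form} and Corollary \ref{1st var cor}, and your computation of $\eta^{*}(\phi Z)=\eta(Y)$ via $\phi^{2}=-\mathrm{id}+\eta\otimes\xi$ and $\eta^{*}=\eta-\eta\circ\pi_{L}$, together with the positivity of $\rho_{\phi}[\iota]$ for the critical-point characterization, supplies precisely the omitted details.
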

Note that $f$ does not appear in this formula. 
We call an immersion {\bf $\phi$-minimal} 
if $H_{\phi} = 0$.

\begin{rem} \label{MC in Leg case}
Suppose that $L$ is Legendrian. 
Let $H$ be the mean curvature vector of $L$.
Then we have 
\begin{align*}
H_{\phi} = - \phi H, \qquad \phi H_{\phi} = H.
\end{align*}
\end{rem}

\begin{proof}[Proof of Proposition \ref{1st var phi vol form}]

Denote by ${\rm vol}_{\phi} [\iota_{t}]$ the $\phi$-volume of $\iota_{t}$. 
Let $\{e_{1}(t), \cdots, e_{n}(t) \}$ be an oriented orthonormal basis of $T_{x} L$ 
with respect to $\iota_{t}^{*} g$
and 
$\{e^{1}(t), \cdots, e^{n}(t) \} \subset T^{*}_{x} L$ be the dual basis. 
Then 
we have
\begin{align*}
{\rm vol}_{\phi} [\iota_{t}]_{x}
&= \rho_{\phi}[\iota_{t}] (x) e^{1}(t) \wedge \cdots \wedge e^{n}(t), \\
\rho_{\phi} [\iota_{t}] (x)
&=
\sqrt{
({\rm vol}_{g})_{\iota_{t}(x)}
(
(\iota_{t})_{*} e_{1}(t), \cdots, (\iota_{t})_{*} e_{n}(t), 
-\xi_{\iota_{t}(x)}, 
\phi (\iota_{t})_{*} e_{1}(t), \cdots, \phi (\iota_{t})_{*} e_{n}(t)
)
}.
\end{align*}
Since 
\begin{align*}
(\iota_{t})_{*} (e_{1} \wedge \cdots \wedge e_{n})
=
(e^{1}(t) \wedge \cdots \wedge e^{n}(t))(e_{1}, \cdots, e_{n})) \cdot 
(\iota_{t})_{*} (e_{1}(t) \wedge \cdots \wedge e_{n}(t)), 
\end{align*}
it follows that 
\begin{align*}
{\rm vol}_{\phi} [\iota_{t}]_{x} &= \rho_{\phi} (t) (x) \cdot {\rm vol}_{\iota^{*}g}, 
\end{align*}
where 
\begin{align*}
\rho_{\phi} (t) (x)
=
\sqrt{
({\rm vol}_{g})_{\iota_{t}(x)}
(
(\iota_{t})_{*} e_{1}, \cdots, (\iota_{t})_{*} e_{n}, 
-\xi_{\iota_{t}(x)}, 
\phi (\iota_{t})_{*} e_{1}, \cdots, \phi (\iota_{t})_{*} e_{n}
)
}.
\end{align*}
Thus we may consider $\left. \frac{\partial}{\partial t} \rho_{\phi}(t) (x) \right|_{t=0}$. 
Set 
\begin{align*}
\nabla_{Z} e_{i} &= \nabla_{\frac{\partial}{\partial t}} (\iota_{t})_{*} e_{i} |_{t=0}, \qquad
\nabla_{Z} (\phi e_{i}) = \nabla_{\frac{\partial}{\partial t}} (\phi (\iota_{t})_{*} e_{i}) |_{t=0}. 
\end{align*}
Since the volume form is parallel, we have 
\begin{align*}
& \left. \frac{\partial}{\partial t} \rho_{\phi}(t) (x) \right|_{t=0}
\\
=&
\frac{\sum_{i=1}^{n}
({\rm vol}_{g})_{\iota (x)}
(
(e_{1}, \cdots, \nabla_{Z} e_{i}, \cdots, e_{n}, -\xi_{\iota (x)}, 
\phi e_{1}, \cdots, \phi e_{n}
)
}{2 \rho_{\phi}[\iota]}\\
&+
\frac{\sum_{i=1}^{n}
({\rm vol}_{g})_{\iota (x)}
( e_{1}, \cdots, e_{n}, 
-\xi_{\iota (x)}, 
\phi e_{1}, \cdots, 
\nabla_{Z} (\phi e_{i}), \cdots,
\phi e_{n} 
)
}{2 \rho_{\phi}[\iota]}\\
&+
\frac{
({\rm vol}_{g})_{\iota (x)}
( e_{1}, \cdots, e_{n}, 
- \nabla_{\frac{\partial}{\partial t}} \xi_{\iota_{t}(x)} |_{t=0}, 
\phi e_{1}, \cdots, \phi e_{n}
)
}{2 \rho_{\phi}[\iota]}.
\end{align*}
Using the notation at the beginning of Section \ref{first var}, 
we have 
\begin{align*}
&
\left.
\frac{\partial}{\partial t} \rho_{\phi}(t) (x) \right|_{t=0}
\\
=&
\frac{\sum_{i=1}^{n}
e^{i} (\nabla_{Z} e_{i}) \rho_{\phi}[\iota]^{2} 
}{2 \rho_{\phi}[\iota]}
+
\frac{\sum_{i=1}^{n}
f^{i} (\nabla_{Z} (\phi e_{i})) \rho_{\phi}[\iota]^{2} 
}{2 \rho_{\phi}[\iota]}
-
\frac{
\eta^{*} (\phi (Z)) \rho_{\phi}[\iota]^{2} 
}{2 \rho_{\phi}[\iota]}. 
\end{align*}
By Lemmas \ref{Sasaki eq2} and \ref{basic relation of dual coframe}, 
it follows that
\begin{align*}
\nabla_{Z} (\phi e_{i})
&= g(Z, e_{i}) \xi - \eta (e_{i}) Z + \phi (\nabla_{Z} e_{i}), \\
\sum_{i=1}^{n} f^{i} (\nabla_{Z} (\phi e_{i}))
&=
\sum_{i=1}^{n} e^{i} (\nabla_{Z} e_{i})
- \eta^{*} (\phi (Z)).
\end{align*}
Thus we obtain
\begin{align*}
\left.
\frac{\partial}{\partial t} \rho_{\phi}(t) (x) \right|_{t=0}
=
\left(
\sum_{i=1}^{n}
e^{i} (\nabla_{Z} e_{i}) 
- \eta^{*} (\phi (Z))
\right) 
\rho_{\phi}[\iota].
\end{align*}

Here, 
let 
$(x_{1}, \cdots, x_{n})$ be a 
normal coordinate at $x \in L$ of $L$ 
satisfying  
$e_{i} = \frac{\partial}{\partial x_{i}}$
at $x \in L$. 
Define the map 
$\tilde{\iota}: L \times (-\epsilon, \epsilon) 
\rightarrow M$
by
$\tilde{\iota} (x, t) = \iota_{t} (x)$. 
Then 
$\tilde{\iota}_{*} (\frac{\partial}{\partial t})|_{t=0} = Z$. 
Since we may regard $(x_{1}, \cdots, x_{n}, t)$
as the local coordinate of $L \times (-\epsilon, \epsilon)$ near 
$(x, 0)$, 
we have $\nabla_{Z} e_{i} = \nabla_{e_{i}} Z$. 
Thus we obtain the statement. 
\end{proof}

\begin{proof}[Proof of Corollary \ref{1st var cor}]

Setting $Z=X \in \mathfrak{X}(L)$ 
in Proposition \ref{1st var phi vol form}, we have 
\begin{align*}
\left. \frac{\partial}{\partial t} {\rm vol}_{\phi} [\iota_{t}] \right|_{t=0} 
= 
\sum_{i=1}^{n} e^{i} (\nabla_{e_{i}} X) {\rm vol}_{\phi} [\iota]. 
\end{align*}
By Lemma \ref{diff equiv aff Leg}, it follows that 
\begin{align*}
\left. \frac{\partial}{\partial t} {\rm vol}_{\phi} [\iota_{t}] \right|_{t=0} 
= 
L_{X} {\rm vol}_{\phi} [\iota] 
= 
{\rm div}(\rho_{\phi}[\iota] X) {\rm vol}_{\iota^{*}g}.
\end{align*}
Thus we obtain 
\begin{align*}
\sum_{i=1}^{n} e^{i} (\nabla_{e_{i}} X) 
=
\frac{{\rm div} (\rho_{\phi} [\iota] X)}{\rho_{\phi}[\iota]}. 
\end{align*}
Using the notation of (\ref{canonical proj}), we have
\begin{align*}
\sum_{i=1}^{n} e^{i} (\nabla_{e_{i}} (\phi Y))
&=
\sum_{i=1}^{n} g(\pi_{L} (\nabla_{e_{i}} (\phi Y)), e_{i}) \\
&=
-\sum_{i=1}^{n} g( \pi_{\phi} (\phi Y), \nabla_{e_{i}} (\pi^{t}_{L} e_{i})) 
=
g (Y, -H_{\phi} + \xi^{\top}).
\end{align*}
by Lemma \ref{basic relation of dual coframe}. 
It is easy to show that 
$
\sum_{i=1}^{n} e^{i} (\nabla_{e_{i}} (f \xi))  
=0
$
and the proof is done. 
\end{proof}

Note that we can also prove 
Corollary \ref{1st var cor} 
by a direct computation.

\begin{proof}[Proof of Remark \ref{MC in Leg case}]
Since $L$ is Legendrian, 
we have $\pi_{L}^{t} = \pi_{L}$ and $\pi_{\phi}^{t} = \pi_{\phi}$. 
Then 
\begin{align*}
H_{\phi} = - \left( \phi \sum_{i=1}^{n} \pi_{\phi} \nabla_{e_{i}} e_{i} \right)^{\top}
= - \phi \sum_{i=1}^{n} \pi_{\phi} \nabla_{e_{i}} e_{i}.
\end{align*}

Let 
$\pi_{\perp}: \iota^{*}TM = \iota_{*} TL \oplus (\iota_{*} TL)^{\perp} \rightarrow (\iota_{*} TL)^{\perp}$
be the normal projection with respect to $g$. 
Then we see that 
\begin{align*}
H &= \pi_{\perp} \left( \sum_{i=1}^{n} \nabla_{e_{i}} e_{i} \right) 
= \pi_{\phi} \left(\sum_{i=1}^{n} \nabla_{e_{i}} e_{i} \right) 
+ \pi_{\xi} \left(\sum_{i=1}^{n} \nabla_{e_{i}} e_{i} \right), \\
\pi_{\xi} \left(\nabla_{e_{i}} e_{i} \right) 
&=
g(\nabla_{e_{i}} e_{i}, \xi) \xi 
=
e_{i} (g(e_{i}, \xi)) \xi + g(e_{i}, \phi e_{i}) \xi 
= 0,
\end{align*}
which implies the statement. 
\end{proof}

\section{Second variation of the $\phi$-volume}

In this section, 
we compute the second variation of the $\phi$-volume 
and prove Theorems \ref{2nd var phi Vol},  
\ref{thm stablity}, \ref{thm convexity} and \ref{thm obstruction}. 
Use the notation in Section \ref{first var}. 
First, we compute the second variation of the $\phi$-volume form.

\begin{prop} \label{2nd var phi vol form}
Let 
$\iota_{t} : L \hookrightarrow M$ 
be a one-parameter family of affine Legendrian immersions 
satisfying $\iota_{0} = \iota$. 
Set $\frac{\partial \iota_{t}}{\partial t}|_{t=0} = Z \in C^{\infty}(L, \iota^{*}TM).$ 
Then at $x \in L$ we have 
\begin{align*}
\left. \frac{\partial^{2}}{\partial t^{2}} {\rm vol}_{\phi} [\iota_{t}] \right|_{t=0} 
=& 
\left \{ 
-2 \eta^{*} (\phi (Z)) \sum_{i} e^{i} (\nabla_{e_{i}}Z) \right. \\
&- \sum_{i, j} e^{i} (\nabla_{e_{j}} Z) e^{j} (\nabla_{e_{i}} Z)
+ \sum_{i, j} f^{i} (\nabla_{e_{j}} Z) f^{j} (\nabla_{e_{i}} Z)  \\
&+
\sum_{i} e^{i} (R(Z, e_{i})Z + \nabla_{e_{i}} \nabla_{Z} Z) \\
&- \eta(\pi_{L} Z) \eta^{*}(Z) - \eta^{*}(\phi (\nabla_{Z} Z)) - g(Z, \pi_{\phi} Z)\\
&-2 \sum_{i} f^{i}(Z) \eta^{*}(\nabla_{e_{i}} Z)
+2 \sum_{i} e^{i}(Z) \eta^{*}(\phi (\nabla_{e_{i}} Z)) \\
&\left.  + \left( \sum_{i} e^{i} (\nabla_{e_{i}} Z) \right)^{2}
\right \}
{\rm vol}_{\phi} [\iota].
\end{align*}
where
$R$ is the curvature tensor of $(M, g)$.

In particular, 
when $Z= X \in \mathfrak{X}(L)$, we have 
\begin{align*}
\frac{\partial^{2}}{\partial t^{2}} {\rm vol}_{\phi} [\iota_{t}]|_{t=0} 
=& {\rm div} ({\rm div} (\rho_{\phi} X) X) {\rm vol}_{\iota^{*} g}\\
=& 
\left \{ 
- \sum_{i, j} e^{i} (\nabla_{e_{j}} X) e^{j} (\nabla_{e_{i}} X)
+ \sum_{i, j} f^{i} (\nabla_{e_{j}} X) f^{j} (\nabla_{e_{i}} X)
\right. 
\\
&+
\sum_{i} e^{i} (R(X, e_{i})X + \nabla_{e_{i}} \nabla_{X} X) \\
&+ \eta^{*}(\phi (\nabla_{X} X))
\left.  + \left( \sum_{i} e^{i} (\nabla_{e_{i}} X) \right)^{2}
\right \}
{\rm vol}_{\phi} [\iota].
\end{align*}
\end{prop}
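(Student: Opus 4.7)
I would follow the strategy of the proof of Proposition \ref{1st var phi vol form} one step further. Writing ${\rm vol}_{\phi}[\iota_t]_x = \rho_{\phi}(t)(x)\,{\rm vol}_{\iota^*g}$ with $\rho_{\phi}(t)(x)^2 = F(t) := ({\rm vol}_g)(V_1(t),\ldots,V_{2n+1}(t))$, where $V_j(t) = (\iota_t)_* e_j$ for $j\le n$, $V_{n+1}(t) = -\xi_{\iota_t(x)}$, and $V_{n+1+i}(t) = \phi(\iota_t)_* e_i$, the task reduces to computing $\partial_t^2 \rho_{\phi}(t)(x)|_{t=0}$. Since Proposition \ref{1st var phi vol form} applies at every $t$ to give $\rho_{\phi}'(t) = G(t)\rho_{\phi}(t)$ with $G(t) = \sum_i e^i(t)(\nabla_{e_i(t)} Z_t) - \eta^*(t)(\phi Z_t)$, one has $\rho_{\phi}''(0) = (G'(0) + G(0)^2)\rho_{\phi}[\iota]$. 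Expanding $G(0)^2$ immediately delivers the terms $-2\eta^*(\phi Z)\sum_i e^i(\nabla_{e_i} Z)$ and $\bigl(\sum_i e^i(\nabla_{e_i} Z)\bigr)^2$ in the stated formula, together with an extra $(\eta^*(\phi Z))^2$ that must later be cancelled by a contribution coming from $G'(0)$.

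For $G'(0)$, rather than tracking how the coframe $\{e^i(t), f^i(t), \eta^*(t)\}$ evolves, I would compute $F''(0)$ directly and then use $\rho_{\phi}''(0) = F''(0)/(2\rho_{\phi}[\iota]) - (\rho_{\phi}'(0))^2/\rho_{\phi}[\iota]$. Since the Riemannian volume form is parallel, Leibniz gives
\[
F''(0) = \sum_j ({\rm vol}_g)(V_1,\ldots,\nabla^2_{\partial/\partial t} V_j,\ldots) + \sum_{j\ne k}({\rm vol}_g)(V_1,\ldots,\nabla V_j,\ldots,\nabla V_k,\ldots)
\]
at $t=0$. The diagonal terms $\nabla^2 V_j|_{t=0}$ are computed case by case: for $j\le n$, $\nabla^2 V_j = R(Z, e_j) Z + \nabla_{e_j} \nabla_Z Z$ (using the parameterization identity $[Z, e_j] = 0$); for $j = n+1$, $\nabla^2 V_j = \nabla_Z(\phi Z) = g(Z,Z)\xi - \eta(Z)Z + \phi \nabla_Z Z$ by Lemma \ref{Sasaki eq2}; and for $j = n+1+i$, a second Leibniz application to the three-term expression $\nabla V_{n+1+i} = g(Z, e_i)\xi - \eta(e_i) Z + \phi \nabla_{e_i} Z$ produces roughly ten summands which must be reassembled via $\nabla_X \xi = -\phi X$ and $(\nabla_X \phi) Y = g(X,Y)\xi - \eta(Y) X$. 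Contracting with the coframe using Lemma \ref{basic relation of dual coframe} produces the curvature term $e^i(R(Z, e_i) Z + \nabla_{e_i} \nabla_Z Z)$, the acceleration term $-\eta^*(\phi \nabla_Z Z)$, and the lower-order terms $-\eta(\pi_L Z)\eta^*(Z)$ and $-g(Z, \pi_\phi Z)$. The off-diagonal sum supplies the bilinear pieces: pairs with $j,k \le n$ give $-\sum_{i,j} e^i(\nabla_{e_j} Z) e^j(\nabla_{e_i} Z)$; pairs within the $\phi$-block give $\sum_{i,j} f^i(\nabla_{e_j} Z) f^j(\nabla_{e_i} Z)$ after expanding $\nabla(\phi e_i)$; and pairs involving $V_{n+1}$ yield $\pm 2\sum_i f^i(Z)\eta^*(\nabla_{e_i} Z)$ and $\pm 2\sum_i e^i(Z)\eta^*(\phi \nabla_{e_i} Z)$. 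The residual $(\eta^*(\phi Z))^2$ from $G(0)^2$ is absorbed by a matching term from the $j = n+1$ diagonal contribution.

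For the specialization $Z = X \in \mathfrak{X}(L)$, the clean form ${\rm div}({\rm div}(\rho_\phi X) X)\,{\rm vol}_{\iota^*g}$ can be obtained independently by choosing $\iota_t = \iota\circ\varphi_t$ with $\varphi_t$ the flow of $X$: Lemma \ref{diff equiv aff Leg} gives ${\rm vol}_\phi[\iota_t] = \varphi_t^* {\rm vol}_\phi[\iota]$, whence $\partial_t^2 {\rm vol}_\phi[\iota_t]|_{t=0} = L_X^2 {\rm vol}_\phi[\iota]$, and the identity $L_X {\rm vol}_\phi[\iota] = {\rm div}(\rho_\phi X) {\rm vol}_{\iota^* g}$ established in the proof of Corollary \ref{1st var cor} yields the result after a second application of $L_X$. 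The coframe expansion for $Z = X$ follows by direct substitution in the general formula, using $\pi_L X = X$, $\pi_\phi X = 0$, and $\eta^*(X) = \eta(X) - \eta(\pi_L X) = 0$ to collapse many terms. The main obstacle lies in the $j = n+1+i$ diagonal contribution and the mixed off-diagonal sums: each of the numerous summands arising from $\nabla^2 V_{n+1+i}|_{t=0}$ must be contracted correctly via Lemma \ref{basic relation of dual coframe} together with $\eta\circ\phi = 0$ and $\phi\xi = 0$, and the signs and cancellations — especially the absorption of the stray $(\eta^*(\phi Z))^2$ from $G(0)^2$ — require careful tracking.
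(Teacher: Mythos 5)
Your proposal follows essentially the same route as the paper: expand $F''(0)$ by Leibniz into diagonal and off-diagonal terms (the paper's $h_{1},\dots,h_{11}$, grouped into four classes), convert via $\rho_{\phi}''(0) = F''(0)/(2\rho_{\phi}) - (\rho_{\phi}'(0))^{2}/\rho_{\phi}$, and obtain the $Z=X$ case from Lemma \ref{diff equiv aff Leg} as $L_{X}L_{X}{\rm vol}_{\phi}[\iota]$. The only slip is a bookkeeping one: in the paper the $(\eta^{*}(\phi(Z)))^{2}$ cancellation comes from the off-diagonal pairs involving $-\xi$ and the $\phi$-block (the terms $h_{7}=h_{10}$), not from the diagonal $j=n+1$ contribution, but this does not affect the validity of the strategy.
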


By Proposition \ref{2nd var phi vol form}, 
we obtain the second variation formula of the $\phi$-volume (Theorem \ref{2nd var phi Vol}).

\begin{proof}[Proof of Proposition \ref{2nd var phi vol form}]

Use the notation in the proof of Proposition \ref{1st var phi vol form}. 
Set 
\begin{align*}
\nabla_{Z} e_{i} &= \nabla_{\frac{\partial}{\partial t}} (\iota_{t})_{*} e_{i} |_{t=0}, \qquad
\nabla_{Z} (\phi e_{i}) = \nabla_{\frac{\partial}{\partial t}} (\phi (\iota_{t})_{*} e_{i}) |_{t=0}, \\
\nabla_{Z} \nabla_{Z} e_{i} &=
\nabla_{\frac{\partial}{\partial t}} \nabla_{\frac{\partial}{\partial t}} 
(\iota_{t})_{*} e_{i} |_{t=0}, \qquad
\nabla_{Z} \nabla_{Z} (\phi e_{i}) =
\nabla_{\frac{\partial}{\partial t}} \nabla_{\frac{\partial}{\partial t}} 
(\phi (\iota_{t})_{*} e_{i}) |_{t=0}.
\end{align*}
Then we have 
\begin{align*}
\left.
\frac{\partial^{2}}{\partial t^{2}} \rho_{\phi}^{2} (t) 
\right|_{t=0}
= \sum_{i=1}^{11} h_{i}, 
\end{align*}
where 
\begin{align*}
h_{1} &= 
\sum_{i \neq j} {\rm vol}_{g} (e_{1}, \cdots, \nabla_{Z} e_{i}, \cdots, \nabla_{Z} e_{j}, \cdots, e_{n},
-\xi, \phi e_{1}, \cdots, \phi e_{n}), \\
h_{2} &= 
\sum_{i=1}^{n} {\rm vol}_{g} (e_{1}, \cdots, \nabla_{Z} \nabla_{Z} e_{i}, \cdots, e_{n},
-\xi, \phi e_{1}, \cdots, \phi e_{n}), \\
h_{3} &= 
\sum_{i=1}^{n} {\rm vol}_{g} (e_{1}, \cdots, \nabla_{Z} e_{i}, \cdots, e_{n},
-\nabla_{Z} \xi, \phi e_{1}, \cdots, \phi e_{n}), \\
h_{4} &= 
\sum_{i,j=1}^{n} {\rm vol}_{g} (e_{1}, \cdots, \nabla_{Z} e_{i}, \cdots, e_{n},
-\xi, \phi e_{1}, \cdots, \nabla_{Z} (\phi e_{j}), \cdots, \phi e_{n}), \\
h_{5} &=
\sum_{i \neq j} {\rm vol}_{g} (e_{1}, \cdots, e_{n},
-\xi, \phi e_{1}, \cdots, \nabla_{Z} (\phi e_{i}), \cdots, \nabla_{Z} (\phi e_{j}), \cdots, \phi e_{n}), \\
h_{6} &= 
\sum_{i=1}^{n} {\rm vol}_{g} (e_{1}, \cdots, e_{n},
-\xi, \phi e_{1}, \cdots, \nabla_{Z} \nabla_{Z} (\phi e_{i}), \cdots, \phi e_{n}), \\
h_{7} &=
\sum_{i=1}^{n} {\rm vol}_{g} (e_{1}, \cdots, e_{n},
-\nabla_{Z} \xi, \phi e_{1}, \cdots, \nabla_{Z} (\phi e_{i}), \cdots, \phi e_{n}), \\
h_{8} &= 
\sum_{i,j=1}^{n} {\rm vol}_{g} (e_{1}, \cdots, \nabla_{Z} e_{i}, \cdots, e_{n},
-\xi, \phi e_{1}, \cdots, \nabla_{Z} (\phi e_{j}), \cdots, \phi e_{n}), \\
h_{9} &= 
\sum_{i=1}^{n} {\rm vol}_{g} (e_{1}, \cdots, \nabla_{Z} e_{i}, \cdots, e_{n},
-\nabla_{Z} \xi, \phi e_{1}, \cdots, \phi e_{n}), \\
h_{10} &= 
\sum_{i=1}^{n} {\rm vol}_{g} (e_{1}, \cdots, e_{n},
-\nabla_{Z} \xi, \phi e_{1}, \cdots, \nabla_{Z} (\phi e_{i}), \cdots, \phi e_{n}), \\
h_{11} &=
{\rm vol}_{g} (e_{1}, \cdots, e_{n},
-\nabla_{Z} \nabla_{Z} \xi, \phi e_{1}, \cdots, \phi e_{n}).
\end{align*}
We divide $h_{i}$'s into the following four classes and 
compute in each class.

\begin{itemize}
\item[class 1:] $h_{1}, h_{5}$,
\item[class 2:] $h_{2}, h_{6}, h_{11},$
\item[class 3:] $h_{3}= h_{9}, h_{7} = h_{10},$
\item[class 4:] $h_{4} = h_{8}$.
\end{itemize}

We simplify $h_{i}$'s by Lemmas \ref{Sasaki eq1}, \ref{Sasaki eq2} and \ref{basic relation of dual coframe}. 
First, we compute $h_{i}$'s in class 1.
It is easy to see that 
\begin{align*}
h_{1} = \left \{ 
\left( \sum_{i} e^{i} (\nabla_{Z} e_{i}) \right)^{2} 
- 
\sum_{i, j} e^{i} (\nabla_{Z} e_{j}) e^{j} (\nabla_{Z} e_{i})
\right \}
\rho_{\phi}[\iota]^{2}.
\end{align*}
Since 
\begin{align*}
\nabla_{Z} (\phi e_{i}) &= g(Z, e_{i}) \xi - \eta (e_{i}) Z + \phi (\nabla_{Z} e_{i}), \\
f^{i}(\nabla_{Z} (\phi e_{j})) &= - \eta (e_{j}) f^{i}(Z) + e^{i} (\nabla_{Z} e_{j}),
\end{align*}
we have 
\begin{align*}
h_{5} &= 
\sum_{i \neq j} 
\left \{
f^{i}(\nabla_{Z} (\phi e_{i})) f^{j}(\nabla_{Z} (\phi e_{j}))
- 
f^{j}(\nabla_{Z} (\phi e_{i})) f^{i}(\nabla_{Z} (\phi e_{j}))
\right \}
\rho_{\phi}[\iota]^{2}\\
&=
\sum_{i, j} 
\left \{
-2 \eta (e_{i}) f^{i} (Z) e^{j}(\nabla_{Z} e_{j}) 
+2 \eta (e_{i}) f^{j}(Z) e^{i}(\nabla_{Z} e_{j})
\right \}
\rho_{\phi}[\iota]^{2}
+ h_{1}\\
&=
\left \{
2 \eta (\pi_{L} \phi (Z)) 
\sum_{j} e^{j}(\nabla_{Z} e_{j}) 
+2 \sum_{j} \eta (\pi_{L} \nabla_{Z} e_{j}) f^{j}(Z) 
\right \}
\rho_{\phi}[\iota]^{2}
+ h_{1}\\
&=
\left \{
- 2 \eta^{*} (\phi (Z)) 
\sum_{j} e^{j}(\nabla_{Z} e_{j}) 
+2 \sum_{j} \eta (\pi_{L} \nabla_{Z} e_{j}) f^{j}(Z) 
\right \}
\rho_{\phi}[\iota]^{2}
+ h_{1}.
\end{align*}

Next, we compute $h_{i}$'s in class 2. 
We easily see that 
\begin{align*}
h_{2} &= \sum_{i} e^{i} (\nabla_{Z} \nabla_{Z} e_{i}) \rho_{\phi}[\iota]^{2}, \qquad
h_{6} = \sum_{i} f^{i} (\nabla_{Z} \nabla_{Z} (\phi e_{i})) \rho_{\phi}[\iota]^{2}, \\
h_{11} &=  
\eta^{*} (\nabla_{Z} \nabla_{Z} \xi) \rho_{\phi}[\iota]^{2}.
\end{align*}
Set $\nabla_{Z} Z = \nabla_{\frac{\partial}{\partial t}} \tilde{\iota}_{*} (\frac{\partial}{\partial t}) |_{t=0}$, 
where $\tilde{\iota}$ is given 
in proof of Proposition \ref{1st var phi Vol}. 
Since 
\begin{align*}
\nabla_{Z} \nabla_{Z} (\phi e_{i})
=&
Z(g(Z, e_{i})) \xi - g(Z, e_{i}) \phi (Z)
+
\{ g(\phi(Z), e_{i}) - \eta (\nabla_{Z} e_{i}) \} Z \\
&- \eta (e_{i}) \nabla_{Z} Z 
+g(Z, \nabla_{Z} e_{i}) \xi - \eta (\nabla_{Z} e_{i}) Z +
\phi (\nabla_{Z} \nabla_{Z} e_{i}), \\
\nabla_{Z} \nabla_{Z} \xi
=&
- g(Z, Z) \xi + \eta (Z) Z - \phi (\nabla_{Z} Z), 
\end{align*}
we obtain 
\begin{align*}
h_{6}
=&
\left \{
- g(Z, \pi_{L} Z) 
- g(Z, \pi_{\phi} Z) \right.
\\
&\left. -2 \sum_{i} \eta (\nabla_{Z} e_{i}) f^{i}(Z)
- \eta^{*} (\phi (\nabla_{Z} Z))
\right \}
\rho_{\phi}[\iota]^{2}
+ 
h_{2}, \\
h_{11} =& 
\{ - g(Z, Z) + \eta (Z) \eta^{*} (Z) - \eta^{*} (\phi (\nabla_{Z} Z)) \} \rho_{\phi}[\iota]^{2}\\
=& 
\{ - g(Z, \pi_{L} Z) - g(Z, \pi_{\phi} Z) - \eta^{*} (\phi (\nabla_{Z} Z)) \} \rho_{\phi}[\iota]^{2}.
\end{align*}

Compute $h_{i}$'s in class 3 to obtain 
\begin{align*}
h_{3} = h_{9}
=&
\left \{
- \sum_{i} e^{i} (\nabla_{Z} e_{i}) \eta^{*} (\phi(Z))
-
\eta^{*} (\nabla_{Z} e_{i}) f^{i} (Z)
\right \} \rho_{\phi}[\iota]^{2}\\
h_{7} = h_{10}
=&
\left \{
- \sum_{i} \eta^{*}(\phi(Z)) f^{i} (\nabla_{Z} (\phi e_{i})) 
+ 
f^{i} (\phi (Z)) \eta^{*} (\nabla_{Z} (\phi e_{i})) 
\right \} \rho_{\phi}[\iota]^{2}\\
=&
\left \{
\sum_{i} \eta^{*}(\phi(Z)) (\eta (e_{i}) f^{i}(Z) - e^{i} (\nabla_{Z} e_{i})) \right. \\
&\left. +
e^{i} (Z) 
\left( g(Z, e_{i}) - \eta (e_{i}) \eta^{*} (Z) + \eta^{*} (\phi (\nabla_{Z} e_{i}) \right) 
\right \} \rho_{\phi}[\iota]^{2}\\
=&
\left \{
\eta^{*}(\phi(Z))^{2} 
-\eta^{*}(\phi(Z)) \sum_{i}  e^{i} (\nabla_{Z} e_{i}) \right. \\
&
\left. 
+g(Z, \pi_{L} Z) - \eta (\pi_{L} Z) \eta^{*}(Z) + \sum_{i} e^{i}(Z) \eta^{*} (\phi (\nabla_{Z} e_{i})) 
\right \} \rho_{\phi}[\iota]^{2}. 
\end{align*}

Compute $h_{i}$'s in class 4 to obtain 
\begin{align*}
h_{4} = h_{8}
=& \sum_{i, j} \left \{ 
e^{i} (\nabla_{Z} e_{i}) f^{j} (\nabla_{Z} (\phi e_{j}))
- 
f^{j} (\nabla_{Z} e_{i}) e^{i} (\nabla_{Z} (\phi e_{j}))
\right \}
\rho_{\phi}[\iota]^{2}\\
=&
\sum_{i, j} \left \{ 
e^{i} (\nabla_{Z} e_{i}) ( - \eta (e_{j}) f^{j}(Z) +e^{j} (\nabla_{Z} e_{j})) \right. \\
&\left. - 
f^{j} (\nabla_{Z} e_{i}) 
\left( - \eta (e_{j}) e^{i}(Z) + e^{i} (\phi (\nabla_{Z} e_{j})) \right)
\right \}
\rho_{\phi}[\iota]^{2}\\
=&
\left \{ 
- \eta^{*} (\phi (Z))
\sum_{i} e^{i} (\nabla_{Z} e_{i}) 
+ 
\left(\sum_{i} e^{i} (\nabla_{Z} e_{i}) \right)^{2} \right.\\
& \left. 
+ 
\sum_{i} \eta^{*} (\phi (\nabla_{Z} e_{i}))e^{i}(Z) + 
\sum_{i} f^{i}(\nabla_{Z} e_{j}) f^{j} (\nabla_{Z} e_{i})
\right \}
\rho_{\phi}[\iota]^{2}.
\end{align*}
Then by a direct computation, we obtain
\begin{align*}
\left.
\frac{\partial^{2}}{\partial t^{2}} \rho_{\phi}(t) \right|_{t=0}
=&
\frac{1}{2 \rho_{\phi}[\iota]} 
\left( 
\left. 
\frac{\partial^{2} (\rho_{\phi}(t)^{2})}{\partial t^{2}}  \right|_{t=0}
- 2
\left(
\frac{\partial \rho_{\phi}(t)}{\partial t}
\right)^{2}
\right)\\
=&
\frac{1}{2 \rho_{\phi}[\iota]} 
\left( 
\sum_{i=1}^{11} h_{i}
- 2
\left(
\sum_{i} e^{i} (\nabla_{Z} e_{i}) - \eta^{*} (\phi (Z))
\right)^{2} \rho_{\phi}[\iota]^{2}
\right) \\
=&
\left \{ 
-2 \eta^{*} (\phi (Z)) \sum_{i} e^{i} (\nabla_{Z} e_{i}) \right. \\
&- \sum_{i, j} e^{i} (\nabla_{Z} e_{j}) e^{j} (\nabla_{Z} e_{i})
+ \sum_{i, j} f^{i} (\nabla_{Z} e_{j}) f^{j} (\nabla_{Z} e_{i})  \\
&+
\sum_{i} e^{i} (\nabla_{Z} \nabla_{Z} e_{i}) \\
&- \eta(\pi_{L} Z) \eta^{*}(Z) - \eta^{*}(\phi (\nabla_{Z} Z)) - g(Z, \pi_{\phi} Z)\\
&-2 \sum_{i} f^{i}(Z) \eta^{*}(\nabla_{Z} e_{i})
+2 \sum_{i} e^{i}(Z) \eta^{*}(\phi (\nabla_{Z} e_{i})) \\
&\left.  + \left( \sum_{i} e^{i} (\nabla_{Z} e_{i}) \right)^{2}
\right \}
{\rm vol}_{\phi} [\iota]. 
\end{align*}
Here, 
take the normal coordinate $(x_{1}, \cdots, x_{n})$
at $x \in L$ in the proof of Proposition \ref{1st var phi vol form}. 
Then we have 
\begin{align*}
\nabla_{Z} e_{i} = \nabla_{e_{i}} Z, \qquad
\nabla_{Z} \nabla_{Z} e_{i} = R(Z, e_{i})Z + \nabla_{e_{i}} \nabla_{Z} Z
\end{align*}
at $x$, which give the first statement of Proposition \ref{2nd var phi vol form}.

When $Z=X \in \mathfrak{X}(L)$, 
Lemma \ref{diff equiv aff Leg} implies that 
\begin{align*}
\left.
\frac{\partial^{2}}{\partial t^{2}} \rho_{\phi}[\iota] \right|_{t=0}
&=
L_{X} L_{X} {\rm vol}_{\phi}[\iota]\\
&=
d (i(X) d (i(X) \rho_{\phi}[\iota] {\rm vol}_{\iota^{*}g}))
=
{\rm div} ({\rm div}(\rho_{\phi}[\iota] X)X) {\rm vol}_{\iota^{*}g},
\end{align*}
which gives the second statement. 
\end{proof}

We compute the next lemma to
prove Theorem \ref{2nd var phi Vol}.

\begin{lem} \label{computation 2nd var 1}
Suppose that 
$Z= \phi Y + f\xi$, 
where $Y \in \mathfrak{X}(L)$ and $f \in C^{\infty}(L)$. 
Then we have 
\begin{align*}
\sum_{i, j} e^{i} (\nabla_{e_{j}} Z) e^{j} (\nabla_{e_{i}} Z)
&=
n \eta(Y)^{2} + 
2 \eta(Y) \sum_{i} f^{i} (\nabla_{e_{i}} Y) 
+
\sum_{i, j} f^{i} (\nabla_{e_{j}} Y) f^{j} (\nabla_{e_{i}} Y),\\
\sum_{i, j} f^{i} (\nabla_{e_{j}} Z) f^{j} (\nabla_{e_{i}} Z)
&=
n f^{2} 
- 2 f \sum_{i} e^{i} (\nabla_{e_{i}} Y) 
+
\sum_{i, j} e^{i} (\nabla_{e_{j}} Y) e^{j} (\nabla_{e_{i}} Y),
\end{align*}
\begin{align*}
\sum_{i} e^{i}(R(Z, e_{i})Z + R(Y, e_{i})Y) 
=
- {\rm Ric} (Y, Y) + g(Y,Y)+ n \eta (Y)^{2} - n f^{2}, 
\end{align*}
where 
${\rm Ric}$ is the Ricci curvature of $(M,g)$.
\end{lem}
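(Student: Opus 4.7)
The three identities are purely algebraic reductions of the terms appearing in Proposition 5.1 when one specializes to variation fields of the form $Z = \phi Y + f\xi$ with $Y \in \mathfrak{X}(L)$ and $f \in C^\infty(L)$. They all follow from the Sasakian structure equations (Lemma 2.5), the dual-frame identities (Lemma 3.8, in particular $e^i \circ \phi = -f^i$ and $f^i \circ \phi = e^i$ on $\iota^*TM$), and the key observation that $Y \in \iota_*TL$ forces $f^i(Y) = 0$ and $\eta^*(Y) = 0$. The first two identities reduce to a single covariant-derivative computation, whereas the curvature identity requires a nontrivial assembly of partial traces into the ambient Ricci tensor.

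For the first two formulas, Leibniz together with $(\nabla_X \phi)(W) = g(X, W)\xi - \eta(W) X$ and $\nabla_X \xi = -\phi X$ gives
\[
\nabla_{e_j} Z = \phi(\nabla_{e_j} Y) - \eta(Y)\, e_j - f\,\phi(e_j) + (g(e_j, Y) + e_j f)\,\xi.
\]
Pairing with $e^i$ and $f^i$ and using the dual-frame values on $e_k$, $\phi e_k$, $\xi$ collapses this to
\[
e^i(\nabla_{e_j} Z) = -\eta(Y)\,\delta_{ij} - f^i(\nabla_{e_j} Y), \qquad f^i(\nabla_{e_j} Z) = -f\,\delta_{ij} + e^i(\nabla_{e_j} Y).
\]
Multiplying out and summing over $i, j$, the cross terms produce the $2\eta(Y)\sum_i f^i(\nabla_{e_i}Y)$ and $-2f\sum_i e^i(\nabla_{e_i}Y)$ pieces, while the diagonal $\delta_{ij}\delta_{ji}$ contributions give $n\eta(Y)^2$ and $nf^2$, yielding the first two identities.

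For the curvature identity, expand $R(Z, e_i) Z$ bilinearly in $\phi Y + f\xi$ and dispatch the $\xi$-containing pieces using $R(\phi Y, e_i)\xi = \eta(e_i)\phi Y$, $R(\xi, e_i)\xi = -e_i + \eta(e_i)\xi$, and $R(\xi, e_i)\phi Y = -g(\phi e_i, Y)\xi$ (the latter obtained via the identity $R(\xi, Z) X = g(X, Z)\xi - \eta(X) Z$, which follows from $R(X, Y)\xi$ by Bianchi). The principal term $R(\phi Y, e_i)\phi Y$ is converted via the Sasakian $\phi$-curvature identity of Lemma 2.5 into $\phi R(\phi Y, e_i) Y$ plus explicit scalar corrections involving $\phi^2 Y = -Y + \eta(Y)\xi$ and $g(\phi Y, Y) = 0$. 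After applying $e^i$ and using $e^i \circ \phi = -f^i$, $f^i(Y) = 0$, and $\sum_i g(Y, e_i)^2 = g(Y, Y)$, the third sum reduces to
\[
\sum_i\bigl[e^i(R(Y, e_i) Y) - f^i(R(\phi Y, e_i) Y)\bigr] + (1 - n) g(Y, Y) + n\eta(Y)^2 - n f^2.
\]

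The only delicate remaining task is to identify the bracketed combination with $-\mathrm{Ric}(Y, Y) + n g(Y, Y)$. My plan is to invoke the first Bianchi identity
\[
R(Y, e_i)\phi Y - R(\phi Y, e_i) Y = -R(\phi Y, Y) e_i
\]
together with the conjugated $\phi$-curvature identity $\phi R(X, Y)\phi Z = -R(X, Y) Z + g(Y, \phi Z)\phi X - g(X, Z) Y - g(X, \phi Z)\phi Y + g(Y, Z) X$ (obtained by substituting $Z \mapsto \phi Z$ in Lemma 2.5 and using $\phi^2 = -\mathrm{id} + \eta \otimes \xi$). These two ingredients convert the partial traces over $\{e_k\}$ into an expression whose natural completion is the full trace over a $g$-orthonormal basis of $T_{\iota(x)} M$, with the missing $\xi$-direction contributing exactly $g(R(Y, \xi) Y, \xi) = -g(Y, Y) + \eta(Y)^2$ via $R(X, Y)\xi = \eta(Y) X - \eta(X) Y$. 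The main obstacle is this final step: tracking how the partial sums on the \emph{non-orthogonal} affine basis $\{e_k, \phi e_k, \xi\}$ reassemble into the ambient Ricci tensor $\mathrm{Ric}(Y, Y)$ requires careful bookkeeping of the $\eta(e_i)$ and $g(\phi e_j, e_i)$ corrections that arise from the lack of orthogonality, all of which must conspire to cancel so that only the Ricci term and the stated $n g(Y, Y)$ remain.
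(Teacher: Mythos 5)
Your treatment of the first two identities is complete and correct, and coincides with the paper's (both rest on $\nabla_{e_j}Z=\phi(\nabla_{e_j}Y)-\eta(Y)e_j-f\phi(e_j)+(g(e_j,Y)+e_jf)\xi$ and the dual-frame values). Your reduction of the curvature term is also correct as far as it goes: expanding $R(Z,e_i)Z$ bilinearly and applying the $\phi$-curvature identity to $R(\phi Y,e_i)\phi Y$ does yield
\[
\sum_i\bigl[e^i(R(Y,e_i)Y)-f^i(R(\phi Y,e_i)Y)\bigr]+(1-n)g(Y,Y)+n\eta(Y)^2-nf^2 .
\]
But the heart of the lemma is precisely the step you leave as a ``plan'': showing that the bracketed sum equals $-\mathrm{Ric}(Y,Y)+n\,g(Y,Y)$, equivalently that $-\sum_i f^i(R(\phi Y,e_i)Y)=\sum_i f^i(R(Y,\phi e_i)Y)+(n-1)g(Y,Y)$. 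You do not carry this out, and you yourself flag the bookkeeping as an unresolved obstacle, so the proof is incomplete where it matters most.

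Two concrete concerns with the sketched plan. First, there is no need to ``complete to a $g$-orthonormal basis'': the trace defining $\mathrm{Ric}(Y,Y)=\mathrm{tr}\,(X\mapsto R(X,Y)Y)$ can be taken in the non-orthogonal affine basis $\{e_i,\phi e_i,\xi\}$ provided you pair with its genuine dual basis $\{e^i,f^i,\eta^*\}$, which makes all the $\eta(e_i)$ and $g(\phi e_j,e_i)$ corrections you worry about disappear automatically; with this convention the $\xi$-slot contributes $\eta^*(R(\xi,Y)Y)=g(Y,Y)$, \emph{not} $g(R(\xi,Y)Y,\xi)=g(Y,Y)-\eta(Y)^2$, and your stated value $-g(Y,Y)+\eta(Y)^2$ indicates the two conventions are being mixed — left uncorrected this would shift the coefficient of $\eta(Y)^2$ in the final answer. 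Second, the remaining identity is not obtained in the paper via the first Bianchi identity at all: the paper writes $-\sum_i f^i(R(\phi Y,e_i)Y)=\sum_i g(\pi_L\phi R(Z,e_i)Y,e_i)=-\sum_i g(R(Y,\phi\pi_L^t e_i)Z,e_i)$ using the pair symmetry $g(R(A,B)C,D)=g(R(C,D)A,B)$ together with the transpose $\pi_L^t$, applies the $\phi$-curvature identity a second time to $R(Y,\phi\pi_L^t e_i)Z$, and lands directly on $\sum_i f^i(R(Y,\phi e_i)Y)+(n-1)g(Y,Y)$, after which the dual-basis trace gives the Ricci term. Whether your Bianchi-plus-conjugated-identity route closes up with the same constants is exactly what has not been checked; as written, the argument has a genuine gap at this step.
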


\begin{proof}[Proof of Lemma \ref{computation 2nd var 1}]
The first two equations follow by the next equation:
\begin{align*}
\nabla_{X} Z = g(X, Y) \xi - \eta(Y) X 
+ \phi (\nabla_{X} Y) + X(f) \xi -f \phi (X), 
\end{align*}
where $X \in \mathfrak{X}(L)$ is a vector field on $L$.

We prove the third equation. 
By Lemma  \ref{Sasaki eq2},
we see that 
\begin{align*}
\sum_{i} e^{i} (R(Z, e_{i})Z)
=&
\sum_{i} e^{i} (R(Z, e_{i})(\phi Y + f \xi))\\
=& 
\sum_{i} 
e^{i}
\left( \phi (R(Z, e_{i})Y) - g(e_{i}, Y) \phi (Z)
+g(\phi Z, Y) e_{i} \right. \\
&\left. + g(Z, Y) \phi (e_{i}) - g(\phi e_{i}, Y) Z 
+ f (\eta (e_{i}) Z - \eta (Z) e_{i})
\right)\\
=&
\sum_{i} e^{i} \circ \phi (R(Z, e_{i})Y) 
+ (-n+1) g(Y,Y) 
+n \eta(Y)^{2} - n f^{2}. 
\end{align*}
The first term is computed as 
\begin{align*}
\sum_{i} e^{i} \circ \phi (R(Z, e_{i})Y) 
=
\sum_{i} g(\pi_{L} \phi R(Z, e_{i}) Y, e_{i})
=
- \sum_{i} g(R(Y, \phi \pi_{L}^{t} e_{i}) Z, e_{i}), 
\end{align*}

\begin{align*}
R(Y, \phi \pi_{L}^{t} e_{i}) Z
=&
\phi (R(Y, \phi \pi_{L}^{t} e_{i})Y) - g(\phi \pi_{L}^{t} e_{i}, Y) \phi (Y)
+ g(\phi Y, Y) \phi \pi_{L}^{t} e_{i} \\
&+ g(Y, Y) \phi^{2} \pi_{L}^{t} e_{i}
- g(\phi^{2} \pi_{L}^{t} e_{i}, Y) Y 
+ f(\eta(\phi \pi_{L}^{t} e_{i}) Y - \eta(Y) \phi \pi_{L}^{t} e_{i})\\
=&
\phi (R(Y, \phi \pi_{L}^{t} e_{i})Y) 
- g(Y, Y) \pi_{L}^{t} e_{i}
+ g(e_{i}, Y) Y 
- f \eta(Y) \phi \pi_{L}^{t} e_{i}.
\end{align*}
Thus we obtain 
\begin{align*}
\sum_{i} e^{i} \circ \phi (R(Z, e_{i})Y) 
=&
\sum_{i}
g(-\phi (R(Y, \phi \pi_{L}^{t} e_{i})Y) 
+ g(Y, Y) \pi_{L}^{t} e_{i}
- g(e_{i}, Y) Y, e_{i})\\
=&
\sum_{i} 
g(R(Y, \phi e_{i}) Y, \phi \pi_{L}^{t} e_{i}) 
+ (n-1) g(Y, Y)\\
=&
- \sum_{i} g(\pi_{L} \phi R(Y, \phi e_{i})Y, e_{i}) 
+ (n-1) g(Y, Y)\\
=&
\sum_{i} f^{i} (R(Y, \phi e_{i})Y) + (n-1) g(Y, Y). 
\end{align*}
Since 
\begin{align*}
\eta^{*} (R(\xi, Y)Y)
&=
g(\pi_{\xi} R(\xi, Y)Y, \xi)
=
g(R(Y, \pi_{\xi}^{t} \xi)\xi, Y)\\
&=
g(\eta(\pi_{\xi}^{t} \xi)Y - \eta (Y) \pi_{\xi}^{t} \xi, Y)
=
g(Y, Y),
\end{align*}
it follows that  
\begin{align*}
&\sum_{i} e^{i} (R(Z, e_{i})Z + R(Y, e_{i})Y) \\
=&
- \sum_{i} (f^{i} (R(\phi e_{i}, Y)Y) + e^{i} (R(e_{i},Y)Y))
+ (n-1) g(Y,Y) \\
&+ (-n+1) g(Y,Y) 
+ n \eta (Y)^{2} - n f^{2}\\
=&
- {\rm Ric} (Y, Y) + g(Y,Y) + n \eta (Y)^{2} - n f^{2}. 
\end{align*}
\end{proof}

\begin{proof}[Proof of Theorem \ref{2nd var phi Vol}]

Set 
$Z= \phi Y + f\xi$.
By Proposition \ref{2nd var phi vol form}
and 
Lemma
\ref{computation 2nd var 1}, 
we have 
\begin{align*}
\left. \frac{\partial^{2}}{\partial t^{2}} {\rm vol}_{\phi} [\iota_{t}] \right|_{t=0} 
=& 
\rho_{\phi}[\iota]
\left \{ 
-2 \eta(Y) \sum_{i} e^{i} (\nabla_{e_{i}}Z) \right. \\
&- \sum_{i, j} e^{i} (\nabla_{e_{j}} Z) e^{j} (\nabla_{e_{i}} Z)
+ \sum_{i, j} f^{i} (\nabla_{e_{j}} Z) f^{j} (\nabla_{e_{i}} Z)  \\
&+
\sum_{i} e^{i} (R(Z, e_{i})Z + \nabla_{e_{i}} \nabla_{Z} Z) 
+ \left( \sum_{i} e^{i} (\nabla_{e_{i}} Z) \right)^{2}
\\
&
\left. 
- \eta^{*}(\phi (\nabla_{Z} Z)) 
- g(Z, \phi Y) -2 \eta^{*}(\nabla_{Y} Z) 
\right \}\\
=&
- {\rm div} ({\rm div} (\rho_{\phi}[\iota] Y) Y)\\
&+\rho_{\phi}[\iota]  
\left \{
-2 \eta(Y) \sum_{i} e^{i} (\nabla_{e_{i}} Z) 
-2 \eta(Y) \sum_{i} f^{i} (\nabla_{e_{i}} Y) \right. \\
&
-2 f \sum_{i} e^{i} (\nabla_{e_{i}} Y) 
- {\rm Ric}(Y, Y)
+ \sum_{i} e^{i}(\nabla_{e_{i}}(\nabla_{Z} Z + \nabla_{Y} Y))\\
&
+ 
\left( \sum_{i} e^{i} (\nabla_{e_{i}} Z) \right)^{2}
+
\left( \sum_{i} e^{i} (\nabla_{e_{i}} Y) \right)^{2}\\
&
\left. 
- \eta^{*}(\phi (\nabla_{Z} Z)) + \eta^{*}(\phi (\nabla_{Y} Y))
-2 \eta^{*}(\nabla_{Y} Z)
+ \eta (Y)^{2}
\right \}. 
\end{align*}
From Corollary \ref{1st var cor}, we have 
\begin{align*}
\sum_{i} e^{i} (\nabla_{e_{i}} Z) = - g(Y, H_{\phi}) + \eta (Y), \qquad
\sum_{i} f^{i} (\nabla_{e_{i}} Y) = g(Y, H_{\phi}) - (n+1) \eta (Y), 
\end{align*}
which imply that
\begin{align} \label{comp 2nd var 1} 
-2 \eta (Y) \sum_{i} e^{i} (\nabla_{e_{i}} Z) 
-2 \eta (Y) \sum_{i} f^{i} (\nabla_{e_{i}} Y)
= 
2n \eta(Y)^{2}.
\end{align}
Since we know that 
\begin{align*}
\nabla_{Z} Z
&=
- \eta(Y) Z + \phi (\nabla_{Z} Y) + f Y + Z(f) \xi, \\
\nabla_{Y} Y
&=
- \phi \nabla_{Y} (\phi Y) + \eta (\nabla_{Y} Y) \xi - \eta(Y) \phi (Y), \\
\nabla_{Z} Z + \nabla_{Y} Y 
&=
\phi ([Z, Y]) -2 \eta(Y) \phi (Y) 
+2 f Y + (Z(f)+ \eta(\nabla_{Y} Y) -2 f \eta (Y)) \xi, 
\end{align*}
we see by Corollary \ref{1st var cor} that 
\begin{align*}
- \eta^{*}(\phi (\nabla_{Z} Z)) 
=
\eta (Y)^{2} - \eta (\pi_{L} (\nabla_{Z} Y)), \qquad
\eta^{*}(\phi (\nabla_{Y} Y))
= 
- \eta (\pi_{L} (\nabla_{Y} Z)) -  \eta (Y)^{2}.
\end{align*}
Using the equation
$\eta (\nabla_{Y} Z) = Y(f) + g(Y, Y) - \eta (Y)^{2}$, 
it follows that 
\begin{align} \label{comp 2nd var 2}
\begin{split}
- \eta^{*}(\phi (\nabla_{Z} Z)) + \eta^{*}(\phi (\nabla_{Y} Y))
-2 \eta^{*}(\nabla_{Y} Z)\\
=
\eta(\pi_{L} [Y, Z]) 
-2 Y(f) -2 g(Y,Y) +2 \eta(Y)^{2}.
\end{split}
\end{align}
We can also compute 
\begin{align} \label{comp 2nd var 3}
\begin{split}
\sum_{i} e^{i}(\nabla_{e_{i}}(\nabla_{Z} Z + \nabla_{Y} Y))
=& 
\frac{{\rm div}(\rho_{\phi}[\iota] (\pi_{L} \phi ([Z, Y]) + 2 f Y))}{\rho_{\phi}[\iota]}\\
&+
g(- \pi_{L} [Z, Y] + 2 \eta (Y) Y, H_{\phi}) \\
&+ \eta (\pi_{L} [Z, Y]) -2 \eta (Y)^{2}.
\end{split}
\end{align}
Note that 
\begin{align} \label{comp 2nd var 4}
f {\rm div} (\rho_{\phi}[\iota] Y) + \rho_{\phi}[\iota] Y(f)
=
{\rm div} (f \rho_{\phi}[\iota] Y). 
\end{align}
Then by 
(\ref{comp 2nd var 1}), (\ref{comp 2nd var 2}), (\ref{comp 2nd var 3}), 
(\ref{comp 2nd var 4})
and Corollary \ref{1st var cor}, 
we obtain  
\begin{align*}
\left.
\frac{\partial^{2}}{\partial t^{2}} \rho_{\phi}(t) \right|_{t=0}
=&
- {\rm div} ({\rm div} (\rho_{\phi}[\iota] Y) Y)
+ {\rm div}(\rho_{\phi}[\iota] (\pi_{L} \phi ([Z, Y]) + 2f Y))
-2 {\rm div} (f \rho_{\phi}[\iota] Y) 
\\
&+\rho_{\phi}[\iota]  
\left \{
(2n+2) \eta(Y)^{2} - {\rm Ric}(Y, Y) -2 g(Y, Y) \right. \\
&\left.
- g(\pi_{L} [Z, Y], H_{\phi})
+
g(Y, H_{\phi})^{2}
+
\frac{({\rm div}(\rho_{\phi}[\iota] Y))^{2}}{\rho_{\phi}[\iota]^{2}}
\right \},
\end{align*}
which implies Theorem \ref{2nd var phi Vol}.
\end{proof}

We investigate the relation 
of Theorem \ref{2nd var phi Vol} 
and the previous works. 
Define the standard Riemannian volume of $\iota$ by 
${\rm Vol}[\iota] = \int_{L} {\rm vol}_{\iota^{*}g}$.

\begin{rem} \label{relation Lmin}
We call a Legendrian immersion $\iota$ Legendrian-minimal Legendrian 
if it is a critical point of the standard Riemannian volume functional 
under Legendrian variations. 

Suppose that $\iota$ is Legendrian-minimal Legendrian 
and all of $\iota_{t}$'s are Legendrian in Theorem \ref{2nd var phi Vol}. 
Then for any $t$, the $\phi$-volume agrees with the standard Riemannian volume 
and the second variation formula of Theorem \ref{2nd var phi Vol} 
is given by 
\begin{align*}
\left. \frac{d^{2}}{dt^{2}} {\rm Vol}[\iota_{t}] \right|_{t=0} = &
\int_{L} 
\left(
\frac{1}{4} (\Delta f)^{2} - 2 g(Y, Y) - {\rm Ric}(\phi Y, \phi Y)  
 \right. \\
&
\left. 
-2 g(\nabla_{Y} Y, H)
+ g(Y, H)^{2}
\right)
{\rm vol}_{\iota^{*}g}, 
\end{align*}
where $H$ is the mean curvature vector of $L$ 
and $\Delta$ is the Laplacian acting on $C^{\infty}(L)$. 
This formula agrees with \cite[Theorem 1.1]{Kajigaya}. 
Thus 
when $\iota$ is minimal Legendrian and all of $\iota_{t}$'s are Legendrian, 
it agrees with \cite[Theorem 1.1]{Ono}.
\end{rem}

\begin{proof}
Since $\iota$ is Legendrian, we see that 
\begin{align} \label{Lmin comp 1}
\eta (Y)=0, \qquad 
\rho_{\phi}[\iota] = 1, \qquad
H_{\phi} = -\phi H, 
\end{align}
by Lemma \ref{equality rho phi} and Remark \ref{MC in Leg case}. 
Since all of $\iota_{t}$'s are Legendrian, we have $L_{Z} \eta = 2 g(Y, \cdot) + df =0$, 
which implies that 
\begin{align} \label{Lmin comp 2}
{\rm div}(Y) = \frac{1}{2} \Delta f.
\end{align}
A direct computation gives 
\begin{align} \label{Lmin comp 3}
\begin{split}
{\rm Ric}(Y, Y)            &= {\rm Ric}(\phi Y, \phi Y), \\
-g(\pi_{L}[Z, Y], H_{\phi}) &= g(\nabla_{Z} Y - \nabla_{Y} Z, \phi H), \\
g(\nabla_{Z} Y , \phi H) &= - g(\nabla_{Z}Z, H) \\
g(\nabla_{Y} Z, \phi H)  &= g(\nabla_{Y}Y, H).
\end{split}
\end{align}
By \cite[Lemma 4.1]{Kajigaya}, we have 
\begin{align} \label{Lmin comp 4}
\int_{L} g(\nabla_{Z} Z, H) {\rm vol}_{\iota^{*}g} 
=
\int_{L} g(\nabla_{Y} Y, H) {\rm vol}_{\iota^{*}g}, 
\end{align}
where we use 
an integration by parts argument 
and 
the Legendrian-minimality of $\iota$, 
which is equivalent to ${\rm div}(\phi H) = 0$ 
\cite[Theorem 3.6]{Kajigaya}. 
Then we obtain the statement by
(\ref{Lmin comp 1}), (\ref{Lmin comp 2}), (\ref{Lmin comp 3}) and (\ref{Lmin comp 4}). 
\end{proof}

Now we prove Theorems \ref{thm stablity}, \ref{thm convexity} and \ref{thm obstruction}. 

\begin{proof}[Proof of Theorem \ref{thm stablity}]
Let $\iota: L^{n}  \hookrightarrow M^{2n+1}$ be 
a $\phi$-minimal affine Legendrian submanifold. 
By definition, we have $H_{\phi} = 0$. 
Since $M^{2n+1}$ is a $\eta$-Einstein Sasakian manifold with the 
$\eta$-Ricci constant $A$, 
we see from Definition \ref{def of eta Einstein} that
\begin{align} \label{eta Einstein eq}
(2n+2) \eta (Y)^{2} - 2 g(Y, Y) - {\rm Ric}(Y, Y) 
= (A+2) \left( \eta(Y)^{2} - g(Y,Y) \right) 
\end{align}
for $Y \in TL$. 
By the third equation of Definition \ref{def of Sasakian 1}, 
we have $\eta(Y)^{2} - g(Y,Y) \leq 0$. 
Then Theorem \ref{2nd var phi Vol} 
implies Theorem \ref{thm stablity}. 
\end{proof}

\begin{proof}[Proof of Theorem \ref{thm convexity}]
Recall Lemma \ref{def of geodesic}. 
Let $\{ L_{t} \} \subset \mathcal{A}$ be a geodesic. 
Then there exists 
a curve 
of affine Legendrian embeddings $\{ \iota_{t} \}$, a fixed vector field $Y \in \mathfrak{X}(L)$
and a function $f \in C^{\infty}(L)$
such that 
$\pi (\iota_{t}) = L_{t}$, 
\begin{align} \label{Y Z commute}
\frac{d \iota_{t}}{dt} = \phi (\iota_{t})_{*} Y + f \xi \circ \iota_{t} \qquad \mbox{and} \qquad 
[ (\iota_{t})_{*} Y, \phi  (\iota_{t})_{*} Y + f \xi \circ \iota_{t}] = 0. 
\end{align}
Then 
by Theorem \ref{2nd var phi Vol}, (\ref{eta Einstein eq}) and $(\ref{Y Z commute})$, 
we obtain Theorem \ref{thm convexity}. 
\end{proof}

\begin{proof}[Proof of Theorem \ref{thm obstruction}]
Let $\iota: L^{n} \hookrightarrow M^{2n+1}$ be a $\phi$-minimal affine 
Legendrian submanifold.
Since $M^{2n+1}$ is a $\eta$-Einstein Sasakian manifold with the 
$\eta$-Ricci constant $A > -2$, 
we have 
\begin{align*}
(2n+2) \eta (Y)^{2} - 2 g(Y, Y) - {\rm Ric}(Y, Y) 
= (A+2) \left( \eta(Y)^{2} - g(Y,Y) \right) <0
\end{align*}
for any $0 \neq Y \in TL$ 
by the third equation of Definition \ref{def of Sasakian 1} and Definition \ref{def of aff Leg}. 
Take a 1-form 
$0 \neq\alpha\in\Omega^{1}(L)$ such that 
$d^{*} \alpha= 0$. 
For example, set 
$\alpha = d^{*} \beta$ for a 2-form $\beta$. 
Define the vector field $Y \in \mathfrak{X}(L)$ on $L$ via 
\begin{align*}
\iota^{*} g (\rho_{\phi}[\iota] Y, \cdot) = \alpha. 
\end{align*}
Then we easily see that ${\rm div}(\rho_{\phi}[\iota] Y) = - d^{*} \alpha = 0$. 
By Theorem \ref{2nd var phi Vol}, 
the second variation 
of the $\phi$-volume 
for this $Y$ is given by  
\begin{align*}
\left. \frac{d^{2}}{d t^{2}} 
\int_{L} {\rm vol}_{\phi} [\iota_{t}] \right|_{t=0} 
= 
\int_{L} 
\left(
 (A+2) \left( \eta(Y)^{2} - g(Y,Y) \right) 
\right)
{\rm vol}_{\phi}[\iota] < 0,
\end{align*}
which implies that $\iota: L \hookrightarrow M$ is not $\phi$-stable. 
\end{proof}

\section{$\phi$-volume in Sasaki-Einstein manifolds}

\subsection{$J$-volume in Calabi-Yau manifolds} \label{J-vol in CY}

\begin{definition}
Let $(X, h, J, \omega)$ be a real $2n$-dimensional K\"{a}hler manifold with a K\"{a}hler metric $h$, 
a complex structure $J$ and an associated K\"{a}hler form $\omega$.
Suppose that there exists 
a nowhere vanishing holomorphic $(n, 0)$-form $\Omega$ on $X$ satisfying 
\begin{align} \label{CYcondition}
\omega^{n}/n! = (-1)^{n(n-1)/2} (i/2)^{n} \Omega \wedge \bar{\Omega}, 
\end{align}
Then a quintuple $(X, h, J, \omega, \Omega)$  is called a
 {\bf Calabi-Yau manifold}. 
\end{definition}

\begin{rem}
The condition (\ref{CYcondition}) implies that 
$h$ is Ricci-flat and 
$\Omega$ is parallel with respect to the Levi-Civita connection of $h$.
\end{rem}

In Section \ref{J-vol in CY}, 
we suppose that $(X, h, J, \omega, \Omega)$  is a  real $2n$-dimensional Calabi-Yau manifold.

\subsubsection{Special Lagrangian geometry}

Define the {\bf Lagrangian angle} $\theta_{N}: N \rightarrow \mathbb{R}/ 2 \pi \mathbb{Z}$ 
of a Lagrangian immersion  $f: N \hookrightarrow X$ by
\begin{align*}
f^{*} \Omega = e^{i \theta_{N}} {\rm vol}_{f^{*}h}. 
\end{align*}
This is well-defined because 
$
|f^{*} \Omega (e_{1}, \cdots, e_{n})| = 1 
$
for any orthonormal basis $\{ e_{1}, \cdots, e_{n} \}$ of $T_{x}N$ for $x\in N$, 
which is proved in \cite[Theorem I\hspace{-.1em}I\hspace{-.1em}I.1.7]{HarveyLawson}. 
It also implies that 
Re$(e^{i \theta} \Omega)$ defines a calibration on $X$ for any $\theta \in \mathbb{R}$.

A Lagrangian immersion  $f: N \hookrightarrow X$
is called 
{\bf special Lagrangian}
if $f^{*} {\rm Re} \Omega = {\rm vol}_{f^{*}h}$, 
namely, the Lagrangian angle is $0$.

Minimal Lagrangian submanifolds 
are characterized 
in terms of special Lagrangian submanifolds 
as follows. 
For example, see \cite[Lemma 8.1]{LotayPacini}

\begin{lem} \label{equivalent min Lag}
Let $f: N \hookrightarrow X$ be an immersion 
of an oriented connected $n$-dimensional manifold $N$. 
The following are equivalent.
\begin{enumerate}[(a)]
\item $f^{*} {\rm Re}(e^{i \theta} \Omega) = {\rm vol}_{f^{*} h}$ for some $\theta \in \mathbb{R}$;
\item $f^{*} \omega = 0$ and $f^{*} {\rm Im}(e^{i \theta} \Omega) = 0$ for some $\theta \in \mathbb{R}$;
\item $f^{*} \omega = 0$ and the Lagrangian angle $\theta_{N}$ is constant;
\item $f: N \hookrightarrow X$ is minimal Lagrangian.
\end{enumerate}
\end{lem}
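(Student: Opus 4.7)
The plan is to exploit two standard facts: (i) that $\mathrm{Re}(e^{i\theta}\Omega)$ is a calibration on $X$ (with equality only on special Lagrangian planes of the correct phase), and (ii) the classical identity $H = J\nabla\theta_N$ for the mean curvature of a Lagrangian immersion in a Calabi--Yau manifold, expressed in terms of the Lagrangian angle. With these, the four conditions fit into a short cycle.

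First I would prove (a) $\Rightarrow$ (b). The comparison inequality for the calibration $\mathrm{Re}(e^{i\theta}\Omega)$ states that on any oriented $n$-plane $P \subset T_xX$, we have $\mathrm{Re}(e^{i\theta}\Omega)|_P \leq \mathrm{vol}_P$, with equality if and only if $P$ is Lagrangian with Lagrangian angle $-\theta$. The equality in (a) forces $f_\ast(T_xN)$ to be such a plane at every $x$, which gives both $f^\ast\omega = 0$ and $f^\ast\mathrm{Im}(e^{i\theta}\Omega) = 0$, hence (b).

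Next, (b) $\Rightarrow$ (c): once $f^\ast\omega = 0$ the Lagrangian angle $\theta_N$ is defined by $f^\ast\Omega = e^{i\theta_N}\mathrm{vol}_{f^\ast h}$, so
\begin{equation*}
f^\ast(e^{i\theta}\Omega) = e^{i(\theta + \theta_N)}\,\mathrm{vol}_{f^\ast h}.
\end{equation*}
The hypothesis $f^\ast\mathrm{Im}(e^{i\theta}\Omega) = 0$ forces $\theta + \theta_N \in \pi\mathbb{Z}$ pointwise, and since $\theta_N$ is smooth and $N$ is connected, $\theta_N$ must be constant, giving (c). The implication (c) $\Rightarrow$ (a) is then immediate: if $\theta_N \equiv \theta_0$, choose $\theta = -\theta_0$ and read off $f^\ast\mathrm{Re}(e^{i\theta}\Omega) = \mathrm{vol}_{f^\ast h}$.

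Finally, for (c) $\Leftrightarrow$ (d), I would invoke the standard computation that for a Lagrangian immersion into a Calabi--Yau manifold, the mean curvature vector satisfies $H = J f_\ast(\nabla \theta_N)$, where $\nabla\theta_N$ is the gradient with respect to $f^\ast h$. Therefore $H \equiv 0$ if and only if $\theta_N$ is constant (here connectedness of $N$ again plays a role), giving the equivalence with (c). The only step that is not essentially formal is the calibration equality characterization used in (a) $\Rightarrow$ (b) and the derivation of $H = J\nabla\theta_N$; both are classical (see Harvey--Lawson), so the main obstacle is just stating them cleanly, and no genuinely hard computation is required.
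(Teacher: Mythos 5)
Your proposal is correct. The paper does not actually prove this lemma --- it defers to \cite[Lemma 8.1]{LotayPacini} (and, for the calibration bound $|f^{*}\Omega(e_{1},\dots,e_{n})|\leq 1$ with equality exactly on Lagrangian planes, to Harvey--Lawson) --- and your argument, combining the calibration equality case with the identity $H=Jf_{*}(\nabla\theta_{N})$ and connectedness of $N$, is precisely the standard proof those references give.
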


\subsubsection{Special affine Lagrangian geometry}

By using a $J$-volume, 
we can generalize the notion of calibrations. 

\begin{lem} [{\cite[Lemma 8.2]{LotayPacini}}] \label{J calibration}
Let $f: N \hookrightarrow X$ be an affine Lagrangian immersion 
of an oriented $n$-dimensional manifold $N$. 
Then we have  
\begin{align*}
f^{*} {\rm Re}\Omega \leq {\rm vol}_{J}[f] \leq {\rm vol}_{f^{*}h}.
\end{align*}
The equality holds 
\begin{itemize}
\item
in the first relation if and only if $f^{*} {\rm Im}\Omega =0$ and $f^{*} {\rm Re}\Omega > 0$, 
\item
and in the second relation if and only if $f$ is Lagrangian.
\end{itemize}
\end{lem}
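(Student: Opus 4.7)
The plan is to show pointwise that $f^{*}\Omega = e^{i\theta_{N}}\,{\rm vol}_{J}[f]$ for a smooth ``affine Lagrangian angle'' function $\theta_{N}: N \to \mathbb{R}/2\pi\mathbb{Z}$, and to read off both inequalities as transparent consequences.

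To establish this pointwise identity, I would fix $x \in N$, pick local holomorphic coordinates on $X$ near $f(x)$ in which $h$ agrees with the standard Hermitian metric and $\Omega = dz_{1}\wedge \cdots \wedge dz_{n}$ at $f(x)$, and choose an oriented orthonormal basis $\{e_{1},\ldots,e_{n}\}$ of $T_{x}N$. Setting $v_{i} := f_{*}e_{i} \in \mathbb{C}^{n}$, we have $f^{*}\Omega(e_{1},\ldots,e_{n}) = \det_{\mathbb{C}}(v_{1},\ldots,v_{n})$. The key linear-algebra identity
\[
\det_{\mathbb{R}}(v_{1},\ldots,v_{n},Jv_{1},\ldots,Jv_{n}) \;=\; |\det_{\mathbb{C}}(v_{1},\ldots,v_{n})|^{2}
\]
follows by writing the real $2n\times 2n$ matrix on the left in block form $\begin{pmatrix} R & -S \\ S & R \end{pmatrix}$ with $R + iS = (v_{1}\,|\,\cdots\,|\,v_{n})$ and applying a standard block diagonalisation. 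Combined with Definition \ref{def of J vol}, this gives $\rho_{J}[f](x) = |\det_{\mathbb{C}}(v_{1},\ldots,v_{n})| = |f^{*}\Omega(e_{1},\ldots,e_{n})|$.

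Hence ${\rm vol}_{J}[f] = |f^{*}\Omega|$ as $n$-forms on $N$, and writing $f^{*}\Omega = e^{i\theta_{N}}\,{\rm vol}_{J}[f]$ defines a smooth function $\theta_{N}$ because $\rho_{J}[f] > 0$ by the affine Lagrangian hypothesis. Taking real parts yields
\[
f^{*}\,\mathrm{Re}\,\Omega \;=\; \cos\theta_{N}\cdot {\rm vol}_{J}[f] \;\leq\; {\rm vol}_{J}[f],
\]
with equality iff $\cos\theta_{N}\equiv 1$, i.e.\ iff $f^{*}\,\mathrm{Im}\,\Omega = 0$ together with $f^{*}\,\mathrm{Re}\,\Omega = {\rm vol}_{J}[f] > 0$. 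The second inequality ${\rm vol}_{J}[f] \leq {\rm vol}_{f^{*}h}$ and its equality case follow directly from Lemma \ref{equality rho J}.

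The only step requiring genuine care is the pointwise identity above: one must track the K\"ahler orientation convention so that both sides agree as signed quantities (and not merely up to sign), which is what makes the identification ${\rm vol}_{J}[f] = |f^{*}\Omega|$ unambiguous and allows the square root in the definition of $\rho_{J}[f]$ to be resolved. The Calabi-Yau normalisation (\ref{CYcondition}) is used only to guarantee the existence of a global nowhere vanishing $\Omega$; the inequalities themselves are purely algebraic and pointwise.
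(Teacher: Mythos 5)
Your argument is correct and follows the route the paper itself relies on: the paper does not reprove this lemma (it quotes \cite[Lemma 8.2]{LotayPacini}), but the identity you establish, $\rho_{J}[f]=|f^{*}\Omega(e_{1},\dots,e_{n})|$, is exactly (\ref{norm Omega rho J}) (\cite[Lemma 7.2]{LotayPacini}), from which the angle function, both inequalities, and the equality cases follow just as you describe, with the second inequality reduced to Lemma \ref{equality rho J}. One small correction to your closing remark: the normalisation (\ref{CYcondition}) is not used merely to guarantee a global nowhere vanishing $\Omega$ --- it is what lets you choose unitary coordinates at $f(x)$ with $\Omega = dz_{1}\wedge\cdots\wedge dz_{n}$ there (rather than a nonzero multiple), and without it the comparison $f^{*}\mathrm{Re}\,\Omega \leq {\rm vol}_{J}[f]$ with constant $1$ would fail; your proof does use this correctly, only the commentary misstates it.
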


Following \cite[Section 7.1]{LotayPacini}, 
define the {\bf affine Lagrangian angle} $\theta_{N}: N \rightarrow \mathbb{R}/ 2 \pi \mathbb{Z}$ 
of an affine Lagrangian immersion  $f: N \hookrightarrow X$ by
\begin{align*}
f^{*} \Omega = e^{i \theta_{N}} {\rm vol}_{J}[f]. 
\end{align*}
This is well-defined because 
\begin{align} \label{norm Omega rho J}
|f^{*} \Omega (e_{1}, \cdots, e_{n})| = \rho_{J}[f]
\end{align}
for any orthonormal basis $\{ e_{1}, \cdots, e_{n} \}$ of $T_{x}N$ for $x\in N$. 
The equation (\ref{norm Omega rho J})
is proved in \cite[Lemma 7.2]{LotayPacini}. 
We can also prove this directly by a pointwise calculation. 

We call an affine Lagrangian immersion $f: N \hookrightarrow X$
{\bf special affine Lagrangian}
if $f^{*} {\rm Re}\Omega = {\rm vol}_{J}[f] $, 
namely, the affine Lagrangian angle is $0$.

\begin{rem}
When $f: N \hookrightarrow X$ is Lagrangian, we have 
${\rm vol}_{J}[f] = {\rm vol}_{f^{*}h}$ by Lemma \ref{equality rho J}. 
Then 
the affine Lagrangian angle agrees with 
the standard Lagrangian angle. 
\end{rem}

We have an analogue of Lemma \ref{equivalent min Lag} 
given in \cite[Lemma 8.3]{LotayPacini}. 

\begin{lem} \label{equivalent J min Lag}
Let $f: N \hookrightarrow X$ be an affine Lagrangian immersion 
of an oriented connected real $n$-dimensional manifold $N$. 
The following are equivalent.
\begin{enumerate}[(a)]
\item $f^{*} {\rm Re}(e^{i \theta} \Omega) = {\rm vol}_{J}[f]$ for some $\theta \in \mathbb{R}$;
\item $f^{*} {\rm Im}(e^{i \theta} \Omega) = 0$ for some $\theta \in \mathbb{R}$;
\item the affine Lagrangian angle $\theta_{N}$ is constant;
\item $f: N \hookrightarrow X$ is a critical point for the $J$-volume.
\end{enumerate}
\end{lem}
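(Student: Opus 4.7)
The plan is to first establish the algebraic equivalences (a) $\Leftrightarrow$ (b) $\Leftrightarrow$ (c) directly from the definition of the affine Lagrangian angle, and then to handle the variational equivalence (c) $\Leftrightarrow$ (d) by combining a calibration argument with the first variation formula for the $J$-volume.

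For (a) $\Leftrightarrow$ (b) $\Leftrightarrow$ (c), by definition $f^{*}\Omega = e^{i\theta_N}\,{\rm vol}_J[f]$, so
\begin{align*}
f^{*}{\rm Re}(e^{i\theta}\Omega) &= \cos(\theta_N + \theta)\,{\rm vol}_J[f],\\
f^{*}{\rm Im}(e^{i\theta}\Omega) &= \sin(\theta_N + \theta)\,{\rm vol}_J[f].
\end{align*}
Because $f$ is affine Lagrangian, it is in particular not partially complex, so by Lemma \ref{equality rho J} the density $\rho_J[f]$ is strictly positive and ${\rm vol}_J[f]$ is nowhere zero. Hence (a) is equivalent to $\cos(\theta_N + \theta) \equiv 1$ and (b) is equivalent to $\sin(\theta_N + \theta) \equiv 0$; in both cases $\theta_N + \theta$ takes values in a discrete set, and connectedness of $N$ forces it to be constant, yielding (c). Conversely, if $\theta_N \equiv \theta_0$, choosing $\theta = -\theta_0$ verifies both (a) and (b).

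For (c) $\Rightarrow$ (d), suppose $\theta_N \equiv \theta_0$. Then $f^{*}{\rm Re}(e^{-i\theta_0}\Omega) = {\rm vol}_J[f]$, saturating the calibration inequality of Lemma \ref{J calibration} applied with $\Omega$ replaced by $e^{-i\theta_0}\Omega$ (still a Calabi--Yau form). For any smooth one-parameter family $\{f_t\}$ with $f_0 = f$, openness of the affine Lagrangian condition ensures each $f_t$ is affine Lagrangian for small $t$, so
\begin{align*}
\int_N f_t^{*}{\rm Re}(e^{-i\theta_0}\Omega) \le \int_N {\rm vol}_J[f_t].
\end{align*}
Since ${\rm Re}(e^{-i\theta_0}\Omega)$ is closed and $N$ is compact (or the variation is compactly supported), Stokes' theorem shows the left-hand side is $t$-independent and equals ${\rm Vol}_J[f]$. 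Hence ${\rm Vol}_J[f_t] \ge {\rm Vol}_J[f]$ for all small $t$, so $f$ is a local minimizer of ${\rm Vol}_J$ and in particular a critical point.

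For (d) $\Rightarrow$ (c), I would invoke the first variation formula for the $J$-volume, the affine Lagrangian analogue of Proposition \ref{1st var phi Vol}: for a variation with $\partial_t f_t|_{t=0} = JY$,
\begin{align*}
\left.\frac{d}{dt}{\rm Vol}_J[f_t]\right|_{t=0} = -\int_N h(Y, H_J)\,{\rm vol}_J[f],
\end{align*}
where $H_J$ denotes the $J$-mean curvature (the affine Lagrangian analogue of $H_\phi$). Critical points are therefore characterised by $H_J = 0$. The remaining step is the identity relating $H_J$ to $\theta_N$, namely that the tangential part of $J\nabla\theta_N$ (gradient with respect to $f^{*}h$) agrees with $H_J$ up to a known scalar factor; granted this, $H_J = 0$ is equivalent to $d\theta_N = 0$, hence by connectedness to $\theta_N$ constant. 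The main obstacle is this last identity in the affine Lagrangian regime: the classical Lagrangian proof relies on the orthogonality of the splitting $TX = TN \oplus JTN$, which fails here, so one must redo the computation using the oblique decomposition and the transposed projections analogous to $\pi_L^t$ appearing in Definition \ref{def of H phi}. This is the key calculational content, but once the formula is obtained the rest of the proof is formal.
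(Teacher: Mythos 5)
Your proposal is correct and follows essentially the same route as the paper: the paper also reduces (c)$\Leftrightarrow$(d) to exactly the two facts you isolate, namely that $f$ is critical for the $J$-volume iff $H_{J}=0$ and that $H_{J}=J(d\theta_{N})^{\sharp}$, except that it simply cites these as \cite[Proposition 5.2]{LotayPacini} and \cite[Corollary 7.4]{LotayPacini} rather than redoing the computation in the oblique splitting, so the step you flag as the main obstacle is already available in the literature. Your calibration argument for (c)$\Rightarrow$(d) is a valid (indeed slightly stronger, since it yields local minimality) alternative for that one direction, but it becomes redundant once the gradient identity is in hand.
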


\begin{proof}
Define $H_{J} \in C^{\infty}(N, J f_{*} TN)$ by 
\begin{align} \label{def of H_J}
H_{J} = -J ((J {\rm tr}_{N} (\pi_{J}^{t} \nabla^{X} \pi_{N}^{t}))^{\top}), 
\end{align}
where $\nabla^{X}$ is the Levi-Civita connection of $h$, 
$\top: \iota^{*} TX \rightarrow \iota_{*} TN$ is the tangential projection defined by $h$, 
and 
$\pi_{N}^{t}$ and $\pi_{J}^{t}$
are transposed operators of the canonical projections 
of $\pi_{N}: \iota^{*} TX \rightarrow \iota_{*} TN $ and 
$\pi_{J}: \iota^{*} TX \rightarrow J \iota_{*} TN$
via the decomposition $\iota^{*} TX = \iota_{*} TN \oplus J \iota_{*} TN$
, respectively. 

By \cite[Proposition 5.2]{LotayPacini}, 
$f: N \hookrightarrow X$ is a critical point for the $J$-volume if and only if 
$H_{J} = 0$.
By \cite[Corollary 7.4]{LotayPacini}, we have 
\begin{align} \label{H_J angle}
H_{J} = J (d \theta_{N})^{\sharp},
\end{align}
where $\sharp$ is the metric dual with respect to $f^{*} h$.
Then by (\ref{norm Omega rho J}) and (\ref{H_J angle}), 
we see the equivalence.
\end{proof}

\subsection{$\phi$-volume in Sasaki-Einstein manifolds} \label{phi-vol in SE}

The odd dimensional analogue of a Calabi-Yau manifold
is a Sasaki-Einstein manifold. 
The following is a well-known fact. 
For example, see \cite[Lemma 11.1.5]{BoyerGalicki}.

\begin{lem}
Let $(M, g, \eta, \xi, \phi)$ be a $(2n+1)$-dimensional Sasakian manifold. 
If $g$ is Einstein, a cone $(C(M), \bar{g})$ is Ricci-flat. 
\end{lem}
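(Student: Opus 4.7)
My plan is to exploit the warped product structure of the cone together with the Sasakian curvature identities already listed in Lemma \ref{Sasaki eq2}. Concretely, $(C(M),\bar g)$ is the warped product $\mathbb{R}_{>0}\times_r M$ with warping function $f(r)=r$, and there is a standard formula expressing the Ricci curvature of such a warped product in terms of $\mathrm{Ric}_g$ and the derivatives of $f$. For the base dimension $2n+1$ and $f(r)=r$ the formula gives $\overline{\mathrm{Ric}}(\partial_r,\partial_r)=0$, $\overline{\mathrm{Ric}}(\partial_r,X)=0$ for $X$ horizontal, and
\[
\overline{\mathrm{Ric}}(X,Y)=\mathrm{Ric}_g(X,Y)-2n\,g(X,Y)
\]
for $X,Y\in TM$. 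Thus the cone is Ricci-flat if and only if $\mathrm{Ric}_g=2n\,g$.

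Next I would pin down the Einstein constant. Lemma \ref{Sasaki eq2} shows that in any Sasakian manifold $\mathrm{Ric}(\xi,\xi)=2n$ and $\mathrm{Ric}(\xi,X)=0$ for $X\in\ker\eta$. If $g$ is Einstein, write $\mathrm{Ric}=c\,g$; evaluating on $\xi$ using $g(\xi,\xi)=\eta(\xi)=1$ (Lemma \ref{Sasaki eq1}) forces $c=2n$, which is exactly the value noted immediately after Lemma \ref{Sasaki eq2}. Substituting $\mathrm{Ric}_g=2n\,g$ into the warped product formula then gives $\overline{\mathrm{Ric}}\equiv 0$.

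The only real work is justifying the warped product Ricci formula for the cone, so the main technical step is either to quote the standard warped product computation from Riemannian geometry or to reproduce it briefly. One way to streamline this is to use the cone relations $\bar\nabla_{\partial_r}\partial_r=0$, $\bar\nabla_{\partial_r}X=\bar\nabla_X\partial_r=\tfrac{1}{r}X$, and $\bar\nabla_XY=\nabla_XY-r\,g(X,Y)\partial_r$ for horizontal $X,Y$, from which the curvature tensor and then the Ricci tensor of $\bar g$ can be assembled directly. I expect no conceptual obstacle, just the bookkeeping of the warped product calculation; alternatively, the result is classical and can simply be cited, as the authors do in the final sentence.
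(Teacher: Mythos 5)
Your argument is correct: the paper gives no proof of this lemma, simply citing \cite[Lemma 11.1.5]{BoyerGalicki}, and the warped-product Ricci computation you outline (warping function $f(r)=r$, fibre $M$ of dimension $2n+1$, yielding $\overline{\mathrm{Ric}}(\partial_r,\cdot)=0$ and $\overline{\mathrm{Ric}}(X,Y)=\mathrm{Ric}_g(X,Y)-2n\,g(X,Y)$ for $X,Y\in TM$) is exactly the standard argument behind that citation; the only quibble is terminological, since $M$ is the fibre rather than the base of $\mathbb{R}_{>0}\times_r M$ and its tangent vectors are usually called vertical. Your identification of the Einstein constant $c=2n$ via $\mathrm{Ric}(\xi,\xi)=2n$ and $g(\xi,\xi)=\eta(\xi)=1$ agrees with the remark following Lemma \ref{Sasaki eq2}, so the proof closes up correctly.
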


Thus the canonical bundle of $C(M)$ is diffeomorphically trivial. 
In addition, 
suppose that 
the cone $C(M)$ is a Calabi-Yau manifold, 
namely, 
there exists 
a nowhere vanishing holomorphic $(n+1, 0)$-form $\Omega$ on $C(M)$ such that 
\begin{align} \label{CYcondition cone}
\bar{\omega}^{n+1}/(n+1)! = (-1)^{n(n+1)/2} (i/2)^{n+1} \Omega \wedge \overline{\Omega}, 
\end{align}
where $\bar{\omega} = \bar{g}(J \cdot, \cdot)$ is the associated K\"ahler form on $C(M)$. 
Then 
the canonical bundle of $C(M)$ is holomorphically trivial.

\begin{lem}[{\cite[Corollary 11.1.8]{BoyerGalicki}}]
If $M$ is a compact simply-connected Sasaki-Einstein manifold, 
$C(M)$ is a Calabi-Yau manifold. 
\end{lem}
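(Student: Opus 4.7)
The plan is to produce the nowhere-vanishing holomorphic $(n+1,0)$-form $\Omega$ on $C(M)$ as a parallel section of the canonical bundle $K_{C(M)}$, exploiting the Ricci-flat K\"ahler structure together with simple connectedness.

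First I would record that since $(M,g)$ is Sasaki-Einstein, the preceding lemma tells us that the cone $(C(M),\bar g)$ is Ricci-flat K\"ahler. Equip $K_{C(M)}$ with the Hermitian metric induced by $\bar g$ and consider its Chern connection. A standard computation in K\"ahler geometry identifies the curvature of this Chern connection with (a constant multiple of) the Ricci form of $\bar g$; since $\bar g$ is Ricci-flat, the Chern connection on $K_{C(M)}$ is flat.

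Next I would use simple connectedness. The cone $C(M)=\mathbb{R}_{>0}\times M$ retracts onto $M$, so $\pi_{1}(C(M))=\pi_{1}(M)=0$. Therefore the holonomy representation of the flat Chern connection on $K_{C(M)}$ is trivial, and $K_{C(M)}$ admits a global parallel nowhere-vanishing section $\Omega$. Because the Chern connection is compatible with both the holomorphic and Hermitian structures, a parallel section is in particular holomorphic, so $\Omega$ is a nowhere-vanishing holomorphic $(n+1,0)$-form on $C(M)$.

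It remains to arrange the Calabi-Yau normalization (\ref{CYcondition cone}). Both sides of (\ref{CYcondition cone}) are top-degree real forms on $C(M)$: $\bar\omega^{n+1}/(n+1)!$ is parallel because the K\"ahler form is parallel, and $(-1)^{n(n+1)/2}(i/2)^{n+1}\Omega\wedge\overline{\Omega}$ is parallel because $\Omega$ is parallel. Their ratio is therefore a (positive) constant on the connected manifold $C(M)$, and rescaling $\Omega$ by a suitable complex constant of modulus equal to the appropriate power of this ratio makes the two sides agree. This supplies the required Calabi-Yau data $(C(M),\bar g,J,\bar\omega,\Omega)$.

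The only delicate point is the step from flatness of the Chern connection to the existence of a global holomorphic trivialization; this is where simple connectedness of $M$ is used in an essential way, since on a general Ricci-flat K\"ahler cone one would only obtain $c_{1}(K_{C(M)})=0$ in de Rham cohomology, which is weaker than holomorphic triviality. I expect that to be the main (but entirely standard) obstacle.
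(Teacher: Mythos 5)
Your argument is correct. Note, however, that the paper does not prove this lemma at all: it is quoted verbatim as \cite[Corollary 11.1.8]{BoyerGalicki}, so there is no in-paper proof to compare against. What you have written is essentially the standard argument behind that citation: Ricci-flatness of $\bar g$ (from the preceding lemma) makes the Chern connection on $K_{C(M)}$ flat, the homotopy equivalence $C(M)\simeq M$ together with $\pi_{1}(M)=0$ kills the holonomy of that flat connection, a parallel section is holomorphic because the $(0,1)$-part of the Chern connection is $\bar\partial$, and the normalization (\ref{CYcondition cone}) is achieved by rescaling since both sides are parallel top-degree forms of the same sign. Boyer--Galicki phrase the same mechanism in terms of reducing the full (not just restricted) Riemannian holonomy of the cone to $SU(n+1)$, which is equivalent to the triviality of the determinant character you establish; your version via the canonical bundle is, if anything, slightly more direct. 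Two minor remarks: your construction nowhere uses compactness of $M$ (it enters the cited statement only through the ambient setting in \cite{BoyerGalicki}, e.g.\ via Myers' theorem forcing compactness and finite fundamental group for a positive Einstein constant), and it is worth noting explicitly that the parallel section is nowhere vanishing because its pointwise norm is constant.
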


\begin{rem}
The holomorphic volume form $\Omega$ is not unique. 
For any $\theta \in \mathbb{R}$, 
$e^{i \theta} \Omega$ also satisfies (\ref{CYcondition cone}). 
\end{rem}

In Section \ref{phi-vol in SE}, we suppose that 
$M$ is a $(2n+1)$-dimensional Sasaki-Einstein manifold with 
a Calabi-Yau structure $(\bar{g}, J, \bar{\omega}, \Omega)$ on $C(M)$.

Define a complex valued $n$-form on $M$ by 
\begin{align*}
\psi = u^{*} \left( i \left(r \frac{\partial}{\partial r} \right) \Omega \right),
\end{align*}
where $u: M =\{ 1 \} \times M \hookrightarrow C(M)$ is an inclusion. 
Note that we can recover $\Omega$ from $\psi$ via
\begin{align} \label{Omega psi}
\Omega = (dr - i \eta) \wedge r^{n} \psi.
\end{align}

\subsubsection{Special Legendrian geometry}

Define the {\bf Legendrian angle} $\theta_{L}: L \rightarrow \mathbb{R}/ 2 \pi \mathbb{Z}$ 
of a Legendrian immersion  $\iota: L \hookrightarrow M$ by
\begin{align*}
\iota^{*} \psi = e^{i \theta_{L}} {\rm vol}_{\iota^{*} g}. 
\end{align*}
Note that 
the Legendrian angle $\theta_{L}$ of a Legendrian immersion  $\iota: L \hookrightarrow M$  
agrees with the Lagrangian angle 
of the induced Lagrangian immersion $\bar{\iota}:  C(L) \hookrightarrow C(M)$ given by (\ref{induced imm}).

\begin{definition}
Let $L$ be an oriented $n$-dimensional manifold 
admitting an immersion $\iota: L \hookrightarrow M$. 
An immersion $\iota: L \hookrightarrow M$ is called {\bf special Legendrian} if 
$\iota^{*} {\rm Re} \psi = {\rm vol}_{\iota^{*}g}$.
This is equivalent to the condition that
the induced immersion $\bar{\iota}:  C(L) \hookrightarrow C(M)$ given by (\ref{induced imm}) 
is special Lagrangian. 
\end{definition}

We have an analogue of Lemmas \ref{equivalent min Lag} and \ref{equivalent J min Lag}. 
This is a direct consequence of Lemma \ref{equivalent min Lag}. 
Note that $\iota$ is minimal if and only if $\bar{\iota}$ is minimal. 

\begin{lem} \label{equivalent min Leg}
Let $\iota: L \hookrightarrow M$ be an immersion 
of an oriented connected $n$-dimensional manifold $L$. 
The following are equivalent.
\begin{enumerate}[(a)]
\item $\iota^{*} {\rm Re}(e^{i \theta} \psi) = {\rm vol}_{\iota^{*}g}$ for some $\theta \in \mathbb{R}$;
\item $\iota^{*} \eta = 0$ and $\iota^{*} {\rm Im}(e^{i \theta} \psi) = 0$ for some $\theta \in \mathbb{R}$;
\item $\iota^{*} \eta = 0$ and the Legendrian angle $\theta_{L}$ is constant;
\item $\iota: L \hookrightarrow M$ is minimal Legendrian. 
\end{enumerate}
\end{lem}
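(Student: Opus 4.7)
The plan is to reduce every implication to the corresponding statement for the induced Lagrangian immersion $\bar{\iota}\colon C(L)\hookrightarrow C(M)$ and then invoke Lemma \ref{equivalent min Lag}. The underlying principle is that the Legendrian/special Legendrian/minimality data for $\iota$ in $M$ translate into the Lagrangian/special Lagrangian/minimality data for $\bar{\iota}$ in the Calabi--Yau cone, once the $n$-form $\psi$ on $M$ is related to the holomorphic $(n+1,0)$-form $\Omega$ on $C(M)$ via the identity $\Omega=(dr-i\eta)\wedge r^n\psi$.

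Before cycling through the implications I would record two pointwise identities that do all of the work. First, if $\iota^{*}\eta=0$ (so $\bar{\iota}$ is Lagrangian) then on $C(L)=\mathbb{R}_{>0}\times L$ we have
\begin{align*}
\bar{\iota}^{*}\Omega = dr\wedge r^{n}\iota^{*}\psi,
\qquad
{\rm vol}_{\bar{\iota}^{*}\bar{g}} = r^{n}\,dr\wedge{\rm vol}_{\iota^{*}g},
\end{align*}
so that the Lagrangian angle $\theta_{\bar{L}}$ of $\bar{\iota}$ (pulled up to the cone) coincides with the Legendrian angle $\theta_{L}$ of $\iota$, and $\bar{\iota}^{*}{\rm Re}(e^{i\theta}\Omega)={\rm vol}_{\bar{\iota}^{*}\bar{g}}$ is equivalent to $\iota^{*}{\rm Re}(e^{i\theta}\psi)={\rm vol}_{\iota^{*}g}$ (and similarly for the imaginary part). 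Second, the pointwise estimate $|\iota^{*}\psi|\leq{\rm vol}_{\iota^{*}g}$, with equality precisely when $\iota$ is Legendrian, follows from the cone-counterpart of Lemma \ref{J calibration} applied to $\bar{\iota}$; this is the odd-dimensional analogue of the comparison used in the Lagrangian proof.

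With these identities, the implications go as follows. For (a)$\Rightarrow$(b), the bound $|\iota^{*}\psi|\leq{\rm vol}_{\iota^{*}g}$ forces $\iota^{*}{\rm Re}(e^{i\theta}\psi)\leq{\rm vol}_{\iota^{*}g}$ with equality only if $\iota$ is Legendrian and the imaginary part vanishes, which is exactly (b). For (b)$\Rightarrow$(c), once $\iota$ is Legendrian we have $|\iota^{*}\psi|={\rm vol}_{\iota^{*}g}$, so $e^{i\theta_{L}}$ is well-defined, and $\iota^{*}{\rm Im}(e^{i\theta}\psi)=0$ makes $e^{i(\theta+\theta_{L})}$ real-valued; by connectedness of $L$ the angle $\theta_{L}$ must be constant. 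For (c)$\Rightarrow$(d), translating via the first identity shows that $\bar{\iota}$ is Lagrangian with constant Lagrangian angle, so by Lemma \ref{equivalent min Lag} $\bar{\iota}$ is minimal Lagrangian, and the standard fact that minimality of a submanifold of the cone is equivalent to minimality of its link then upgrades this to $\iota$ being minimal Legendrian. For (d)$\Rightarrow$(a), the same cone correspondence applied in reverse gives a $\theta\in\mathbb{R}$ such that $\bar{\iota}^{*}{\rm Re}(e^{i\theta}\Omega)={\rm vol}_{\bar{\iota}^{*}\bar{g}}$, and factoring out $dr$ using the first identity yields (a).

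The only non-trivial obstacle is the pointwise bound $|\iota^{*}\psi|\leq{\rm vol}_{\iota^{*}g}$ with sharpness characterizing Legendrian immersions, needed to launch the implication (a)$\Rightarrow$(b); everything else is bookkeeping built on Lemma \ref{equivalent min Lag} and the cone correspondence. This bound is a direct consequence of Lemma \ref{J calibration} for $\bar{\iota}$ combined with the normalization ${\rm vol}_{\bar{\iota}^{*}\bar{g}}=r^{n}dr\wedge{\rm vol}_{\iota^{*}g}$, so the lemma really is a corollary of the already-established Lagrangian results once the cone dictionary has been set up.
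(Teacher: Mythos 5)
Your proposal is correct and follows exactly the route the paper intends: the paper gives no written proof beyond declaring the lemma ``a direct consequence of Lemma \ref{equivalent min Lag}'' together with the remark that $\iota$ is minimal if and only if $\bar{\iota}$ is minimal, and your argument simply fills in the details of that cone reduction via $\Omega=(dr-i\eta)\wedge r^{n}\psi$ and the calibration bound of Harvey--Lawson. No discrepancies.
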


\subsubsection{Special affine Legendrian geometry}

From Lemma \ref{J calibration}, we immediately see the following. 

\begin{lem} \label{phi calibration}
Let $\iota: L \hookrightarrow M$ be an affine Legendrian immersion 
of an oriented $n$-dimensional manifold $L$. 
Then we have  
\begin{align*}
\iota^{*} {\rm Re}\psi \leq {\rm vol}_{\phi}[\iota] \leq {\rm vol}_{\iota^{*}g}.
\end{align*}
The equality holds 
\begin{itemize}
\item
in the first relation if and only if $\iota^{*} {\rm Im}\psi =0$ and $\iota^{*} {\rm Re}\psi > 0$, 
\item
and in the second relation if and only if $\iota$ is Legendrian.
\end{itemize}
\end{lem}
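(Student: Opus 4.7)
The plan is to apply Lemma \ref{J calibration} to the induced affine Lagrangian immersion $\bar{\iota}: C(L) \hookrightarrow C(M)$ (which is affine Lagrangian by the remark following Definition \ref{def of aff Leg}) and descend the resulting inequalities from $(n+1)$-forms on $C(L)$ to $n$-forms on $L$. The key identity to invoke is the decomposition $\Omega = (dr - i\eta) \wedge r^{n} \psi$ from (\ref{Omega psi}).

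First I would pull back $\Omega$ by $\bar{\iota}$ and separate into real and imaginary parts, obtaining
\begin{align*}
\bar{\iota}^{*} {\rm Re}\,\Omega &= dr \wedge r^{n} \iota^{*} {\rm Re}\,\psi + \iota^{*}\eta \wedge r^{n} \iota^{*}{\rm Im}\,\psi, \\
\bar{\iota}^{*} {\rm Im}\,\Omega &= dr \wedge r^{n} \iota^{*} {\rm Im}\,\psi - \iota^{*}\eta \wedge r^{n} \iota^{*}{\rm Re}\,\psi.
\end{align*}
The two terms $\iota^{*}\eta \wedge r^{n} \iota^{*}{\rm Im}\,\psi$ and $\iota^{*}\eta \wedge r^{n} \iota^{*}{\rm Re}\,\psi$ vanish identically, since each is (pulled back from) an $(n+1)$-form on the $n$-dimensional manifold $L$. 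Thus $\bar{\iota}^{*} {\rm Re}\,\Omega = r^{n} dr \wedge \iota^{*} {\rm Re}\,\psi$ and $\bar{\iota}^{*} {\rm Im}\,\Omega = r^{n} dr \wedge \iota^{*} {\rm Im}\,\psi$.

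Next I would combine this with ${\rm vol}_{J}[\bar{\iota}] = r^{n} dr \wedge {\rm vol}_{\phi}[\iota]$ from Definition \ref{def of phi vol} and the standard identity ${\rm vol}_{\bar{\iota}^{*} \bar{g}} = r^{n} dr \wedge {\rm vol}_{\iota^{*} g}$ for Riemannian cones. Lemma \ref{J calibration} applied to $\bar{\iota}$ then reads
\begin{align*}
r^{n} dr \wedge \iota^{*} {\rm Re}\,\psi \;\leq\; r^{n} dr \wedge {\rm vol}_{\phi}[\iota] \;\leq\; r^{n} dr \wedge {\rm vol}_{\iota^{*} g},
\end{align*}
and cancelling the common nonvanishing factor $r^{n} dr$ on each side yields the claimed chain of inequalities on $L$.

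The equality assertions transfer in the same way. The first equality in Lemma \ref{J calibration} holds iff $\bar{\iota}^{*} {\rm Im}\,\Omega = 0$ and $\bar{\iota}^{*} {\rm Re}\,\Omega > 0$, which by the identities above are equivalent to $\iota^{*} {\rm Im}\,\psi = 0$ and $\iota^{*} {\rm Re}\,\psi > 0$. The second equality holds iff $\bar{\iota}$ is Lagrangian, which is equivalent to $\iota$ being Legendrian by definition. There is no real obstacle here; the proof is essentially bookkeeping, and the only point that deserves emphasis is the degree-counting argument that kills the $\iota^{*}\eta$ cross-terms in the real and imaginary parts of $\bar{\iota}^{*}\Omega$.
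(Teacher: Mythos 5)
Your proposal is correct and follows exactly the route the paper intends: the paper states this lemma as an immediate consequence of Lemma \ref{J calibration} applied to the induced cone immersion $\bar{\iota}$, and your computation via $\Omega = (dr - i\eta)\wedge r^{n}\psi$, the degree-counting that kills the $\iota^{*}\eta$ cross-terms, and the factorization of all three $(n+1)$-forms as $r^{n}dr\wedge(\cdot)$ is precisely the bookkeeping the paper leaves implicit.
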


Define the {\bf affine Legendrian angle} $\theta_{L}: L \rightarrow \mathbb{R}/ 2 \pi \mathbb{Z}$ 
of an affine Legendrian immersion  $\iota: L \hookrightarrow M$ by
\begin{align*}
\iota^{*} \psi = e^{i \theta_{L}} {\rm vol}_{\phi}[\iota]. 
\end{align*}
This is well-defined by (\ref{norm Omega rho J}).
Note that 
the affine Legendrian angle $\theta_{L}$ of an affine Legendrian immersion $\iota: L \hookrightarrow M$  
agrees with the affine Lagrangian angle 
of the induced affine Lagrangian immersion 
$\bar{\iota}:  C(L) \hookrightarrow C(M)$ given by (\ref{induced imm}).

\begin{definition} \label{def of special aff Leg}
Let $L$ be an oriented $n$-dimensional manifold 
admitting an immersion $\iota: L \hookrightarrow M$. 
An immersion $\iota: L \hookrightarrow M$ is called {\bf special  affine Legendrian} if 
$\iota^{*} {\rm Re} \psi = {\rm vol}_{\phi}[\iota]$.
This is equivalent to the condition that
the induced immersion $\bar{\iota}:  C(L) \hookrightarrow C(M)$ given by (\ref{induced imm}) 
is special affine Lagrangian. 
\end{definition}

It is natural to expect an analogue of Lemmas \ref{equivalent min Lag}, 
\ref{equivalent J min Lag} and \ref{equivalent min Leg}. 
By Lemma \ref{phi calibration}, 
we immediately see that the following three conditions are equivalent. 
\begin{itemize}
\item $\iota^{*} {\rm Re}(e^{i \theta} \psi) = {\rm vol}_{\phi}[\iota]$ for some $\theta \in \mathbb{R}$;
\item $\iota^{*} {\rm Im}(e^{i \theta} \psi) = 0$ for some $\theta \in \mathbb{R}$;
\item the affine Legendrian angle $\theta_{L}$ is constant.
\end{itemize}
However, 
in the affine Legendrian setting, 
each of these conditions is not equivalent to saying that 
$\iota$ is a critical point for the $\phi$-volume. 
In fact, we have the following.

\begin{prop} \label{relation angle H phi}
Let $\iota: L \hookrightarrow M$ be an affine Legendrian immersion 
of an oriented connected $n$-dimensional manifold $L$. 
We have 
\begin{align*}
(d \theta_{L})^{\sharp} = - (n+1) \xi^{\top} + H_{\phi}, 
\end{align*}
where $\sharp$ is the metric dual with respect to $\iota^{*} g$ on $L$,
$\top: \iota^{*} TM \rightarrow TL$ is the tangential projection defined by 
the orthogonal decomposition of $\iota^{*} TM$ by the metric $g$
and $H_{\phi}$ is given in Definition \ref{def of H phi}. 
\end{prop}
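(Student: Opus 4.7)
My strategy is to transport the problem to the Kähler cone $C(M)$, where $\bar\iota\colon C(L)\hookrightarrow C(M)$ is affine Lagrangian, and then invoke the formula $H_J^{\bar\iota} = J(d\theta_{\bar L})^{\sharp_{\bar g}}$ of Lotay--Pacini (equation \eqref{H_J angle}) that appears in the proof of Lemma~\ref{equivalent J min Lag}.

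The first preliminary step is to match the two angle functions. Pulling $\Omega = (dr - i\eta)\wedge r^n\psi$ back by $\bar\iota$, the term $\iota^*\eta\wedge{\rm vol}_\phi[\iota]$ vanishes for degree reasons (it is an $(n+1)$-form on the $n$-manifold $L$), and combining with ${\rm vol}_J[\bar\iota] = r^n\,dr\wedge{\rm vol}_\phi[\iota]$ from Definition~\ref{def of phi vol}, one obtains $\bar\iota^*\Omega = e^{i\theta_L}{\rm vol}_J[\bar\iota]$, hence $\theta_{\bar L}(r,x) = \theta_L(x)$.

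Next, I evaluate the identity $H_J^{\bar\iota} = J(d\theta_{\bar L})^{\sharp_{\bar g}}$ at $(1,x)$. For the right-hand side, the cone metric restricts to $dr^2\oplus \iota^*g$ on $T_{(1,x)}C(L)$, and $\theta_{\bar L}$ is constant along the $r$-direction, so the cone gradient reduces to $(d\theta_L)^\sharp\in T_xL$; using the cone complex structure, which at $r=1$ sends $v\in T_{\iota(x)}M$ to $\phi v + \eta(v)\partial_r$, the right-hand side becomes $\phi(d\theta_L)^\sharp + \eta((d\theta_L)^\sharp)\partial_r$. For the left-hand side, I take an orthonormal frame $\{\partial_r, e_1,\ldots, e_n\}$ of $T_{(1,x)}C(L)$, expand the trace that defines $H_J^{\bar\iota}$ using the O'Neill-type identities $\bar\nabla_{\partial_r}X = X/r = \bar\nabla_X\partial_r$ and $\bar\nabla_X Y = \nabla_X Y - r g(X,Y)\partial_r$ for $X,Y\in TM$, and translate the cone's transposed projections $\pi_N^t, \pi_J^t$ (associated with $TC(M) = TC(L)\oplus JTC(L)$) into the Sasaki transposed projections $\pi_L^t, \pi_\phi^t, \pi_\xi^t$ along $TM = TL\oplus \phi TL\oplus \mathbb{R}\xi$. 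The $n$ horizontal contributions should assemble into the trace ${\rm tr}_L(\pi_\phi^t\nabla\pi_L^t)$ appearing in $H_\phi$, with additional $\xi$-type terms; the single radial ($\alpha=0$) contribution should produce one further $\xi$-type term. After applying $-J$ and the tangential projection, the left-hand side assumes the form $\phi\bigl(H_\phi - (n+1)\xi^\top\bigr) + (\text{radial part})$.

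Equating the $T_{\iota(x)}M$-components of both sides and rearranging via $\phi^2 = -\mathrm{id}+\eta\otimes\xi$ yields the desired formula $(d\theta_L)^\sharp = -(n+1)\xi^\top + H_\phi$. The main obstacle is tracking the precise coefficient $n+1$ through the trace, which requires careful bookkeeping of the $\xi$-contributions coming from three sources: the $n$ occurrences of $\bar\nabla_{e_i}\partial_r = e_i$ in the horizontal terms, the $-rg(X,Y)\partial_r$ correction in the cone Levi-Civita connection (which interacts with $J\partial_r = -\xi$), and the mismatch between the cone projections $\pi_N^t, \pi_J^t$ and the Sasaki projections $\pi_L^t, \pi_\phi^t, \pi_\xi^t$ on the $\partial_r$-component. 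Verifying that these distinct sources recombine into exactly $-(n+1)\xi^\top$, where the $+1$ arises from the radial index while the $n$ arises from the horizontal indices, is the computational heart of the proof.
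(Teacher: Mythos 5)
Your overall route is the same as the paper's: pass to the cone, identify $\theta_{C(L)}$ with $\theta_{L}$ (your degree argument for discarding the $\iota^{*}\eta \wedge {\rm vol}_{\phi}[\iota]$ term in $\bar{\iota}^{*}\Omega$ is correct), and extract the statement from the Lotay--Pacini identity $H_{J} = J (d\theta)^{\sharp}$ of (\ref{H_J angle}) together with a computation of $H_{J}$ at $r=1$. Your final rearrangement via $\phi^{2} = -{\rm id} + \eta \otimes \xi$ is also sound, since both sides of the resulting identity lie in $\iota_{*}TL$ and $\iota_{*}TL \cap \mathbb{R}\xi = 0$ by the affine Legendrian condition. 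The gap is that the one step carrying all the content --- showing that the trace defining $H_{J}$ restricts at $r=1$ to give $(JH_{J})|_{r=1} = (n+1)\xi^{\top} - H_{\phi}$ --- is left as a plan, and the plan's accounting of the coefficient does not match how the computation actually closes. You predict that the $+1$ in $n+1$ arises from the radial index of the trace; in the paper's Lemma \ref{H_J H_phi} the radial term contributes nothing (the term $\bar{g}(\pi_{C(L)}(\bar{\nabla}_{\partial/\partial r}(JY)), \partial/\partial r)$ is shown to vanish). Instead the $n$ comes from the correction terms $(\nabla_{e_{i}}\phi)(Y) = g(e_{i},Y)\xi - \eta(Y)e_{i}$ in the horizontal sum, and the $+1$ comes from the $\eta(Y)$ in Corollary \ref{1st var cor}, i.e.\ from the $\xi^{\top}$ summand already built into the definition of $H_{\phi}$. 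Until the bookkeeping is actually carried out and these sources are located correctly, the proposition is not proved.

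You could also save yourself most of the projection gymnastics you anticipate. Rather than expanding $\pi_{N}^{t}, \pi_{J}^{t}$ against a frame directly, the paper pairs $JH_{J}$ against an arbitrary $Y \in \mathfrak{X}(L)$, observes that the radial term drops, reduces the horizontal sum to $-\sum_{i} f^{i}(\nabla_{e_{i}}Y) = \sum_{i} e^{i}(\nabla_{e_{i}}(\phi Y)) + n\eta(Y)$, and then quotes the already-established first-variation identity $\sum_{i} e^{i}(\nabla_{e_{i}}(\phi Y)) = -g(Y, H_{\phi}) + \eta(Y)$ from Corollary \ref{1st var cor}. This turns the ``computational heart'' into a few lines and makes the origin of each piece of $(n+1)\xi^{\top} - H_{\phi}$ transparent; I recommend closing your argument that way rather than by the direct trace expansion.
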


Thus an analogue of Lemmas \ref{equivalent min Lag}, 
\ref{equivalent J min Lag} and \ref{equivalent min Leg} 
holds if 
an affine Legendrian immersion
$\iota: L \hookrightarrow M$ is Legendrian. 
It may be necessary to modify the notion of the $\phi$-volume 
to hold an analogue of Lemmas \ref{equivalent min Lag}, 
\ref{equivalent J min Lag} and \ref{equivalent min Leg} 
or 
to consider 
what the critical points of the $\phi$-volume which are not minimal Legendrian are.

\begin{lem} \label{H_J H_phi}
Let $H_{J} \in C^{\infty}(C(L), J \bar{\iota}_{*} TC(L))$ 
be defined by (\ref{def of H_J}). Then we have 
\begin{align*}
(J H_{J})|_{r=1} = (n+1) \xi^{\top} - H_{\phi}.
\end{align*}
\end{lem}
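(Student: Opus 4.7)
\emph{Proof proposal.} My plan is to avoid a direct comparison of the transposed projections by routing the computation through the angle function $\theta_L$. The two key inputs are the Lotay--Pacini identity (\ref{H_J angle}), namely $H_J = J(d\theta_{C(L)})^\sharp$ where the sharp is taken with respect to the induced metric $\bar\iota^*\bar g$, and Proposition \ref{relation angle H phi}, which says $(d\theta_L)^\sharp = -(n+1)\xi^\top + H_\phi$ on $L$ with respect to $\iota^*g$. If I can identify $(d\theta_{C(L)})^\sharp|_{r=1}$ with $(d\theta_L)^\sharp$, then using $J^2 = -\mathrm{id}$ gives
\[
JH_J|_{r=1} = -(d\theta_{C(L)})^\sharp|_{r=1} = -(d\theta_L)^\sharp = (n+1)\xi^\top - H_\phi,
\]
which is exactly the claim.

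First I would record that the affine Lagrangian angle of the induced affine Lagrangian immersion $\bar\iota:C(L)\hookrightarrow C(M)$ coincides with the affine Legendrian angle $\theta_L$ of $\iota$, as already noted after Definition \ref{def of special aff Leg}. To see this cleanly from (\ref{Omega psi}) and Definition \ref{def of phi vol}, I would pull back $\Omega = (dr - i\eta)\wedge r^n \psi$ by $\bar\iota$; the term involving $\bar\iota^*\eta \wedge \bar\iota^*\psi$ is an $(n+1)$-form on $C(L)$ with no $dr$ component, so it must vanish because $L$ is only $n$-dimensional, and the remaining term is exactly $e^{i\theta_L}\,\mathrm{vol}_J[\bar\iota]$. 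In particular $\theta_{C(L)}$ depends only on the $L$-coordinate; hence $d\theta_{C(L)}$ annihilates $\partial_r$ and agrees with $d\theta_L$ on $TL$.

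Next I would use the fact that the induced metric $\bar\iota^*\bar g$ at $r=1$ is the orthogonal direct sum $dr^2 \oplus \iota^*g$. Under this orthogonal decomposition a $1$-form with vanishing $\partial_r$ component has metric dual lying in $TL$, and the dualization with respect to $\bar\iota^*\bar g$ at $r=1$ reduces to dualization with respect to $\iota^*g$. Applying this to $d\theta_{C(L)}$ yields $(d\theta_{C(L)})^\sharp|_{r=1} = (d\theta_L)^\sharp$, which combines with the angle formula and Proposition \ref{relation angle H phi} as above to finish the proof. There is no genuine obstacle here; the only point requiring care is the dimensional vanishing argument that forces $\theta_{C(L)}$ to be $r$-independent, since an alternative direct attempt to match $\pi_J^t\nabla^{C(M)}\pi_N^t$ with $\pi_\phi^t\nabla\pi_L^t$ on the nose would be notationally heavy and obscure the role of the correction term $(n+1)\xi^\top$.
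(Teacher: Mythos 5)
Your reduction is internally consistent, but as structured it is circular with respect to the paper's logical development. You take Proposition \ref{relation angle H phi} as one of your two key inputs; in the paper, however, Proposition \ref{relation angle H phi} is \emph{proved} by combining (\ref{H_J angle}) with Lemma \ref{H_J H_phi} --- the very statement you are asked to prove. Indeed, given (\ref{H_J angle}) and the identification $\theta_{C(L)}=\theta_L$ (your $r$-independence argument for this identification is fine, as is the observation that dualizing a $1$-form with no $dr$-component with respect to $dr^2\oplus\iota^*g$ at $r=1$ reduces to dualizing with respect to $\iota^*g$), the lemma and the proposition are \emph{equivalent} statements: $JH_J|_{r=1}=-(d\theta_{C(L)})^{\sharp}|_{r=1}=-(d\theta_L)^{\sharp}$. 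So deducing one from the other produces no new content; the computation that actually yields the right-hand side $(n+1)\xi^{\top}-H_{\phi}$ must be carried out somewhere, and your proposal never carries it out.

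The paper does this directly from the definition (\ref{def of H_J}): one expands
\begin{align*}
\bar{\iota}^{*}\bar{g}(Y, JH_J)=\sum_{i=1}^{n}\bar{\iota}^{*}\bar{g}\left(\pi_{C(L)}\left(\bar{\nabla}_{e_i/r}(JY)\right), \frac{e_i}{r}\right)
\end{align*}
(the radial term vanishes), relates the cone connection $\bar{\nabla}$ to $\nabla$ on $M$ to obtain $-\sum_i f^{i}(\nabla_{e_i}Y)=\sum_i e^{i}(\nabla_{e_i}(\phi Y))+n\eta(Y)$, and then applies Corollary \ref{1st var cor}. To rescue your route you would need an independent proof of Proposition \ref{relation angle H phi}, for instance the Maslov-form computation sketched in the remark following it ($\xi_{\phi}=-\sum_i f^{i}(\nabla e_i)$, $\xi_{\phi}^{\sharp}=(n+1)\xi^{\top}-H_{\phi}$, $\xi_{\phi}=-d\theta_L$); but that computation is essentially the same pointwise calculation as the paper's direct proof of the lemma, so nothing is saved. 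As written, the proposal has a genuine gap.
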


\begin{proof}
By (\ref{def of H_J}), 
we have for any vector field $Y$ on $C(L)$ 
\begin{align*}
\bar{\iota}^{*} \bar{g} (Y, J H_{J}) &= 
\sum_{i=1}^{n} \bar{\iota}^{*} \bar{g} 
\left( \pi_{C(L)} (\bar{\nabla}_{\frac{e_{i}}{r}} (JY)), \frac{e_{i}}{r} \right)
+ \bar{\iota}^{*} \bar{g} \left (\pi_{C(L)} (\bar{\nabla}_{\frac{\partial}{\partial r}} (JY)), 
\frac{\partial}{\partial r} \right) \\
&=
\sum_{i=1}^{n} \bar{\iota}^{*} \bar{g} 
\left( \pi_{C(L)} (\bar{\nabla}_{\frac{e_{i}}{r}} (JY)), \frac{e_{i}}{r} \right),
\end{align*}
where $\{ e_{1}, \cdots, e_{n} \}$ is a local orthonormal frame of $TL$ 
with respect to $\iota^{*}g$. 
Using the notation in Section \ref{first var}, 
we have for any vector field $Y$ on $L$
\begin{align*}
\bar{\nabla}_{e_{i}} (JY) &= 
J \left(\nabla_{e_{i}} Y - \frac{\bar{g}(e_{i}, Y)}{r^{2}} \cdot r \frac{\partial}{\partial r} \right),\\
\nabla_{e_{i}} Y &= 
\sum_{j=1}^{n} e^{j} (\nabla_{e_{i}} Y) e_{j} + \sum_{j=1}^{n} f^{j} (\nabla_{e_{i}} Y) \phi e_{j} 
+ \eta^{*} (\nabla_{e_{i}} Y) \xi \\
&=
\sum_{j=1}^{n} e^{j} (\nabla_{e_{i}} Y) e_{j} 
+ \sum_{j=1}^{n} f^{j} (\nabla_{e_{i}} Y) 
\left (J e_{j} - \eta (e_{j}) r \frac{\partial}{\partial r} \right)
+ \eta^{*} (\nabla_{e_{i}} Y) \xi.
\end{align*}
Hence we have at the point of $\{ r=1 \}$
\begin{align*}
\sum_{i=1}^{n} \bar{\iota}^{*} \bar{g} (\pi_{C(L)} (\bar{\nabla}_{e_{i}} (JY)), e_{i})
&=
- \sum_{i=1}^{n} f^{i} (\nabla_{e_{i}} Y)\\
&=
\sum_{i=1}^{n} e^{i} (\phi (\nabla_{e_{i}} Y))\\
&=
\sum_{i=1}^{n} e^{i} (\nabla_{e_{i}} (\phi Y) - (\nabla_{e_{i}} \phi) (Y))\\
&=
\sum_{i=1}^{n} e^{i} (\nabla_{e_{i}} (\phi Y)) + n \eta(Y). 
\end{align*}
Since we know that  
$\sum_{i=1}^{n} e^{i} (\nabla_{e_{i}} (\phi Y)) = - g (Y, H_{\phi}) + \eta (Y)$ 
by Corollary \ref{1st var cor}, 
the proof is done.
\end{proof}

\begin{proof}[Proof of Proposition \ref{relation angle H phi}]
By (\ref{H_J angle}), we have 
\begin{align*}
d \theta_{C(L)} = - \bar{\iota}^{*} \bar{g} (J H_{J}, \cdot), 
\end{align*}
where $\theta_{C(L)}$ is the affine Lagrangian angle of 
$\bar{\iota}:  C(L) \hookrightarrow C(M)$ given by (\ref{induced imm}). 
Since we know that 
$u^{*} \theta_{C(L)} = \theta_{L}$ for the inclusion $u: L \hookrightarrow C(L)$, 
Lemma \ref{H_J H_phi} implies the statement. 
\end{proof}

\begin{rem}
We can also prove Proposition \ref{relation angle H phi} by using the tensors on $L$. 
We give an outline of the proof. 
Define the $1$-form $\xi_{\phi}$ on $L$ by $u^{*} \xi_{J}$: 
the pullback of the Maslov form $\xi_{J}$ of $C(L)$ defined in \cite[Section 3.2]{LotayPacini}
by $u: L \hookrightarrow C(L)$. 
Define the complex valued $n$-form $\psi_{L}$ on $L$ by 
$\psi_{L} = u^{*} (i(\partial/ \partial r) \Omega_{C(L)})$, 
where $\Omega_{C(L)}$ is the canonical section 
of the canonical bundle of $C(L)$ 
defined in \cite[Section 3.2]{LotayPacini}.

Then as in \cite[Lemma 7.2]{LotayPacini}, we have $\psi = e^{i \theta_{L}} \psi_{L}$.
By a direct computation, we have 
\begin{align*}
\xi_{\phi} = - \sum_{i=1}^{n} f^{i} (\nabla e_{i}), \qquad 
\xi_{\phi}^{\sharp}= (n+1) \xi^{\top} - H_{\phi}. 
\end{align*}
Let $\bar{\nabla}$ and $\nabla$ be the Levi-Civita connections of 
$\bar{g}$ and $g$, respectively. 
By the equations $\bar{\nabla} \Omega = 0$ and (\ref{Omega psi}), we deduce that 
\begin{align*}
\nabla \psi = - i \eta \wedge \psi. 
\end{align*}
By the equations $\bar{\nabla} \Omega_{L} = i \xi_{J} \otimes \Omega_{C(L)}$ 
and  $\Omega_{C(L)} = (dr - i \eta) \wedge r^{n} \psi_{L}$, 
we deduce that 
\begin{align*}
\nabla \psi_{L} = i (-\eta \wedge \psi_{L} + \xi_{\phi} \otimes \psi_{L}).
\end{align*}
Then we obtain $\xi_{\phi} = - d \theta_{L}$ from $\psi = e^{i \theta_{L}} \psi_{L}$.
\end{rem}

\section{Moduli space of the special affine Legendrian submanifolds}

In this section, we prove Theorem \ref{smooth moduli affine Leg}. 
First, we study the moduli space of submanifolds 
characterized by differential forms 
following \cite{Moriyama} 
to obtain Proposition \ref{general moduli}. 
As a corollary of  Proposition \ref{general moduli}, 
we prove Theorem \ref{smooth moduli affine Leg}.

Let $(M, g)$ be a Riemannian manifold 
and $L$ be a compact connected manifold 
admitting an embedding into $M$. 
Denote by $C^{\infty}_{emb}(L, M)$ be 
the set of all embeddings from $L$ to $M$:
\begin{align*}
C^{\infty}_{emb}(L, M) = \{ \iota: L \hookrightarrow M; \iota \mbox{ is an embedding} \}.
\end{align*}
Set $\mathcal{M}(L, M) = C^{\infty}_{emb}(L, M)/ {\rm Diff}^{\infty}(L)$, 
where ${\rm Diff}^{\infty}(L)$ is a $C^{\infty}$ diffeomorphism group of $L$.

By \cite[Theorem 3.3]{Opozda},  
$\mathcal{M}(L, M)$ is a smooth Fr\'{e}chet manifold 
modeled on the Fr\'{e}chet vector space 
$C^{\infty}(L, \mathcal{N}_{\iota})$ for $\iota \in C^{\infty}_{emb}(L, M)$, 
where 
$\mathcal{N}_{\iota}$ is any vector bundle transversal to $\iota$
and 
$C^{\infty}(L, \mathcal{N}_{\iota})$ 
is the space of all sections of $\mathcal{N}_{\iota} \rightarrow L$.

Now we choose
a system 
$\Phi = (\varphi_{1}, \cdots, \varphi_{m}) \in \oplus_{i=1}^{m} \Omega^{k_{i}} (M)$
of smooth differential forms on $M$. 
These forms are not necessarily closed. 

\begin{definition} \label{def of phi embed}
The embedding $\iota \in C^{\infty}_{emb}(L, M)$ 
is called a {\bf $\Phi$-embedding} if 
\begin{align*}
\iota^{*} \Phi = (\iota^{*} \varphi_{1}, \cdots, \iota^{*} \varphi_{m}) = 0.
\end{align*}
Define the moduli space 
$\mathcal{M}_{L} (\Phi)$
of 
$\Phi$-embeddings of $L$ by
\begin{align*}
\mathcal{M}_{L} (\Phi)
=
\{ \iota \in C^{\infty}_{emb}(L, M); \iota^{*} \Phi =0 \}/ {\rm Diff}^{\infty}(L). 
\end{align*}
\end{definition}
We want to study the structure of $\mathcal{M}_{L} (\Phi)$. 

Fix $\iota \in C^{\infty}_{emb}(L, M)$ 
satisfying $\iota^{*} \Phi = 0$
and 
a vector bundle $\mathcal{N}_{\iota} \rightarrow L$ which is transversal to $\iota$. 
Set 
\begin{align*}
V_{1} = C^{\infty}(L, \mathcal{N}_{\iota}), \qquad
V_{2} = \oplus_{i=1}^{m} \Omega^{k_{i}} (L) = C^{\infty} (L, \oplus_{i=1}^{m} \wedge^{k_{i}} T^{*}L).
\end{align*}
By the tubular neighborhood theorem there exists a neighborhood
of $L$ in $M$ which is identified with 
an open neighborhood $\mathcal{U} \subset \mathcal{N}_{\iota}$ of the zero section by
the exponential map. 
Set
\begin{align*}
U = \{ v \in V_{1}; v_{x} \in \mathcal{U} \mbox{ for any } x \in L \}.
\end{align*}
The exponential map induces the embedding 
${\rm exp}_{v}: L \hookrightarrow M$
by ${\rm exp}_{v}(x) = {\rm exp}_{x} (v_{x})$ 
for $v \in U$ and $x \in L$. 
Define the first order differential operator $F: U \rightarrow V_{2}$ by
\begin{align*}
F(v) = 
{\rm exp}_{v}^{*} \Phi
=
({\rm exp}_{v}^{*} \varphi_{1}, \cdots, {\rm exp}_{v}^{*} \varphi_{m}).
\end{align*} 
Then 
${\rm exp}_{v}: L \hookrightarrow M$ is $\Phi$-embedding 
if and only if $F(v)=0$. 
Thus 
a neighborhood of $[\iota]$ in $\mathcal{M}_{L} (\Phi)$
is identified 
with that of $0$ in $F^{-1}(0)$ (in the $C^{1}$ sense). 
Let $D_{1}$ be the
linearization of $F$ at $0$:
\begin{align*}
D_{1} = (dF)_{0}: V_{1} \rightarrow V_{2}. 
\end{align*}

First, we prove the following, 
which is a slight generalization of \cite[Proposition 2.2]{Moriyama}. 
It will be useful to see 
whether the moduli space of submanifolds 
characterized by some differential forms 
is smooth.
We use the notion of a Fr\'{e}chet manifold given in \cite{Hamilton}.

\begin{prop} \label{general moduli}
Suppose that there exist a vector bundle $E \rightarrow L$ 
and a first order differential operator $D_{2}: V_{2} \rightarrow V_{3}$, 
where $V_{3} = C^{\infty}(L, E)$ is a space of 
smooth sections of $E \rightarrow L$, 
such that 
\begin{align*}
V_{1} \overset{D_{1}}{\longrightarrow} V_{2} \overset{D_{2}}{\longrightarrow} V_{3}
\end{align*}
is a differential complex. 
Namely, $D_{2} \circ D_{1} = 0$. 
Denote by $D_{i}^{*} : V_{i+1} \rightarrow V_{i}$
the formal adjoint operator of $D_{i}$. 

\begin{enumerate}
\item
Suppose that $P_{2} = D_{1} D_{1}^{*} + D_{2}^{*} D_{2}: V_{2} \rightarrow V_{2}$
is elliptic and
${\rm Im}(F) \subset {\rm Im}(D_{1})$. 
Then 
around $[\iota]$, 
the moduli space 
$\mathcal{M}_{L} (\Phi)$ is a smooth Fr\'{e}chet manifold 
and it is a submanifold of $\mathcal{M} (L, M)$.

\item
In addition to the assumptions of 1, 
suppose further that 
$P_{1} = D_{1}^{*} D_{1}: V_{1} \rightarrow V_{1}$ is elliptic. 
Then the moduli space $\mathcal{M}_{L} (\Phi)$ is 
a finite dimensional smooth manifold around $[\iota]$ 
and its dimension is equal to $\dim \ker (D_{1})$. 
\end{enumerate}
\end{prop}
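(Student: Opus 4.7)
The plan is to identify a neighborhood of $[\iota]$ in $\mathcal{M}_L(\Phi)$ with $F^{-1}(0)$ near $0$ (as already set up in the discussion preceding the proposition), reformulate the equation $F(v)=0$ so that its linearization becomes a projection, and then apply a Fr\'echet-space implicit function theorem. The core ingredients are Hodge theory for the elliptic operator $P_2$ and the hypothesis $\mathrm{Im}(F)\subset\mathrm{Im}(D_1)$.

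First I would use the ellipticity of $P_2=D_1D_1^{*}+D_2^{*}D_2$ on the compact manifold $L$ to obtain the Hodge decomposition
\[
V_2 \;=\; \mathcal{H}^1 \oplus \mathrm{Im}(D_1) \oplus \mathrm{Im}(D_2^{*}),
\]
with $\mathcal{H}^1=\ker P_2$ finite-dimensional, together with a Green operator $G_2:V_2\to V_2$ satisfying $D_1D_1^{*}G_2 = \pi_{\mathrm{Im}(D_1)}$. A consequence that does not require $P_1$ to be elliptic is the algebraic splitting $V_1=\ker D_1\oplus\mathrm{Im}(D_1^{*})$: the identity $u = (u-D_1^{*}G_2D_1u)+D_1^{*}G_2D_1u$ furnishes the decomposition (the first summand lies in $\ker D_1$ since $D_1D_1^{*}G_2(D_1u)=D_1u$), and $\ker D_1\cap\mathrm{Im}(D_1^{*})=0$ follows from $\|D_1^{*}\alpha\|^2=\langle D_1D_1^{*}\alpha,\alpha\rangle$. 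Next, define
\[
\tilde F:U\longrightarrow V_1,\qquad \tilde F(v)=D_1^{*}G_2F(v).
\]
Applying $D_1$ and using $F(v)\in\mathrm{Im}(D_1)$ yields $D_1\tilde F(v)=\pi_{\mathrm{Im}(D_1)}F(v)=F(v)$, so $\tilde F(v)=0$ if and only if $F(v)=0$; its linearization $(d\tilde F)_0=D_1^{*}G_2D_1$ is exactly the projection $V_1\to\mathrm{Im}(D_1^{*})$ along $\ker D_1$ extracted in the previous step.

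Write $v=v_0+v_1\in\ker D_1\oplus\mathrm{Im}(D_1^{*})$ and regard $\tilde F$ as taking values in $\mathrm{Im}(D_1^{*})$; its partial derivative $\partial_{v_1}\tilde F|_{(0,0)}$ is the identity on $\mathrm{Im}(D_1^{*})$. The Nash--Moser implicit function theorem for tame Fr\'echet maps (Hamilton) then solves $\tilde F(v_0+v_1)=0$ as $v_1=v_1(v_0)$ smoothly in $v_0$, exhibiting $F^{-1}(0)$ as a smooth Fr\'echet submanifold of $V_1$ with tangent space $\ker D_1$ at the origin. Passing to the quotient gives Part 1, with $\mathcal{M}_L(\Phi)$ a submanifold of $\mathcal{M}(L,M)$. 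For Part 2, the ellipticity of $P_1=D_1^{*}D_1$ yields $\ker D_1=\ker P_1$ (via $\langle P_1u,u\rangle=\|D_1u\|^2$ on compact $L$), so the model space $\ker D_1$ is finite-dimensional, whence $\mathcal{M}_L(\Phi)$ is locally a smooth manifold of dimension $\dim\ker D_1$.

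The main obstacle will be verifying the tame estimates required by Nash--Moser: one must check that $D_1$, $D_1^{*}$, and $G_2$ (hence $\tilde F$ and the obvious right inverse of $\partial_{v_1}\tilde F$) are tame between suitable Sobolev gradings on $V_1$ and $V_2$. Once these estimates are in place, the rest is bookkeeping with the Hodge decomposition of the first step; the hypothesis $\mathrm{Im}(F)\subset\mathrm{Im}(D_1)$ is precisely what makes the reformulation $\tilde F\equiv 0 \Leftrightarrow F\equiv 0$ work and thereby annihilates any obstruction coming from $\mathcal{H}^1\oplus\mathrm{Im}(D_2^{*})$.
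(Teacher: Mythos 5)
Your Hodge-theoretic setup is correct and is essentially the same as the paper's: the decomposition $V_2=\ker P_2\oplus\mathrm{Im}(D_1)\oplus\mathrm{Im}(D_2^{*})$, the induced splitting $V_1=\ker D_1\oplus\mathrm{Im}(D_1^{*})$, the right inverse $D_1^{*}G$ of $D_1$ on $\mathrm{Im}(D_1)$, and the use of $\mathrm{Im}(F)\subset\mathrm{Im}(D_1)$ to make $F(v)=0$ equivalent to an equation with surjective linearization are exactly the content of Lemma \ref{prepare for imp fn}. The divergence is the implicit function theorem you invoke, and that is where the gap sits. Nash--Moser does not follow from invertibility of $\partial_{v_1}\tilde F$ at the single point $0$: in the tame Fr\'echet category one must produce a \emph{smooth tame family} of inverses of the linearization at every point of a neighborhood, because invertibility is not an open condition in Fr\'echet spaces. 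Your closing paragraph proposes only to check tameness of $D_1$, $D_1^{*}$, $G_2$ and of ``the obvious right inverse'' at the origin; what is actually needed is that $(dF)_{v}|_{\mathrm{Im}(D_1^{*})}$ remains an isomorphism onto $\mathrm{Im}(D_1)$ with a controlled inverse for all small $v$, and this is the real analytic content of the proof rather than bookkeeping. The paper supplies it (Lemmas \ref{isom Ck} and \ref{smoothness G k,a}) by introducing the auxiliary operator $F_{\gamma}(\beta)=F(\gamma+D_1^{*}\beta)+D_2^{*}D_2\beta$, whose linearization at $(0,0)$ is the elliptic operator $P_2$, and then exploiting openness of ellipticity together with Schauder estimates.

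Note also that once this elliptic-regularity input is available, Nash--Moser is unnecessary: the paper runs the ordinary Banach implicit function theorem at each H\"older level $C^{k,a}$, then uses elliptic regularity to show that the solution map obtained at the lowest level preserves $C^{k,a}$ regularity for every $k$ (Lemma \ref{image Ck}) and is smooth at every level (Lemma \ref{smoothness G k,a}); Fr\'echet smoothness and the submanifold statement follow by intersecting over $k$. So your route could in principle be completed, but the step you defer is the heart of the matter, and completing it would reproduce the paper's elliptic estimates anyway. The parts of your argument that are sound as stated are the equivalence $\tilde F(v)=0\Leftrightarrow F(v)=0$, the splitting of $V_1$, and the reduction of Part 2 to the finite-dimensionality of $\ker D_1=\ker P_1$ via ellipticity of $P_1$.
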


\begin{proof}
Consider the case 1. 
First, we extend above spaces and operators to those of class $C^{k ,a}$, 
where $k \geq 1$ is an integer and $0 < a <1$. Set 
\begin{align*}
V_{1}^{k, a} &= C^{k, a} (L, \mathcal{N}_{\iota}), \qquad
V_{2}^{k, a} = C^{k, a} (L, \oplus_{i=1}^{m} \wedge^{k_{i}} T^{*}L), \\
U^{k, a} &= \{ v \in V_{1}; v_{x} \in \mathcal{U} \mbox{ for any } x \in L \},\\
F^{k, a}&: U^{k, a} \rightarrow V_{2}^{k-1, a}, \qquad
D_{1}^{k, a} = (dF^{k, a})_{0}, \\
\mathcal{M}_{L}^{k, a} (\Phi)
&=
\{ \iota \in C^{k, a}_{emb}(L, M); \iota^{*} \Phi =0 \}/ {\rm Diff}^{k, a}(L). 
\end{align*}
Similarly, 
a neighborhood of $[\iota]$ in $\mathcal{M}^{k, a}_{L} (\Phi)$
is identified 
with that of $0$ in $(F^{k, a})^{-1}(0)$ 
(in the $C^{1}$ sense).

We prove that 
$\mathcal{M}^{k, a}_{L} (\Phi)$ 
is smooth around $[\iota]$ 
in the sense of Banach. 
To apply the implicit function theorem, 
we prove the following. 

\begin{lem} \label{prepare for imp fn}
\begin{enumerate}[(a)]
\item ${\rm Im}(D_{1}^{k,a}) \subset V_{2}^{k-1, a}$ is a closed subspace.
\item ${\rm Im}(F^{k,a}) \subset {\rm Im}(D_{1}^{k,a})$.
\item $D_{1}^{k,a}: V_{1}^{k,a} \rightarrow {\rm Im}(D_{1}^{k,a})$ has a right inverse. 
\end{enumerate}
\end{lem}

\begin{proof}
By the Hodge decomposition, we have 
\begin{align} \label{Hodge decomp}
V_{2}^{k-1, a} = \ker P_{2} \oplus D_{1}^{k,a} (V_{1}^{k,a}) \oplus (D_{2}^{*})^{k,a}(V_{3}^{k,a}),
\end{align}
where $(D_{2}^{*})^{k,a}: V_{3}^{k,a} \rightarrow V_{2}^{k-1,a}$ is a canonical extension of $D_{2}^{*}$. 
This is a $L_{2}$-orthogonal decomposition 
and 
${\rm Im}(D_{1}^{k,a})$ is the orthogonal complement of 
$\ker P_{2} \oplus (D_{2}^{*})^{k,a}(V_{3}^{k,a})$. Thus we see (a).

We prove (b).
For any $f \in U^{k, a}$, there exists a sequence $\{ f_{n} \} \subset U$ 
such that $f_{n} \rightarrow f$. 
By 
$F (f_{n}) \in {\rm Im}(F) \subset {\rm Im}(D_{1}) \subset {\rm Im}(D_{1}^{k,a})$, 
$F^{k,a}(f) = \lim_{n \rightarrow \infty} F(f_{n})$ and (a), 
we see that 
$F^{k,a}(f) \in {\rm Im}(D_{1}^{k,a})$. 

We prove (c).
Let $G$ be the Green's operator of $P_{2}$. 
Then for any $f \in {\rm Im}(D_{1}^{k,a})$, we have 
$f = P_{2} G(f) = D_{1}^{k,a} (D_{1}^{*})^{k+1,a} G(f) + (D_{2}^{*})^{k,a} D_{2}^{k+1,a} G(f)$. 
By (\ref{Hodge decomp}), we deduce that 
\begin{align}\label{eq Im D1}
D_{1}^{k,a} (D_{1}^{*})^{k+1,a} G(f) = f, \qquad
(D_{2}^{*})^{k,a} D_{2}^{k+1,a} G(f) = 0.
\end{align}
Thus we see that 
$(D_{1}^{*})^{k+1,a} G|_{{\rm Im}(D_{1}^{k,a})} : {\rm Im}(D_{1}^{k,a}) \rightarrow V_{1}^{k,a}$ 
is a right inverse of $D_{1}^{k,a}: V_{1}^{k,a} \rightarrow {\rm Im}(D_{1}^{k,a})$.
\end{proof}

By Lemma \ref{prepare for imp fn} (a), 
we obtain the smooth map 
$F^{k,a}: U^{k,a} \rightarrow {\rm Im}(D_{1}^{k,a})$. 
The smoothness of this map is proved in \cite[Theorem 2.2.15]{Baier}. 
It is clear that 
$(d F^{k,a})_{0} = D_{1}^{k,a}: V_{1}^{k,a} \rightarrow {\rm Im}(D_{1}^{k,a})$
is surjective 
and $V_{1}^{k,a}$ is the direct sum of the kernel of $D_{1}^{k,a}$ 
and the image of the right inverse of $D_{1}^{k,a}: V_{1}^{k,a} \rightarrow {\rm Im}(D_{1}^{k,a})$. 
By the proof of Lemma \ref{prepare for imp fn} (c), we have 
\begin{align*}
V_{1}^{k,a} =  X_{1}^{k,a} \oplus Y_{1}^{k,a}, 
\end{align*}
where
$X_{1}^{k,a} = \ker (D_{1}^{k,a})$ and 
$Y_{1}^{k,a} = (D_{1}^{*})^{k+1,a} G {\rm Im}(D_{1}^{k,a}).$ 
Note that 
both spaces are closed in $V_{1}^{k,a}$.

Then we can apply the implicit function theorem. 
There exist 
an open neighborhood $A_{1}^{k,a} \subset X_{1}^{k,a}$ of $0$, 
an open neighborhood $B_{1}^{k,a} \subset Y_{1}^{k,a}$ of $0$, 
and a smooth mapping 
$\hat{G}^{k,a}: A_{1}^{k,a} \rightarrow B_{1}^{k,a}$ such that 
\begin{align*}
(F^{k,a})^{-1}(0) \cap (A_{1}^{k,a} \oplus B_{1}^{k,a}) 
= 
\{ x + \hat{G}^{k,a}(x);  x \in A_{1}^{k,a} \},
\end{align*}
which implies that $\mathcal{M}_{L}^{k,a} (\Phi)$ is smooth around $[\iota]$
in the sense of Banach.

Next, we prove that 
$\mathcal{M}_{L} (\Phi)$ 
is smooth 
around $[\iota]$ in the sense of Fr\'{e}chet. 
The proof is an analogue of that of 
\cite[Theorem 4.1]{Opozda}. 
The open set $A_{1}^{k,a}$ 
and the map $\hat{G}^{k,a}$
depend on $k$ and $a$.
We have to show that 
we can take $A_{1}^{k,a}$ and $\hat{G}^{k,a}$
``uniformly". 
Namely, set 
\begin{align*}
G^{k,a} = \hat{G}^{1,a}|_{A_{1}^{1,a} \cap V_{1}^{k,a}}: 
A_{1}^{1,a} \cap V_{1}^{k,a} \rightarrow B_{1}^{1,a}.
\end{align*}

In the following, 
by shrinking $A_{1}^{1,a}$ if necessary, 
we prove that for any $k \geq 1$
\begin{itemize}
\item ${\rm Im}(G^{k,a}) \subset Y_{1}^{k,a} = Y_{1}^{1,a} \cap V_{1}^{k,a}$, 
\item and $G^{k,a}: A_{1}^{1,a} \cap V_{1}^{k,a} \rightarrow Y_{1}^{k,a}$ is smooth in the sense of Banach. 
\end{itemize}
Then we see that 
${\rm Im}(\hat{G}^{1,a}|_{A_{1}^{1,a} \cap V_{1}}) \subset Y_{1}^{1,a} \cap V_{1}$ 
and 
$\hat{G}^{1,a}|_{A_{1}^{1,a} \cap V_{1}}$ 
is smooth in the sense of Fr\'{e}chet. 
Hence we see that $\mathcal{M}_{L} (\Phi)$ 
is smooth around $[\iota]$.

First,  we show that 
${\rm Im} (G^{k,a}) \subset Y_{1}^{k,a}$ 
by the elliptic regularity theorem. 
For any $\gamma \in V_{1}$, define 
the second order differential operator $F_{\gamma}: V_{2} \rightarrow V_{2}$ by 
\begin{align*}
F_{\gamma} (\beta) = F(\gamma + D_{1}^{*} \beta) + D_{2}^{*} D_{2} \beta.
\end{align*}
Denote by 
$F_{\gamma}^{1,a}$ 
the extension of $F_{\gamma}$ on $V_{2}^{1,a}$.

Since the linearization of $F_{0}$ at $0$, 
which is given by 
$(dF_{0})_{0} = D_{1} D_{1}^{*} + D_{2}^{*} D_{2} = P_{2}$, is elliptic
and 
the ellipticity is an open condition, we see that 
there exist 
an open neighborhood $\mathcal{U}_{0} \subset V_{1}^{1,a}$ of $0$ 
and an open neighborhood $\mathcal{V}_{0} \subset V_{2}^{2,a}$ of $0$ 
such that 
$(dF^{1,a}_{\gamma})_{\beta}$ is elliptic 
for any $(\gamma, \beta) \in \mathcal{U}_{0} \times \mathcal{V}_{0}.$
Set 
\begin{align*}
\mathcal{U}_{1} = (G^{1,a})^{-1}( (D_{1}^{*})^{2,a} (\mathcal{V}_{0} \cap G ({\rm Im}(D_{1}^{1,a})))
\cap B_{1}^{1,a}) \cap \mathcal{U}_{0}, 
\end{align*}
which is an open subset of $A_{1}^{1,a}$ because 
\begin{align*}
(D_{1}^{*})^{2,a}|_{G ({\rm Im} (D_{1}^{1,a}))}: G ({\rm Im} (D_{1}^{1,a})) \rightarrow 
(D_{1}^{*})^{2,a} G ({\rm Im} (D_{1}^{1,a})) = Y_{1}^{1,a}
\end{align*}
is an isomorphism.

\begin{lem} \label{image Ck}
For any $k \geq 1$, we have 
\begin{align*}
G^{1,a} (\mathcal{U}_{1} \cap V_{1}^{k,a}) \subset Y_{1}^{k,a}.
\end{align*}
\end{lem}

\begin{proof}
Let $\alpha \in \mathcal{U}_{1} \cap V_{1}^{k,a}$. 
Since $F$ is the first order differential operator, 
the differential operator $F_{\alpha}$ is of class $C^{k-1,a}$.
By the definition of $\mathcal{U}_{1}$, 
there exists $\beta \in \mathcal{V}_{0} \cap G ({\rm Im}(D_{1}^{1,a}))$ 
satisfying 
$G^{1,a}(\alpha) = (D_{1}^{*})^{2,a} (\beta)$. Then
\begin{align*}
F^{1,a}_{\alpha} (\beta) = F^{1,a} (\alpha + (D_{1}^{*})^{2,a} \beta) + 
(D_{2}^{*})^{1,a} D_{2}^{2,a} \beta = 0
\end{align*}
by the definition of $G^{1,a}$ and (\ref{eq Im D1}). 
Since $(\alpha, \beta) \in \mathcal{U}_{0} \times \mathcal{V}_{0}$, 
$(dF^{1,a}_{\alpha})_{\beta}$ is elliptic.
Hence Schauder theory implies that 
$\beta$ is of class $C^{k+1, a}$. 
Thus  
$G^{1,a}(\alpha) = (D_{1}^{*})^{2,a} (\beta)$ is of class $C^{k,a}$. 
\end{proof}

Next, we show that $G^{k,a}$ is a smooth map. 
Since 
$(d F^{1,a})_{0}|_{Y_{1}^{1,a}} = D_{1}^{1,a}|_{Y_{1}^{1,a}} : Y_{1}^{1,a} 
\rightarrow Z_{2}^{0,a} = D_{1}^{1,a} (V_{1}^{1,a})$
is an isomorphism 
and being an isomorphism is an open condition, 
there is an open neighborhood $\mathcal{U}_{2} \subset V_{1}^{1,a}$ of $0$ such that 
$
(d F^{1,a})_{\gamma}|_{Y_{1}^{1,a}} 
:Y_{1}^{1,a} \rightarrow Z_{2}^{0,a}
$
is an isomorphism for any $\gamma \in \mathcal{U}_{2}$.
Set $\mathcal{U}_{3} = \mathcal{U}_{2} \cap \mathcal{U}_{0}$.

\begin{lem} \label{isom Ck}
For any $\gamma \in \mathcal{U}_{3} \cap V_{1}^{k,a}$, 
\begin{align*}
(d F^{1,a})_{\gamma}|_{Y_{1}^{k,a}} 
:
Y_{1}^{1,a} \cap V_{1}^{k,a} = 
Y_{1}^{k,a} \rightarrow 
D_{1}^{k,a} (V_{1}^{k,a}) = Z_{2}^{0,a} \cap V_{2}^{k-1,a}
\end{align*}
is an isomorphism. 
\end{lem}

\begin{proof}
The injectivity of $(d F^{1,a})_{\gamma}|_{Y_{1}^{k,a}}$ 
follows from the fact that 
$(d F^{1,a})_{\gamma}|_{Y_{1}^{k,a}}$ 
is a restriction of 
the isomorphism 
$(d F^{1,a})_{\gamma}|_{Y_{1}^{1,a}} 
:Y_{1}^{1,a} \rightarrow Z_{2}^{0,a}$. 
The equation 
$(d F^{1,a})_{\gamma}|_{V_{1}^{k,a}} = (d F^{k,a})_{\gamma}$ 
and the smoothness of 
$F^{k,a}: V_{1}^{k,a} \rightarrow D_{1}^{k,a} (V_{1}^{k,a})$ 
imply that 
$(d F^{1,a})_{\gamma}|_{Y_{1}^{k,a}}$ is continuous. 

We prove that 
$(d F^{1,a})_{\gamma}|_{Y_{1}^{k,a}}$ is surjective. 
Take any $\mu \in D_{1}^{k,a} (V_{1}^{k,a})$. 
Since 
$(d F^{1,a})_{\gamma}|_{Y_{1}^{1,a}} 
:Y_{1}^{1,a} \rightarrow Z_{2}^{0,a}$
is an isomorphism, 
there exists $\beta \in G({\rm Im} (D_{1}^{1,a})) \subset V_{2}^{2,a}$ 
satisfying 
$(d F^{1,a})_{\gamma} ((D_{1}^{*})^{2,a} \beta) = \mu$.
Now we have 
\begin{align*}
(d F^{1,a})_{\gamma} ((D_{1}^{*})^{2,a} \beta)
=&
\left.
\frac{d}{dt} F^{1,a} (\gamma + t (D_{1}^{*})^{2,a} \beta ) \right|_{t=0}\\
=&
\left.
\frac{d}{dt} \left( F^{1,a}(\gamma + t (D_{1}^{*})^{2,a} \beta) 
+ (D_{2}^{*})^{1,a} D_{2}^{2,a} (t \beta)  
\right) \right|_{t=0}\\
=&
(d F^{1,a}_{\gamma})_{0} (\beta)
\end{align*}
by (\ref{eq Im D1}). 
Since $\gamma \in \mathcal{U}_{0} \cap V_{1}^{k,a}$, 
the differential operator $(d F^{1,a}_{\gamma})_{0}$ is 
the elliptic operator of class $C^{k-1, a}$. 
Hence by Schauder theory, 
$\beta$ is of class $C^{k+1,a}$, 
which implies that 
$\mu = (d F^{1,a})_{\gamma} ((D_{1}^{*})^{2,a} \beta) \in (d F^{1,a})_{\gamma} (Y_{1}^{k,a})$.
\end{proof}

Define the map $\tilde{G}^{1,a}: A_{1}^{1,a} \rightarrow V_{1}^{1,a} = X_{1}^{1,a} \oplus Y_{1}^{1,a}$ 
by $\tilde{G}^{1,a}(\alpha) = \alpha + G^{1,a} (\alpha)$. Set
\begin{align*}
\mathcal{U}_{4} = (\tilde{G}^{1,a})^{-1}(\mathcal{U}_{3}) \cap \mathcal{U}_{1}, 
\end{align*}
which is an open set of $A_{1}^{1,a}$.

\begin{lem} \label{smoothness G k,a}
For any $k \geq 1$, 
$G^{k,a} |_{\mathcal{U}_{4} \cap V_{1}^{k,a}}: \mathcal{U}_{4} \cap V_{1}^{k,a} 
\rightarrow Y_{1}^{k,a}$ 
is smooth. 
\end{lem}

\begin{proof}
We only have to prove that 
$G^{k,a}$ is smooth around any $\alpha_{0} \in \mathcal{U}_{4} \cap V_{1}^{k,a}$. 
Set $\gamma_{0} = \tilde{G}^{1,a}(\alpha_{0}) = \alpha_{0} + G^{1,a} (\alpha_{0})$. 
By Lemma \ref{image Ck}, $\gamma_{0} \in \mathcal{U}_{3} \cap V_{1}^{k,a}$. 
By Lemma \ref{isom Ck}, 
$
(d F^{1,a})_{\gamma_{0}}|_{Y_{1}^{k,a}} 
:
Y_{1}^{k,a} \rightarrow 
D_{1}^{k,a} (V_{1}^{k,a})
$
is an isomorphism. 
Set $\tilde{X}_{1}^{k,a} = \ker (d F^{1,a})_{\gamma_{0}} \cap V_{1}^{k,a}$. 
Then we have 
\begin{align*}
F^{1,a}(\gamma_{0}) =0, \qquad
V_{1}^{k,a} = \tilde{X}_{1}^{k,a} \oplus Y_{1}^{k,a}.
\end{align*}
Let $\tilde{\pi}: 
V_{1}^{k,a} = \tilde{X}_{1}^{k,a} \oplus Y_{1}^{k,a} \rightarrow \tilde{X}_{1}^{k,a}$ 
be the canonical projection 
and set $\tilde{\alpha}_{0} = \tilde{\pi} (\alpha_{0}).$
This is a smooth mapping between Banach spaces. 
Applying the implicit function theorem 
to $F^{k,a} = F^{1,a}|_{V_{1}^{k,a}}: V_{1}^{k,a} \rightarrow D_{1}^{k,a} (V_{1}^{k,a})$, 
there exist 
an open neighborhood $\tilde{U}_{1}^{k,a} \subset \tilde{X}_{1}^{k,a}$ of $\tilde{\alpha}_{0}$, 
an open set $\tilde{V}_{1}^{k,a} \subset Y_{1}^{k,a}$ 
and a smooth map 
$H^{k,a}: \tilde{U}_{1}^{k,a} \rightarrow \tilde{V}_{1}^{k,a}$ 
such that 
\begin{align*}
(F^{k,a})^{-1}(0) \cap (\tilde{U}_{1}^{k,a} \oplus \tilde{V}_{1}^{k,a})
=
\{ \tilde{\alpha} + H^{k,a}(\tilde{\alpha}); \tilde{\alpha} \in \tilde{U}_{1}^{k,a} \}.
\end{align*}
Now recall that 
for any 
$\alpha \in A_{1}^{1,a} \cap (\tilde{U}_{1}^{k,a} \oplus \tilde{V}_{1}^{k,a})$, 
we have 
$F^{1, a}(\alpha + G^{1, a}(\alpha)) = 0$ 
and 
$G^{k, a}(\alpha) = G^{1, a}(\alpha) \in Y_{1}^{k, a}$ by Lemma \ref{image Ck}. 
Then there exists $\tilde{\alpha} \in \tilde{U}_{1}^{k,a}$
satisfying $\alpha + G^{k, a}(\alpha) = \tilde{\alpha} + H^{k,a}(\tilde{\alpha})$. 
Taking $\tilde{\pi}$ of both sides, we obtain 
$\tilde{\pi}(\alpha) = \tilde{\alpha}$, which implies that 
\begin{align*}
G^{k,a}(\alpha) = \tilde{\pi} (\alpha) + H^{k,a} (\tilde{\pi}(\alpha)) - \alpha.
\end{align*}
Thus 
$G^{k,a}|_{\tilde{U}_{1}^{k,a} \oplus \tilde{V}_{1}^{k,a}}$ is smooth. 
\end{proof}

By Lemma \ref{smoothness G k,a}, 
it follows that 
$\hat{G}^{1,a} |_{\mathcal{U}_{4} \cap V_{1}}$ is smooth 
in the sense of Fr\'{e}chet, 
which implies that 
$\mathcal{M}_{L} (\Phi)$ is smooth around $[\iota]$.


Next, we prove that $\mathcal{M}_{L} (\Phi)$ is a submanifold of $\mathcal{M}(L,M)$ 
around $[\iota]$.
Set $\mathfrak{U} = (\mathcal{U}_{4} \cap V_{1}) \oplus Y_{1}$, 
where $Y_{1} = Y_{1}^{1,a} \cap V_{1}$.
Setting $X_{1} = X_{1}^{1,a} \cap V_{1}$, 
we have $V_{1} = X_{1}  \oplus Y_{1}$.
Let $p: V_{1} = X_{1}  \oplus Y_{1} \rightarrow X_{1}$
be the canonical projection.
Define the map $\psi: \mathfrak{U} \rightarrow \mathfrak{U}$ by 
$\psi (z) = z - \hat{G}^{1,a} \circ p (z)$.
By Lemma \ref{image Ck}, the image of $\psi$ is contained in $\mathfrak{U}$. 
This is bijective 
and the inverse $\psi^{-1}$ is given by 
$\psi^{-1}(z) = z + \hat{G}^{1,a} \circ p (z)$. 
Both mappings are smooth in the sense of Fr\'{e}chet. 
It is clear that 
\begin{align*}
\psi \left(\{ \alpha + \hat{G}^{1,a} (\alpha); \alpha \in \mathcal{U}_{4} \cap X_{1} \} \right)
=
\mathcal{U}_{4} \cap X_{1}
\end{align*}
since 
$\psi (\alpha + \hat{G}^{1,a} (\alpha)) = \alpha + \hat{G}^{1,a} (\alpha) - \hat{G}^{1,a} (\alpha) = \alpha$. 
Thus $\mathcal{M}_{L}(\Phi)$ is locally identified 
with the closed subspace 
$X_{1}$ of $V_{1}$, 
which implies that the
$\mathcal{M}_{L} (\Phi)$ is a submanifold of $\mathcal{M}(L,M)$ around $[\iota]$.

Finally, we prove the case 2. 
Since $P_{1} = D_{1}^{*} D_{1}$ is elliptic, 
we have $\dim \ker (D_{1}) \leq \dim \ker P_{1} < \infty$. 
Then we see the statement from the case 1. 
\end{proof}

Since the affine Legendrian condition is an open condition, 
we see as in the proof of \cite[Theorem 3.4]{Opozda} that 
the moduli space of affine Legendrian submanifolds, 
namely,
$\{ [\iota] \in \mathcal{M}(L,M); \iota \mbox{ is affine Legendrian} \}$, 
is a smooth Fr\'{e}chet manifold 
and it is open in $\mathcal{M}(L,M)$. 
Applying Proposition \ref{general moduli}, 
we prove Theorem \ref{smooth moduli affine Leg}.

\begin{proof}[Proof of Theorem \ref{smooth moduli affine Leg}]
Use the notation after Definition \ref{def of phi embed}. 
The moduli space of special affine Legendrian embeddings of $L$ 
is given by $\mathcal{M}_{L}({\rm Im} \psi).$
Fix any $[\iota] \in \mathcal{M}_{L}({\rm Im} \psi).$
Set 
$\mathcal{N}_{\iota} = \phi \iota_{*} TL \oplus \mathbb{R} \xi \circ \iota$. 
Define the map $F: U \rightarrow C^{\infty}(L)$ by
\begin{align*}
F(v) = * (\exp_{v}^{*} ({\rm Im} \psi)),
\end{align*}
where $*$ is the Hodge star operator of $\iota^{*}g$. 
Then the linearization $(dF)_{0}$ of $F$ at $0$ is given by
\begin{align*} 
(dF)_{0}(v) = * \iota^{*} L_{v} {\rm Im} \psi
=
* (\iota^{*} ( i(v)d {\rm Im} \psi + d i(v)  {\rm Im} \psi )).
\end{align*}
By \cite[Proposition 3.2]{Moriyama}, we have 
$d \psi = - (n+1) i \eta \wedge \psi$. 
Since $\iota$ is special affine Legendrian, we have $\iota^{*} {\rm Re}(\psi) = {\rm vol}_{\phi}[\iota]$. 
Then we compute 
\begin{align*}
\iota^{*} ( i(v)d {\rm Im} \psi
=
(n+1) (- \eta(v) {\rm vol}_{\phi}[\iota] 
+ \iota^{*} (\eta \wedge i(v) {\rm Re} \psi)).
\end{align*}
Denoting $v = \phi \iota_{*} Y + f \xi$ where $Y \in \mathfrak{X}(L), f \in C^{\infty}(L)$, we have 
\begin{align*}
i(v) \psi
&=
i (\phi \iota_{*} Y) i \left(r \frac{\partial}{\partial r} \right) \Omega |_{r=1}\\
&=
i (J \iota_{*} Y) i \left(r \frac{\partial}{\partial r} \right) \Omega |_{r=1}\\
&=
i \cdot i (\iota_{*} Y) i \left(r \frac{\partial}{\partial r} \right) \Omega |_{r=1}
=
i \cdot i (\iota_{*} Y) \psi.
\end{align*}
which implies that 
\begin{align*}
\iota^{*} (i(v) {\rm Re} \psi) = 0, \qquad
\iota^{*} d (i(v) {\rm Im} \psi) = d (i(Y) {\rm vol}_{\phi}[\iota]).
\end{align*}
Then we obtain 
\begin{align*}
D_{1}(v) = (dF)_{0}(v) 
&= * (-(n+1)* (\rho_{\phi}[\iota] f) + d * (\iota^{*} g (\rho_{\phi}[\iota]Y, \cdot)))\\
&=
-(n+1) \rho_{\phi}[\iota] f - d^{*} (\iota^{*} g (\rho_{\phi}[\iota]Y, \cdot)).
\end{align*}
Via the identification 
\begin{align*}
\begin{array}{cccc}
C^{\infty}(L, \mathcal{N}_{\iota}) =&
C^{\infty}(L, \phi \iota_{*} TL \oplus \mathbb{R} \xi \circ \iota) & \longrightarrow & 
C^{\infty}(L) \oplus \Omega^{1}(L) \\
&\rotatebox{90}{$\in$} & & \rotatebox{90}{$\in$} \\
&\phi \iota_{*}Y + f \xi & \longmapsto & (\rho_{\phi}[\iota]f, \iota^{*} g (\rho_{\phi}[\iota]Y, \cdot)), 
\end{array}
\end{align*}
the map $D_{1}$ is given by
\begin{align*}
C^{\infty}(L) \oplus \Omega^{1}(L) \ni (g, \alpha) \mapsto 
-(n+1) g - d^{*} \alpha 
\in C^{\infty}(L).
\end{align*}
Since $D_{1}^{*}(h) = (-(n+1)h, -dh)$, we see that 
\begin{align*}
D_{1} D_{1}^{*}(h) = (n+1)^{2} h + d^{*} d h,
\end{align*} 
which is clearly elliptic.
We easily see that ${\rm Im}(F) \subset {\rm Im}(D_{1})$ since ${\rm Im}(D_{1}) = C^{\infty}(L)$. 
Then 
setting $D_{2}=0$ and $V_{3} = \{ 0 \}$, we can apply Proposition \ref{general moduli} 
to see that   
the moduli space 
of special affine Legendrian embeddings of $L$ 
is an infinite dimensional smooth Fr\'{e}chet manifold 
modeled on the Fr\'{e}chet vector space 
$\{ (g, \alpha) \in C^{\infty}(L) \oplus \Omega^{1}(L); (n+1)g + d^{*} \alpha = 0 \} \cong \Omega^{1}(L)$. 
Note that 
we have 
$\Omega^{1}(L) = \{ \alpha \in \Omega^{1}(L); d^{*} \alpha = 0 \} \oplus d C^{\infty}(L)$ 
by the Hodge decomposition 
and $d C^{\infty}(L) = C^{\infty}(L) / \mathbb{R}$ is identified with 
the space of functions with integral $0$.

Since the moduli space of affine Legendrian submanifolds is open in $\mathcal{M}(L,M)$ 
and special affine Legendrian submanifolds are affine Legendrian, 
the proof is done.
\end{proof}

\begin{rem}
Applying Proposition \ref{general moduli} to the affine Lagrangian case, 
we can also deduce \cite[Theorem 1.1]{Opozda}.
\end{rem}


\begin{thebibliography}{99}
\bibitem{Baier}
P. D. Baier, Special Lagrangian Geometry, PhD thesis, University of Oxford, (2001).

\bibitem{Borrelli}
V. Borrelli, Maslov form and $J$-volume of totally real immersions, J. Geom.
Phys. 25 (1998), 271-290.

\bibitem{BoyerGalicki}
C. P. Boyer and K. Galicki, Sasakian geometry, Oxford Mathematical
Monographs, Oxford University Press, (2008). 

\bibitem{Chen}
B.-Y. Chen, Geometry of submanifolds and its applications, Science University of Tokyo, (1980).

\bibitem{Hamilton}
R. Hamilton, The inverse function theorem of Nash and Moser, Bull. Amer. Math. Soc. 7 (1982), 65-222.

\bibitem{HarveyLawson}
R. Harvey and H. B. Lawson, Calibrated geometries, Acta Math. 148 (1982), 47-157.

\bibitem{Kajigaya}
T. Kajigaya, Second variation formula and the stability of Legendrian minimal submanifolds in Sasakian manifolds,  Tohoku Math. J. (2) 65 (2013), 523-543.

\bibitem{LotayPacini}
J. D. Lotay and T. Pacini, Coupled flows, convexity and calibrations: Lagrangian and totally real geometry,  arXiv:1404.4227.

\bibitem{Moriyama}
T. Moriyama, Deformations of special Legendrian submanifolds in Sasaki-Einstein manifolds, 
arXiv:1306.2764. 

\bibitem{Ono}
H. Ono, Second variation and Legendrian stabilities of minimal Legendrian submanifolds in Sasakian manifolds, Differential Geom. Appl. 22 (2005), 327-340. 

\bibitem{Opozda}
B. Opozda, A moduli space of minimal affine Lagrangian submanifolds, 
Ann. Global Anal. Geom. 41 (2012), 535-547. 
\end{thebibliography}
\end{document}